\numberwithin{equation}{section}
\numberwithin{figure}{section}
\theoremstyle{plain}
\newtheorem{thm}{\protect\theoremname}[section]
\theoremstyle{plain}
\newtheorem{cor}[thm]{\protect\corollaryname}
\theoremstyle{plain}
\newtheorem{lem}[thm]{\protect\lemmaname}
\theoremstyle{plain}
\newtheorem{prop}[thm]{\protect\propositionname}
\theoremstyle{definition}
\newtheorem{defn}[thm]{\protect\definitionname}
\theoremstyle{remark}
\newtheorem{rem}[thm]{\protect\remarkname}
\providecommand{\corollaryname}{Corollary}
\providecommand{\definitionname}{Definition}
\providecommand{\lemmaname}{Lemma}
\providecommand{\propositionname}{Proposition}
\providecommand{\remarkname}{Remark}
\providecommand{\theoremname}{Theorem}
\begin{document}
\global\long\def\F{\mathrm{\mathbf{F}} }%
\global\long\def\Aut{\mathrm{Aut}}%
\global\long\def\C{\mathbf{C}}%
\global\long\def\H{\mathcal{H}}%
\global\long\def\U{\mathsf{U}}%
\global\long\def\ext{\mathrm{ext}}%
\global\long\def\hull{\mathrm{hull}}%
\global\long\def\triv{\mathrm{triv}}%
\global\long\def\Hom{\mathrm{Hom}}%

\global\long\def\trace{\mathrm{tr}}%
\global\long\def\End{\mathrm{End}}%

\global\long\def\L{\mathcal{L}}%
\global\long\def\W{\mathcal{W}}%
\global\long\def\E{\mathbb{E}}%
\global\long\def\SL{\mathrm{SL}}%
\global\long\def\R{\mathbf{R}}%
\global\long\def\Pairs{\mathrm{PowerPairs}}%
\global\long\def\Z{\mathbf{Z}}%
\global\long\def\rs{\to}%
\global\long\def\A{\mathcal{A}}%
\global\long\def\a{\mathbf{a}}%

\global\long\def\b{\mathbf{b}}%
\global\long\def\df{\mathrm{def}}%
\global\long\def\eqdf{\stackrel{\df}{=}}%
\global\long\def\ZZ{\overline{Z}}%
\global\long\def\Tr{\mathrm{Tr}}%
\global\long\def\N{\mathbf{N}}%
\global\long\def\std{\mathrm{std}}%
\global\long\def\HS{\mathrm{H.S.}}%
\global\long\def\spec{\mathrm{spec}}%
\global\long\def\Ind{\mathrm{Ind}}%
\global\long\def\half{\frac{1}{2}}%
\global\long\def\Re{\mathrm{Re}}%
\global\long\def\Im{\mathrm{Im}}%
\global\long\def\Rect{\mathrm{Rect}}%
\global\long\def\Crit{\mathrm{Crit}}%
\global\long\def\Stab{\mathrm{Stab}}%
\global\long\def\SL{\mathrm{SL}}%
\global\long\def\Tab{\mathrm{Tab}}%
\global\long\def\Cont{\mathrm{Cont}}%
\global\long\def\I{\mathcal{I}}%
\global\long\def\J{\mathcal{J}}%
\global\long\def\short{\mathrm{short}}%
\global\long\def\Id{\mathrm{Id}}%
\global\long\def\B{\mathcal{B}}%
\global\long\def\ax{\mathrm{ax}}%
\global\long\def\cox{\mathrm{cox}}%
\global\long\def\row{\mathrm{row}}%
\global\long\def\col{\mathrm{col}}%
\global\long\def\X{\mathbb{X}}%

\global\long\def\V{\mathcal{V}}%
\global\long\def\P{\mathbb{P}}%
\global\long\def\Fill{\mathsf{Fill}}%
\global\long\def\fix{\mathsf{fix}}%
\global\long\def\reg{\mathrm{reg}}%
\global\long\def\edge{E}%
\global\long\def\id{\mathrm{id}}%
\global\long\def\emb{\mathrm{emb}}%

\global\long\def\Hom{\mathrm{Hom}}%
 
\global\long\def\F{\mathrm{\mathbf{F}} }%
  
\global\long\def\pr{\mathrm{Prob} }%
 
\global\long\def\tr{\mathrm{Tr} }%
\global\long\def\gs{\mathsf{GS}}%
 
\global\long\def\Xcov{{\scriptscriptstyle \overset{\twoheadrightarrow}{\mathcal{\gs}}}}%
 
\global\long\def\covers{\leq_{\Xcov}}%
\global\long\def\core{\mathrm{Core}}%
\global\long\def\pcore{\mathrm{PCore}}%
\global\long\def\im{\mathrm{imm}}%
\global\long\def\br{\mathsf{BR}}%
\global\long\def\ebs{\mathsf{EBS}}%
\global\long\def\ev{\mathrm{ev}}%
\global\long\def\CC{\mathcal{C}}%
\global\long\def\sides{\mathrm{Sides}}%
\global\long\def\tp{\mathrm{top}}%
\global\long\def\lf{\mathrm{left}}%
\global\long\def\MCG{\mathrm{MCG}}%

\global\long\def\defect{\mathrm{Defect}}%
\global\long\def\M{\mathcal{M}}%
\global\long\def\O{\mathcal{O}}%
\global\long\def\EE{\mathcal{E}}%
\global\long\def\sign{\mathrm{sign}}%

\global\long\def\v{\mathfrak{v}}%
\global\long\def\e{\mathfrak{e}}%
\global\long\def\f{\mathfrak{f}}%
\global\long\def\D{\mathfrak{D}}%
\global\long\def\d{\mathfrak{d}}%
\global\long\def\he{\mathfrak{he}}%
\global\long\def\OVB{\mathsf{OvB}}%
\global\long\def\G{\mathcal{G}}%

\global\long\def\SU{\mathsf{SU}}%
\global\long\def\p{\mathfrak{p}}%
\global\long\def\NN{\mathcal{N}}%
\global\long\def\hN{\hat{\mathcal{N}}}%
\global\long\def\Wg{\mathrm{Wg}}%
\global\long\def\Res{\mathrm{Res}}%
\global\long\def\ABG{\mathrm{ABG}}%
\global\long\def\zd{\mathrm{zd}}%
\global\long\def\SSTab{\mathcal{SST}}%
\global\long\def\FF{\mathcal{F}}%
\global\long\def\q{\mathfrak{q}}%
\global\long\def\PSU{\mathsf{PSU}}%
\global\long\def\irr{\mathrm{irr}}%
\global\long\def\su{\mathfrak{su}}%
\global\long\def\Ad{\mathrm{Ad}}%
\global\long\def\m{\mathbf{m}}%
\global\long\def\x{\mathbf{x}}%
\global\long\def\LR{\mathsf{LR}}%
\global\long\def\Q{\mathbf{Q}}%
\global\long\def\Z{\mathbf{Z}}%
\global\long\def\T{\mathcal{T}}%
\global\long\def\tkld{\dot{\T}_{n}^{k,\ell}}%
\global\long\def\tkl{\T_{n}^{k,\ell}}%
\global\long\def\match{\mathsf{MATCH}}%
\global\long\def\surfaces{\mathsf{surfaces}}%
\global\long\def\tkldc{\check{\dot{\mathcal{T}}}_{n}^{k,\ell}}%
\global\long\def\tklc{\check{\mathcal{T}}_{n}^{k,\ell}}%
\global\long\def\II{\mathfrak{I}}%
\global\long\def\j{\mathfrak{j}}%

\global\long\def\u{\mathbf{u}}%
\global\long\def\v{\mathbf{v}}%
\global\long\def\r{\mathbf{r}}%
\global\long\def\s{\mathbf{s}}%
\global\long\def\UU{\mathbf{U}}%
\global\long\def\V{\mathbf{V}}%
\global\long\def\RR{\mathbf{\mathbf{R}}}%
\global\long\def\SS{\mathbf{S}}%

\title[Random Unitary Representations of Surface Groups II]{Random Unitary Representations
 of Surface Groups II: \\ The large $n$ limit}
\author{Michael Magee}
\givenname{Michael}
\surname{Magee}
\address{Department of Mathematical Sciences, Durham University, Lower Mountjoy, DH1 3LE Durham, United
Kingdom}
\email{michael.r.magee@durham.ac.uk}
\urladdr{https://www.mmagee.net/}

\begin{abstract}
Let $\Sigma_{g}$ be a closed surface of genus $g\geq2$ and $\Gamma_{g}$
denote the fundamental group of $\Sigma_{g}$. We establish a generalization
of Voiculescu's theorem on the asymptotic $*$-freeness of Haar unitary
matrices from free groups to $\Gamma_{g}$. We prove that for a random
representation of $\Gamma_{g}$ into $\SU(n)$, with law given by
the volume form arising from the Atiyah-Bott-Goldman symplectic form
on moduli space, the expected value of the trace of a fixed non-identity
element of $\Gamma_{g}$ is bounded as $n\to\infty$. The proof involves
an interplay between Dehn's work on the word problem in $\Gamma_{g}$
and classical invariant theory.
\end{abstract}

\maketitle
\begin{center}
\emph{To Oona}
\par\end{center}

\keyword{}
\subject{primary}{msc2010}{14H60, 20C30, 20C35, 22D10, 32G15, 46L54, 57M20, 70S15}
\subject{secondary}{msc2010}{}

%  Fill in the reference number if your article is stored on the arXiv
%  eg \arxivreference{math.GT/0512347} or \arxivreference{1203.4984}.
%  The newer style reference numbers (with a period) do not require the
%  prefix arxiv: or math.NT/ or anything else. Just the reference
%  number is sufficient.

\arxivreference{2101.03224}

\tableofcontents{}

\section{Introduction}

In a foundational series of papers \cite{Voiculescu1985,Voiculescu86,Voiculescu87,Voiculescu90,Voiculescu1991},
Voiculescu developed a robust theory of non-commuting random variables
that became known as \emph{free probability}. One of the initial landmarks
of this theory is the following result. Let $\F_{r}$ denote the non-commutative
free group of rank $r$. Let $\U(n)$ denote the group of $n\times n$
complex unitary matrices. For any $w\in\F_{r}$ we obtain a \emph{word
map $w:\U(n)^{r}\to\U(n)$ }by substituting matrices for generators
of $\F_{r}$. Let $\mu_{\U(n)^{r}}^{\mathrm{Haar}}$ denote the probability
Haar measure on $\U(n)^{r}$ and $\tr:\U(n)\to\C$ the standard trace.
Any integral over a compact group will be done with respect to the
probability Haar measure, denoted by $d\mu$.

A simplified version of Voiculescu's result \cite[Thm. 3.8]{Voiculescu1991}
can be formulated as follows\footnote{Voiculescu's result in \cite[Thm. 3.8]{Voiculescu1991} is more general
than what we state here, also involving a deterministic sequence of
unitary matrices.}:
\begin{thm}[Voiculescu]
\label{thm:Voiculescu}For any non-identity $w\in\F_{r}$, as $n\to\infty$
\begin{align}
\int_{\U(n)^{r}}\tr(w(x))d\mu(x) & =o_{w}(n).\label{eq:haar-word-integral}
\end{align}
\end{thm}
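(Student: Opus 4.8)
\emph{Proof plan.} As this is Voiculescu's theorem, the plan is to reprove it by the Weingarten-calculus argument that has since become standard; this in fact yields the stronger estimate $\int\tr(w(x))\,d\mu_{\U(n)^{r}}^{\mathrm{Haar}}(x)=O_{w}(1)$. First I would normalize $w$. Since $\tr$ is conjugation-invariant and $\tr(w(x))=\tr(\tilde w(x))$ whenever $w,\tilde w$ are conjugate in $\F_{r}$, I may replace $w$ by a cyclic reduction and assume $w=y_{1}\cdots y_{\ell}$ is reduced and cyclically reduced with $\ell\geq1$, each $y_{t}$ one of $x_{1}^{\pm1},\dots,x_{r}^{\pm1}$; I write $w(x)=w(U_{1},\dots,U_{r})$ for the substitution of the Haar variables $U_{i}$ for $x_{i}$. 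Writing $p_{i}$ (resp.\ $q_{i}$) for the number of $t$ with $y_{t}=x_{i}$ (resp.\ $y_{t}=x_{i}^{-1}$), the change of variables $U_{i}\mapsto e^{\sqrt{-1}\,\theta_{i}}U_{i}$ multiplies the integrand by $e^{\sqrt{-1}(p_{i}-q_{i})\theta_{i}}$ while preserving Haar measure, so the integral vanishes unless $p_{i}=q_{i}=:d_{i}$ for every $i$; assume this, so that $d_{i}\geq1$ for each generator occurring in $w$ and $\sum_{i}2d_{i}=\ell\geq2$.

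Next I would expand and integrate. Setting $[n]:=\{1,\dots,n\}$ and expanding the trace in matrix entries gives $\tr(w(U))=\sum_{a\colon\Z/\ell\Z\to[n]}\prod_{t}\bigl(g(y_{t})\bigr)_{a_{t}\,a_{t+1}}$, where $g(x_{i})=U_{i}$ and $g(x_{i}^{-1})=U_{i}^{*}$. As $U_{1},\dots,U_{r}$ are independent, I integrate each $U_{i}$ separately via
\[
\int_{\U(n)}\prod_{k=1}^{d}U_{j_{k}l_{k}}\prod_{k=1}^{d}\overline{U_{j'_{k}l'_{k}}}\;d\mu^{\mathrm{Haar}}(U)=\sum_{\sigma,\tau\in S_{d}}\Wg(n,\sigma\tau^{-1})\prod_{k=1}^{d}\delta_{j_{k}j'_{\sigma(k)}}\,\delta_{l_{k}l'_{\tau(k)}}
\]
with $d=d_{i}$. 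This writes $\int\tr(w(x))\,d\mu^{\mathrm{Haar}}$ as a sum --- over tuples $(\sigma_{i},\tau_{i})_{i=1}^{r}\in\prod_{i}\bigl(S_{d_{i}}\times S_{d_{i}}\bigr)$, a number of summands depending only on $w$ --- of terms $\bigl(\prod_{i}\Wg(n,\sigma_{i}\tau_{i}^{-1})\bigr)\,n^{c(\sigma_{\bullet},\tau_{\bullet})}$, where $c(\sigma_{\bullet},\tau_{\bullet})$ is the number of blocks of the partition of $\Z/\ell\Z$ generated by all the Kronecker deltas. Invoking the classical estimate $\Wg(n,\gamma)=O_{d}\!\bigl(n^{-(d+\|\gamma\|)}\bigr)$ for $\gamma\in S_{d}$, where $\|\gamma\|:=d-\#(\text{cycles of }\gamma)$ and the implied constant is uniform over the finite set $S_{d}$, each summand is $O_{w}(n^{e})$ with
\[
e(\sigma_{\bullet},\tau_{\bullet})=c(\sigma_{\bullet},\tau_{\bullet})-\sum_{i=1}^{r}\bigl(d_{i}+\|\sigma_{i}\tau_{i}^{-1}\|\bigr).
\]

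The proof then reduces to the \emph{genus inequality} $e(\sigma_{\bullet},\tau_{\bullet})\leq0$, valid for every tuple: granting it, $\int\tr(w(x))\,d\mu^{\mathrm{Haar}}$ is a sum of boundedly many $O_{w}(1)$ terms, hence $O_{w}(1)=o_{w}(n)$. I would establish this inequality through the standard topological reading of the Weingarten expansion: the data $\bigl((\sigma_{i},\tau_{i})_{i},\,w\bigr)$ assembles a ribbon-graph/branched-surface structure with one vertex per block counted by $c$, one edge per letter of $w$, and a controlled number of faces coming from $w$ and from the cycles of the $\sigma_{i}\tau_{i}^{-1}$, and $e$ equals --- up to an explicit additive constant --- its Euler characteristic; the bound is then the Euler-characteristic estimate for a connected orientable surface, the only violating configurations being those with a cancelling adjacency $y_{t+1}=y_{t}^{-1}$, which reducedness (including cyclic reducedness) of $w$ forbids. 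That some such hypothesis is essential is visible from the boundary case $w=e$, where $\tr(\Id)=n$ and $e=1$. Establishing this inequality cleanly is the main obstacle; a convenient alternative that isolates the same topological input is to first bound $\int|\tr(w(x))|^{2}\,d\mu^{\mathrm{Haar}}=\int\tr(w(x))\,\tr(w^{-1}(x))\,d\mu^{\mathrm{Haar}}=O_{w}(1)$ by the analogous two-face count (this uses only $w\neq e$), and then conclude from Cauchy--Schwarz that $\bigl|\int\tr(w(x))\,d\mu^{\mathrm{Haar}}\bigr|\leq\bigl(O_{w}(1)\bigr)^{1/2}=O_{w}(1)=o_{w}(n)$.
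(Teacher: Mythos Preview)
The paper does not give its own proof of this statement: Theorem~\ref{thm:Voiculescu} is quoted as Voiculescu's result from \cite{Voiculescu1991} and used as background. The paper does, however, remark in \S\S\ref{subsec:Proof-of-k=00003Dl=00003D0} that ``a straightforward application of the Weingarten calculus \cite{MPunitary}'' yields the sharper bound $O_{w}(1)$, and this is precisely your approach. So your proposal is correct and is the argument the paper has in mind when it invokes this result.

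Your sketch of the genus inequality $e(\sigma_{\bullet},\tau_{\bullet})\leq 0$ is right but can be made tighter than you indicate. In the topological reading (the $k=\ell=0$ case of the construction in \S4 of the paper), $e=\chi(\Sigma)$ for an orientable surface $\Sigma$ whose only boundary component is the $w$-loop; since every $\sigma$/$\tau$-arc has both endpoints on that loop and every $2$-cell meets at least one arc, $\Sigma$ is connected, hence $\chi(\Sigma)\in\{1,-1,-3,\ldots\}$. The case $\chi=1$ (a disc) is excluded by an innermost-arc argument: an innermost arc bounds a single $2$-cell, and checking the two cell-types forces a cyclically adjacent pair $f^{-1}f$ or $ff^{-1}$ in $w$ --- exactly the ``cancelling adjacency'' you name, contradicting cyclic reducedness. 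Thus in fact $e\leq -1$, giving $O_{w}(n^{-1})$, stronger than the $O_{w}(1)$ you claim. Your Cauchy--Schwarz alternative is valid but not genuinely simpler: when the surface with two boundary loops disconnects, you still need the same innermost-arc/reducedness argument to rule out disc components.
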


We describe the interpretation of Theorem \ref{thm:Voiculescu} as
convergence of non-commutative random variables momentarily. Before
this, we explain the main result of the current paper.

Another way to think about the integral (\ref{eq:haar-word-integral}),
that invites generalization, is to identify $\U(n)^{r}$ with $\Hom(\F_{r},\U(n))$
and Haar measure as a natural probability measure on this \emph{representation
variety}. Now it is natural to ask whether there are other infinite
discrete groups rather than $\F_{r}$ such that $\Hom(\F_{r},\U(n))$
has a natural measure, and whether similar phenomena as in Theorem
\ref{thm:Voiculescu} may hold. \emph{The main point of this paper
is to establish the analog of Theorem \ref{thm:Voiculescu} when $\F_{r}$
is replaced by the fundamental group of a compact surface of genus
at least 2. }

We now explain this generalization of Theorem \ref{thm:Voiculescu};
for technical reasons it superficially looks slightly different as
follows.
\begin{enumerate}
\item The integral (\ref{eq:haar-word-integral}) is equal to $0$ if $w\notin[\F_{r},\F_{r}]$,
the commutator subgroup of $\F_{r}$ \cite[Claim 3.1]{MP}, and if
$w\in[\F_{r},\F_{r}]$, the value of (\ref{eq:haar-word-integral})
is for $n\geq n_{0}(w)$ the same as the corresponding integral over
$\SU(n)^{r}\leq\U(n)^{r}$, where $\SU(n)$ is the subgroup of determinant
one matrices \cite[Prop. 3.1]{MageeRURSG1}. So in all cases of interest
we can replace $\U(n)$ by $\SU(n)$ in (\ref{eq:haar-word-integral}).
\item Since $\tr\circ w$ is invariant under the diagonal conjugation action
of $\SU(n)$ on $\Hom(\F_{r},\SU(n))\cong\SU(n)^{r}$, the integral\\
 $\int_{\SU(n)^{r}}\tr(w(x))d\mu(x)$ can be written as an integral
over\\
 $\Hom(\F_{r},\SU(n))/\PSU(n)$. Here $\PSU(n)$ is $\SU(n)$ modulo
its center.
\end{enumerate}
For $g\geq2$ let $\Sigma_{g}$ denote a closed topological surface
of genus $g$. We let $\Gamma_{g}$ denote the fundamental group of
$\Sigma_{g}$ with explicit presentation
\[
\Gamma_{g}=\langle a_{1},b_{1},\ldots,a_{g},b_{g}\,|\,[a_{1},b_{1}]\cdots[a_{g},b_{g}]\,\rangle.
\]
The most natural measure on $\Hom(\Gamma_{g},\SU(n))/\PSU(n)$ to
replace the measure induced by Haar measure on $\Hom(\F_{r},\SU(n))/\PSU(n)$
is called the Atiyah--Bott--Goldman measure. The definition of this
measure involves removing singular parts of $\Hom(\Gamma_{g},\SU(n))/\PSU(n)$.
Indeed, let  $\Hom(\Gamma_{g},\SU(n))^{\irr}$ denote the collection of homomorphisms
that are irreducible as linear representations. Then 
\[
\M_{g,n}\eqdf\Hom(\Gamma_{g},\SU(n))^{\irr}/\PSU(n)
\]
is a smooth manifold \cite{GOLDMANSYMPLECTIC}. Moreover there is
a symplectic form $\omega_{g,n}$ on $\M_{g,n}$ called the Atiyah--Bott--Goldman
form after \cite{AB,GOLDMANSYMPLECTIC}. This symplectic form gives,
in the usual way, a volume form on $\M_{g,n}$ denoted by $\mathrm{Vol}_{\M_{g,n}}$.
For many more details see Goldman \cite{GOLDMANSYMPLECTIC} or the
prequel paper \cite[\S\S 2.7]{MageeRURSG1}.

For any $\gamma\in\Gamma$, we obtain a function $\tr_{\gamma}:\Hom(\Gamma_{g},\SU(n))\to\C$
defined by
\[
\tr_{\gamma}(\phi)\eqdf\tr(\phi(\gamma)).
\]
This function descends to a function $\tr_{\gamma}:\M_{g,n}\to\C$.
We are interested in the expected value
\[
\E_{g,n}[\tr_{\gamma}]\eqdf\frac{\int_{\M_{g,n}}\tr_{\gamma}\,d\mathrm{Vol}_{\M_{g,n}}}{\int_{\M_{g,n}}d\mathrm{Vol}_{\M_{g,n}}}.
\]
The main theorem of this paper is the following.
\begin{thm}
\label{thm:main-convergence}Let $g\geq2$. If $\gamma\in\Gamma_{g}$
is not the identity, then $\E_{g,n}[\tr_{\gamma}]=O_{\gamma}(1)$
as $n\to\infty$. 
\end{thm}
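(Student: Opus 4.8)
The plan is to reduce the integral over moduli space to a representation-theoretic computation for $\SU(n)$, then control the resulting sum using the structure of the word problem in $\Gamma_g$. The key tool is the link between the Atiyah--Bott--Goldman volume and Witten's volume formula: the symplectic volume of $\M_{g,n}$, together with the integral of $\tr_\gamma$ against it, can be expanded as a sum over irreducible representations $\lambda$ of $\SU(n)$ of quantities built from the dimensions $\dim\lambda$ and, in the numerator, from the characters $\chi_\lambda$ evaluated on a "twisted" conjugacy class determined by $\gamma$. Concretely, I would first show that
\[
\int_{\M_{g,n}} d\mathrm{Vol}_{\M_{g,n}} \;=\; C_{g,n}\sum_{\lambda\in\widehat{\SU(n)}} \frac{1}{(\dim\lambda)^{2g-2}},
\]
and that $\int_{\M_{g,n}}\tr_\gamma\,d\mathrm{Vol}_{\M_{g,n}}$ admits a parallel expansion in which the summand is modified by a factor coming from how $\gamma$ sits inside $\Gamma_g$ relative to the surface relator $[a_1,b_1]\cdots[a_g,b_g]$. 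This is where Dehn's solution to the word problem enters: one uses a geodesic (Dehn-reduced) representative of $\gamma$ and the associated decomposition of the surface to express $\tr_\gamma$ as a product of elementary "building block" integrals over $\SU(n)$, each of which is evaluated by Schur orthogonality and Weingarten calculus.

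Second, I would isolate the leading contribution. The ratio $\E_{g,n}[\tr_\gamma]$ becomes a ratio of two sums over $\widehat{\SU(n)}$; the denominator is dominated (for $g\ge 2$, so that $2g-2\ge 2$) by the trivial representation and the first few small-dimensional representations, and the same should be true for the numerator up to a bounded correction. The heart of the matter is a uniform bound on each summand of the numerator: one must show that, for every irreducible $\lambda$, the $\gamma$-twisted summand is bounded by $(\dim\lambda)^{-(2g-2)}$ times a constant depending only on $\gamma$, with enough decay in $\dim\lambda$ to make the sum converge and with the $n$-dependence absorbed. Here classical invariant theory — the description of $\SU(n)$-invariants of tuples of matrices, and in particular the first and second fundamental theorems of invariant theory for $\SU(n)$ (Procesi, Weyl) — is used to reduce the building-block integrals to explicit combinatorial sums over pairings/permutations, whose asymptotics in $n$ are controlled.

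Third, I would combine the uniform per-$\lambda$ bound with the absolute convergence of $\sum_\lambda (\dim\lambda)^{-(2g-2)}$ (valid precisely because $g\ge 2$) to conclude that the numerator is $O_\gamma(1)$ while the denominator is bounded below by a positive constant (again the trivial representation contributes $1$), yielding $\E_{g,n}[\tr_\gamma]=O_\gamma(1)$.

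The main obstacle I anticipate is the uniform-in-$\lambda$, uniform-in-$n$ estimate on the twisted summand. The difficulty is twofold: first, the twist depends on the combinatorial complexity of $\gamma$ (its Dehn-reduced word length and how it interacts with the relator), and controlling the building-block integrals requires a careful bookkeeping of which Weingarten/character terms survive and showing their total size is $O_\gamma\bigl((\dim\lambda)^{-(2g-2)}\bigr)$ rather than merely $o(n\cdot(\dim\lambda)^{-(2g-2)})$; second, one must rule out the possibility that large-dimensional $\lambda$ conspire with the word structure of $\gamma$ to produce a non-negligible tail. Handling this is exactly where the interplay between Dehn's algorithm (which gives a canonical, length-controlled geometric picture of $\gamma$ on the surface) and the invariant theory (which turns each local piece into a tractable sum) does the real work, and I expect the bulk of the paper's technical effort to be concentrated there.
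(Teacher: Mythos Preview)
Your high-level picture is correct: the Atiyah--Bott--Goldman volume is governed by the Witten sum $\sum_\lambda(\dim\lambda)^{-(2g-2)}$, the numerator is a twisted version of this, and the combination of Weingarten-type integration with Dehn's work on the word problem is indeed the engine. But two genuine gaps separate your outline from a proof.

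First, the reduction step is not as you describe it. You propose proving a bound on the $\gamma$-twisted summand that is uniform over \emph{all} $\lambda\in\widehat{\SU(n)}$ and then summing. The paper does not do this, and it is unclear that such a uniform bound holds for $\lambda$ that are large relative to $n$. Instead, the prequel paper shows that one may truncate the sum to a \emph{finite} set $\tilde\Omega$ of pairs of Young diagrams $(\mu,\nu)$ (parametrizing the stable rational representations $W_n^{[\mu,\nu]}$ of $\U(n)$) with error $O(n^{-1})$; the current paper then fixes a single $(\mu,\nu)$ and proves $D_{[\mu,\nu]}(n)\,\J_n(w,[\mu,\nu])=O(1)$ as $n\to\infty$. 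So the object of study is a finite collection of \emph{families} of representations indexed by $n$, not a single sum over $\widehat{\SU(n)}$ with a uniform termwise bound.

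Second, and more seriously, you do not anticipate the failure of the naive Weingarten expansion for mixed representations (both $\mu,\nu\neq\emptyset$). The paper's $\S 3.1$ explains that expanding $s_{[\mu,\nu]}$ in power sums and applying the surface-sum/Weingarten method of Magee--Puder produces spurious contributions --- annuli pairing $R_g^k$ with $R_g^{-k}$ --- whose order grows like $n^{2k+\chi(\Sigma_{f_0})}$, which is arbitrarily large. These must cancel, but no mechanism for seeing the cancellation is known. The paper's key technical innovation is to bypass this: one realizes $W_n^{[\mu,\nu]}$ inside the \emph{trace-free} mixed tensor space $\dot{\mathcal T}_n^{k,\ell}$ and computes the projection onto it via a refined Weingarten formula. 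Because the relevant highest-weight vector lies in $\dot{\mathcal T}_n^{k,\ell}$, all matchings that pair an $R$-index with an $R^{-1}$-index vanish (the ``forbidden matchings'' property). Combinatorially this becomes property \textbf{P4} on the associated surfaces: no arc joins an $R$-loop to an $R^{-1}$-loop. It is precisely this restriction that makes the Euler-characteristic bound $\chi(\Sigma^*)\le -(k+\ell)$ provable via the Birman--Series refinement of Dehn's algorithm; without \textbf{P4} the bound is false (the annuli above violate it). Your outline, which treats the invariant-theory step as routine bookkeeping, does not contain this idea, and without it the per-representation estimate you need does not follow.
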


The non-commutative probabilistic consequences of Theorem \ref{thm:main-convergence}
will be discussed in the next section.

\subsection{Non-commutative probability}

We follow the book \cite{VDN}. A \emph{non-commutative probability
space} is a pair $(\B,\tau)$ where $\B$ is a complex unital algebra
and $\tau$ is a linear functional on $\B$ such that $\tau(1)=1$.
Let $\C\langle x_{1},\ldots,x_{r}\rangle$ denote the free non-commutative
unital algebra in indeterminates $x_{1},\ldots,x_{r}$. A \emph{random
variable }in $(\B,\tau)$ is an element of \emph{$\B$.} If $(X_{1},\ldots,X_{r})\in\B^{r}$
are random variables in $(\B,\tau)$, their \emph{joint distribution}
is defined to be the linear functional 
\[
\tilde{\tau}:\C\langle x_{1},\ldots,x_{r}\rangle\to\C
\]
defined by $\tilde{\tau}(z)\eqdf\tau(\Phi(z))$ where $\Phi:\C\langle x_{1},\ldots,x_{r}\rangle\to\B$
is the linear map defined by $\Phi(x_{i})=X_{i}$. For a linear functional
$\tilde{\tau}_{\infty}:\C\langle x_{1},\ldots,x_{r}\rangle\to\C$
with $\tilde{\tau}_{\infty}(1)=1$, we say that a sequence of random
variables $(X_{1}^{(n)},\ldots,X_{r}^{(n)})\in(\B_{n},\tau_{n})$
converge in distribution as $n\to\infty$ to $\tilde{\tau}_{\infty}$
if $\tilde{\tau}_{n}$ convergences pointwise to $\tilde{\tau}_{\infty}$
on $\C\langle x_{1},\ldots,x_{r}\rangle$.

A very concrete example of this phenomenon is as follows. The function
\[
\tau_{n}:\F_{r}\to\C,\quad\tau_{n}(w)\eqdf\frac{1}{n}\int_{\U(n)^{r}}\tr(w(x))d\mu(x)
\]
extends to a linear functional $\tau_{n}$ on the algebra $\C[\F_{r}]$
with $\tau_{n}(\id)=1$. From this point of view, Theorem \ref{thm:Voiculescu}
implies the following statement.
\begin{thm}[Voiculescu]
\label{thm:voicNCP}Let $r\geq0$ and $X_{1},\ldots,X_{r}$ denote
fixed generators of $\F_{r}$, and $\bar{X}_{1},\ldots.\bar{X}_{r}$
denote their inverses, i.e. $\bar{X}_{i}=X_{i}^{-1}$. The random
variables $X_{1},\ldots,X_{r},\bar{X}_{1},\ldots.\bar{X}_{r}$ in
the non-commutative probability spaces $(\C[\F_{r}],\tau_{n})$ converge
as $n\to\infty$ to a limiting distribution
\[
\tilde{\tau}_{\infty}:\C\langle x_{1},\ldots,x_{r},\bar{x}_{1},\ldots,\bar{x}_{r}\rangle\to\C
\]
that is completely determined by (\ref{eq:haar-word-integral}). Indeed,
if $w$ is any monomial in $x_{1},\ldots,x_{r},\bar{x}_{1},\ldots,\bar{x}_{r}$,
then $\tilde{\tau}_{\infty}(w)=1$ if and only if after identifying
$\bar{x}_{i}$ with $x_{i}^{-1}$, $w$ reduces to the identity in
$\F_{r}=\langle x_{1},\ldots,x_{r}\rangle$, and $\tilde{\tau}_{\infty}(w)=0$
otherwise. 
\end{thm}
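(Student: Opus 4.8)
The plan is to deduce this statement directly from Theorem \ref{thm:Voiculescu}, using the fact that in the group algebra $\C[\F_r]$ the product of the random variables is nothing but group multiplication. First I would make explicit the algebra homomorphism
$\pi:\C\langle x_1,\ldots,x_r,\bar x_1,\ldots,\bar x_r\rangle\to\C[\F_r]$
determined by $x_i\mapsto X_i$ and $\bar x_i\mapsto X_i^{-1}$: applied to a monomial $w=x_{i_1}\cdots x_{i_p}$ (in the $2r$ indeterminates), $\pi(w)$ is exactly the element of $\F_r$ obtained by substituting $X_i^{-1}$ for each $\bar x_i$ and freely reducing. By the very definition of the random variables $X_j^{(n)},\bar X_j^{(n)}$ as elements of $(\C[\F_r],\tau_n)$, we then have, for any $i_1,\ldots,i_p$,
\[
\tau_n\bigl(X_{i_1}^{(n)}\cdots X_{i_p}^{(n)}\bigr)=\tau_n(\pi(w))=\frac1n\int\tr\bigl(\pi(w)(x)\bigr)\,d\mu^{\mathrm{Haar}}_{\U(n)^r}(x).
\]
Thus the entire question collapses to evaluating a single word integral for the reduced word $\pi(w)\in\F_r$.

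Next I would split into two cases. If $\pi(w)$ is the identity of $\F_r$, then $\pi(w)(x)$ is the identity matrix for every $x\in\U(n)^r$, so $\tr(\pi(w)(x))=n$ and $\tau_n(\pi(w))=1$ for all $n$; in particular the limit is $1$. If $\pi(w)$ is a non-identity element of $\F_r$, then Theorem \ref{thm:Voiculescu} gives $\int\tr(\pi(w)(x))\,d\mu^{\mathrm{Haar}}_{\U(n)^r}(x)=o_{\pi(w)}(n)$, whence $\tau_n(\pi(w))=o(1)\to0$. This simultaneously shows that the joint limiting distribution exists and that it coincides with the functional described in the statement. To see that $\tau_\infty$ is a well-defined linear functional with $\tau_\infty(\id)=1$, I would simply set $\tau_\infty\eqdf\varepsilon\circ\pi$, where $\varepsilon:\C[\F_r]\to\C$ is the "coefficient of the identity" functional (linear, with $\varepsilon$ of a group element equal to $1$ if that element is the identity and $0$ otherwise); by construction $\tau_\infty$ assigns to each monomial $w$ precisely the value $1$ or $0$ according to whether $\pi(w)$ is trivial, matching the claim, and its factoring through $\pi$ makes the dependence only on the reduced word transparent.

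There is essentially no obstacle here: all of the analytic content already resides in Theorem \ref{thm:Voiculescu}, and the present statement is just its translation into the language of convergence of non-commutative random variables. The only point requiring a little care is not to conflate the free unital algebra $\C\langle x_1,\ldots,x_r,\bar x_1,\ldots,\bar x_r\rangle$, in which $w$ lives as a formal monomial, with the group algebra $\C[\F_r]$, in which the corresponding product telescopes down to a single reduced word; keeping the homomorphism $\pi$ explicit throughout removes any ambiguity.
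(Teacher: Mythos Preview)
Your proposal is correct and matches the paper's approach exactly: the paper does not give an explicit proof of this theorem but simply states that ``Theorem \ref{thm:Voiculescu} implies the following statement,'' and your write-up is precisely the spelling-out of that implication via the homomorphism $\pi$ and the case split on whether $\pi(w)=\id$.
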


In the language of \cite{Voiculescu1991}, in the limiting non-commutative
probability space \\$(\C\langle x_{1},\ldots,x_{r},\bar{x}_{1},\ldots,\bar{x}_{r}\rangle,\tilde{\tau}_{\infty})$,
the subalgebras
\[
\A_{1}\eqdf\C\langle x_{1},\bar{x}_{1}\rangle,\ldots,\A_{r}\eqdf\C\langle x_{r},\bar{x}_{r}\rangle
\]
are a \emph{free family of subalgebras}: if $a_{j}\in\A_{i_{j}}$
for $j\in[q]$, $i_{1}\neq i_{2}\neq\cdots\neq i_{q}$, and $\tilde{\tau}_{\infty}(a_{j})=0$
for $j\in[q]$ then
\[
\tilde{\tau}_{\infty}(a_{1}a_{2}\cdots a_{q})=0.
\]
Accordingly, it is said \cite[Thm. 3.8]{Voiculescu1991} that if $\{u_{j}(n):1\leq j\leq r\}$
are independent Haar-random elements of $\U(n)$, the family $\{\{u_{j}(n),u_{j}^{*}(n)\}:1\leq j\leq r\}$
of sets of random variables are \emph{asymptotically free}. 

Because $\Gamma_{g}$ is not free, asymptotic freeness does not correctly
capture the asymptotic behavior of the expected values $\E_{g,n}[\tr_{\gamma}]$,
however, an analog of Theorem \ref{thm:voicNCP} is implied by Theorem
\ref{thm:main-convergence}. For $\gamma\in\Gamma_{g}$ let 
\[
\tau_{g,n}(\gamma)\eqdf\frac{1}{n}\E_{g,n}[\tr_{\gamma}].
\]

\begin{cor}
\label{cor:main-cor}Let $g\geq2$, $a_{1},b_{1},\ldots,a_{g},b_{g}$
denote the previously fixed generators of $\Gamma_{g}$, and $\bar{a}_{1},\bar{b}_{1},\ldots,\bar{a}_{g},\bar{b}_{g}$
denote their inverses. The random variables $a_{1},b_{1},\ldots,a_{g},b_{g}$,
$\bar{a}_{1},\bar{b}_{1},\ldots,\bar{a}_{g},\bar{b}_{g}$ in the non-commutative
probability spaces $(\C[\Gamma_{g}],\tau_{g,n})$ converge in distribution
as $n\to\infty$ to a limiting distribution
\[
\tilde{\tau}_{g,\infty}:\C\langle x_{1},\ldots,x_{g},y_{1},\ldots,y_{g},\bar{x}_{1},\ldots,\bar{x}_{g},\bar{y}_{1},\ldots,\bar{y}_{g}\rangle\to\C,
\]
where $x_{i}$ (resp. $y_{i},\bar{x}_{i},\bar{y}_{i}$) corresponds
to $a_{i}$ (resp. $b_{i},\bar{a}_{i},\bar{b}_{i})$. This can be
described explicitly as follows. If $w$ is any monomial in $x_{1},\ldots,x_{g},y_{1},\ldots,y_{g},$
$\bar{x}_{1},\ldots,\bar{x}_{g},\bar{y}_{1},\ldots,\bar{y}_{g}$,
then $\tilde{\tau}_{g,\infty}(w)=1$ if and only if $w$ maps to the
identity under the map
\[
\C\langle x_{1},\ldots,x_{g},y_{1},\ldots,y_{g},\bar{x}_{1},\ldots,\bar{x}_{g},\bar{y}_{1},\ldots,\bar{y}_{g}\rangle\to\C[\Gamma_{g}]
\]
obtained by identifying $x_{i},y_{i},\bar{x}_{i},\bar{y}_{i}$ with
the corresponding elements of $\Gamma_{g}$. If $w$ does not map
to the identity under this map, then $\tilde{\tau}_{g,\infty}(w)=0$.
\end{cor}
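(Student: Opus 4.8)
The plan is to deduce the corollary formally from Theorem \ref{thm:main-convergence}, the only non-formal input being that theorem itself.

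First I would unwind the definitions. The underlying algebra of each non-commutative probability space is $\B = \C[\Gamma_g]$, and the random variables are the group elements $a_i,b_i$ together with their inverses $\bar a_i = a_i^{-1}$, $\bar b_i = b_i^{-1}$, regarded inside $\B$. Under the algebra homomorphism $\C\langle x_1,\ldots,x_g,y_1,\ldots,y_g,\bar x_1,\ldots,\bar x_g,\bar y_1,\ldots,\bar y_g\rangle \to \C[\Gamma_g]$ of the statement, a monomial $w$ in the indeterminates is sent to a single group element $\gamma(w) \in \Gamma_g$ (a product of the $a_i^{\pm1},b_i^{\pm1}$ with coefficient $1$), and the empty monomial is sent to $\id$. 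Hence, directly from the definition of $\tilde\tau_n$,
\[
\tilde\tau_n(w) = \tilde\tau_n(\gamma(w)) = \frac{1}{n}\,\E_{g,n}[\tr_{\gamma(w)}].
\]
So it is enough to compute $\lim_{n\to\infty}\tfrac{1}{n}\E_{g,n}[\tr_\gamma]$ for each $\gamma\in\Gamma_g$; since monomials form a vector-space basis of the free algebra, any such assignment of limits extends uniquely to a linear functional $\tilde\tau_\infty$, and $\tilde\tau_\infty(1)=1$ because the empty monomial gives $\gamma = \id$.

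Next I would treat the two cases. If $\gamma = \id$, then $\tr_\gamma(\phi) = \tr(\phi(\id)) = \tr(\mathrm{Id}) = n$ for every $\phi$, so $\tr_\gamma$ is the constant $n$ on $\M_{g,n}$, hence $\E_{g,n}[\tr_\gamma] = n$ and $\tilde\tau_n(\id) = 1$ for all $n$; in particular the limit is $1$. If $\gamma \neq \id$, then Theorem \ref{thm:main-convergence} gives $\E_{g,n}[\tr_\gamma] = O_\gamma(1)$, so $\tilde\tau_n(\gamma) = \tfrac{1}{n}O_\gamma(1) \to 0$. Therefore $\lim_{n\to\infty}\tilde\tau_n(w)$ exists for every monomial $w$ and equals $1$ if $\gamma(w) = \id$ and $0$ otherwise — which is exactly the asserted convergence of $(a_1,b_1,\ldots,\bar b_g)$ to $\tilde\tau_\infty$, with the explicit description given in the statement.

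There is no genuine obstacle at this stage: the corollary is a purely formal repackaging of Theorem \ref{thm:main-convergence} together with the $n$-independent identity $\tr_\id = n$. (One need only note in passing that $\E_{g,n}[\tr_\gamma]$ is well defined, which uses that $\M_{g,n}$ has finite Atiyah-Bott-Goldman volume, as established in the prequel \cite{MageeRURSG1}; this is already implicit in the statement of Theorem \ref{thm:main-convergence}.) All of the difficulty lies in proving Theorem \ref{thm:main-convergence} itself, which occupies the remainder of the paper.
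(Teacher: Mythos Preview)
Your proposal is correct and matches the paper's approach: the paper does not give a separate proof of this corollary, stating only that it ``is implied by Theorem \ref{thm:main-convergence}'' and remarking that the $O_\gamma(1)$ bound is stronger than the $o_\gamma(n)$ actually needed. Your write-up is exactly the intended unpacking of this implication.
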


Notice that the estimate given in Theorem \ref{thm:main-convergence}
is stronger than needed to establish Corollary \ref{cor:main-cor}.

\subsection{Related works and further questions}

The most closely related existing result to Theorem \ref{thm:main-convergence}
is a theorem of the author and Puder \cite[Thm. 1.2]{MPasympcover}
that establishes Theorem \ref{thm:main-convergence} when the family
of groups $\SU(n)$ is replaced by the family of symmetric groups
$S_{n}$, and $\tr$ is replaced by the character $\fix$ given by
the number of fixed points of a permutation. In this case, the result
is phrased in terms of integrating over $\Hom(\Gamma_{g},S_{n})$
with respect to the uniform probability measure. The corresponding
result for $\Hom(\F_{r},S_{n})$ was proved much longer ago by Nica
in \cite{nica1994number}.

The problem of integrating geometric functions like $\tr_{\gamma}$
over $\M_{g,n}$ is also connected to the work of Mirzakhani since,
as Goldman explains in \cite[\S 2]{GOLDMANSYMPLECTIC}, the Atiyah--Bott--Goldman
symplectic form generalizes the Weil--Petersson symplectic form on
the Teichmüller space of genus $g$ Riemann surfaces. In \cite{Mirzakhani},
Mirzakhani developed a method for integrating geometric functions
on moduli spaces of Riemann surfaces with respect to the Weil--Petersson
volume form. Although there is certainly a similarity between \emph{(ibid.)
}and the current work, here the emphasis is on $n\to\infty$, whereas
\emph{(ibid.)} caters to the regime $g\to\infty$; the target group
playing the role of $\SU(n)$ is always $\mathsf{\mathsf{PSL}}(2,\R)$.

We now take the opportunity to mention some questions that Theorem
\ref{thm:main-convergence} leads to. In the paper \cite{Voiculescu1991},
Voiculescu is able to boost Theorem \ref{thm:Voiculescu} from a convergence
in distribution result to a result on convergence in probability,
that is, for any $\epsilon>0$, and fixed $w\in\F_{r}$, the Haar
measure of the set 
\[
\{\,\phi\in\Hom(\F_{r},\U(n))\,:\,|\tr(\phi(w))|\leq\epsilon n\,\}
\]
tends to one as $n\to\infty$ \cite[Thm. 3.9]{Voiculescu1991}. To
do this, Voiculescu uses that the family of measure spaces $(\Hom(\F_{r},\U(n)),\mu)$
form a\emph{ Levy family} in the sense of Gromov and Milman \cite{GromovMilman}.
This latter fact relies on an estimate for the first non-zero eigenvalue
of the Laplacian on $\Hom(\F_{r},\U(n))$. It is interesting to ask
whether a similar phenomenon holds for the family of measure spaces
$(\M_{g,n},\mu_{g,n}^{\mathrm{ABG}})$ where $\mu_{g,n}^{\mathrm{ABG}}$
is the probability measure corresponding to $\mathrm{Vol}_{\M_{g,n}}$.
The fact that $\M_{g,n}$ is non-compact seems to be a significant
complication in answering this question using isoperimetric inequalities.

On the other hand, as pointed out to us by a referee, the results
of this paper can very likely be extended to give bounds on the variance
\[
\E_{g,n}[|\tr_{\gamma}|^{2}]
\]
that can be used to improve Theorem \ref{thm:main-convergence} to
the result that for $\gamma\neq\id$, the normalized traces $\frac{\tr_{\gamma}}{n}$
converge in probability to zero as $n\to\infty$. To avoid adding
complications to this paper, this will be pursued elsewhere.

In the prequel to this paper \cite{MageeRURSG1} it was proved that
for any fixed $\gamma\in\Gamma_{g}$, there is an infinite sequence
of rational numbers $a_{-1}(\gamma),a_{0}(\gamma),a_{1}(\gamma),\ldots\in\Q$
such that for any $M\in\N$, 
\begin{equation}
\E_{g,n}[\tr_{\gamma}]=a_{-1}(\gamma)n+a_{0}(\gamma)+\frac{a_{1}(\gamma)}{n}+\cdots+\frac{a_{M-1}(\gamma)}{n^{M-1}}+O_{\gamma,M}\left(\frac{1}{n^{M}}\right)\label{eq:Laurent}
\end{equation}
as $n\to\infty$. Theorem \ref{thm:main-convergence} implies that
$a_{-1}(\gamma)=0$ if $\gamma\neq\id$. It is also interesting to
understand the other coefficients of this series. This has been accomplished
when $\Gamma_{g}$ is replaced by $\F_{r}$ by the author and Puder
in \cite{MPunitary} where in fact it is proved that
\[
\E_{\F_{r},n}[\tr_{w}]\eqdf\int_{\U(n)^{r}}\tr(w(x))d\mu(x)
\]
is given by a \emph{rational }function of $n$ and in particular can
be expanded as in (\ref{eq:Laurent}). The corresponding coefficients
of the Laurent series of $\E_{\F_{r},n}[\tr_{w}]$ are explained in
terms of Euler characteristics of subgroups of mapping class groups.
One corollary is that as $n\to\infty$
\begin{equation}
\E_{\F_{r},n}[\tr_{w}]=O\left(\frac{1}{n^{2\mathrm{cl}(w)-1}}\right)\label{eq:cl}
\end{equation}
where $\mathrm{cl}(w)$ is the the \emph{commutator length of $w$}:
the minimal number of commutators that $w$ can be written as a product
of, or $\infty$ if $w\notin[\F_{r},\F_{r}]$. We guess that an estimate
like (\ref{eq:cl}) should hold for $\E_{g,n}[\tr_{\gamma}]$ where
commutator length in $\F_{r}$ is replaced by commutator length in
$\Gamma_{g}$.

Another strengthening of Theorem \ref{thm:Voiculescu} is the \emph{strong
asymptotic freeness} of Haar unitaries. This states that for any complex
linear combination
\[
\sum_{w}a_{w}w\in\C[\F_{r}],
\]
almost surely w.r.t. Haar random $\phi\in\Hom(\F_{r},\U(n))$ as $n\to\infty$,
we have
\[
\Big\|\sum_{w}a_{w}\phi(w)\Big\|\to\Big\|\sum_{w}a_{w}w\Big\|_{\mathrm{Op}(\ell^{2}(\F_{r}))}
\]
where the left hand side is the operator norm on $\C^{n}$ with standard
Hermitian inner product and the norm on the right hand side is the
operator norm in the regular representation of $\F_{r}$. This result
was proved by Collins and Male in \cite{Collins2014}. It is probably
very hard to extend this result to $\Gamma_{g}$; the proof of Collins
and Male relies on seminal work of Haagerup and Thorbjørnsen \cite{Haagerup2005}
in a way that does not obviously extend to $\Gamma_{g}$.

We finally mention that the expected values $\E_{g,n}[\tr_{\gamma}]$
arise as a limiting form of expected values of Wilson loops in 2D
Yang-Mills theory, when the coupling constant is set to zero. This
will not be discussed in detail here, we refer the reader instead
to the introduction of \cite{MageeRURSG1}. Here we just mention recent
works of Lemoine \cite{Lemoine}and Dahlqvist---Lemoine \cite{https://doi.org/10.48550/arxiv.2201.05882}
that make progress on related problems in the Yang-Mills setting.

\subsection{Overview of paper}

Here we explain the structure of the paper.

In $\S\S$\ref{subsec:Representation-theory-of-sym-groups}-$\S\S$\ref{subsec:Witten-zeta-functions}
we give some general background to the paper not depending on the
prequel \cite{MageeRURSG1}. In $\S\S\ref{subsec:Results-of-the}$
we import results that we proved in the prequel and that are needed
here.

At the beginning of $\S\ref{sec:The-contribution-from-single-small-dim}$
we state the key result (Theorem \ref{thm:main-single-small-dim})
of the remainder of the paper. To motivate things, $\S\S$\ref{subsec:Setup-and-motivation}
contains a discussion of why the most straightforward approach does
not work, and also a discussion of what will follow instead. In the
remainder of $\S\ref{sec:The-contribution-from-single-small-dim}$
we explain how to augment the Weingarten calculus to arrive to a formula
for the key quantity $\J_{n}(w,\mu,\nu)$ (defined in Proposition
\ref{prop:passage-to-unitary}) in combinatorial terms that are `good'
for the next part of the argument.

Indeed in $\S\S$\ref{subsec:Construction-of-surfaces} we explain
how each combinatorial datum we encountered in our formula for $\J_{n}(w,\mu,\nu)$
can be used to build a decorated surface. In Corollary \ref{cor:goodmatchings}
we obtain a bound on $\J_{n}(w,\mu,\nu)$ in terms of the Euler characteristics
of some of the surfaces that previously arose. We may restrict to
certain surfaces of simplified form by performing two surgery arguments
explained in $\S\S$\ref{subsec:Two-simplifying-surgeries}. Given
that now we have reduced estimating $\J_{n}(w,\mu,\nu)$ to estimating
Euler characteristics of certain surfaces, in $\S\S$\ref{subsec:A-topological-result}
we formulate a topological result (Proposition \ref{prop:chi-bound})
which suffices to prove Theorem \ref{thm:main-single-small-dim}.
Proposition \ref{prop:chi-bound} is proved in $\S\S$\ref{subsec:Proof-of-chi-bound}
using arguments related to Dehn's algorithm and the work of Birman---Series.
The necessary additional background for this proof is given in $\S\S$\ref{subsec:Work-of-Dehn}.

In $\S$\ref{sec:Proof-of-main} we show how Theorem \ref{thm:main-single-small-dim}
in conjunction with the results of the prequel \cite{MageeRURSG1}
prove Theorem \ref{thm:main-convergence}.

\subsection{Notation}

We write $\N$ for the natural numbers $\{1,2,3,\ldots\}$ and $\N_{0}\eqdf\N\cup\{0\}$.
We write $[n]\eqdf\{1,\ldots,n\}$ for $n\in\N$ and $[k,\ell]\eqdf\{k,k+1,\ldots,\ell\}$
for $k,\ell\in\N$. If $A$ and $B$ are two sets we write $A\backslash B$
for the elements of $A$ not in $B$. If $H$ is a group and $h_{1},h_{2}\in H$
we write $[h_{1},h_{2}]\eqdf h_{1}h_{2}h_{1}^{-1}h_{2}^{-1}$. We
let $\id$ denote the identity element of a group. We let $[H,H]$
be the subgroup of $H$ generated by elements of the form $[h_{1},h_{2}]$;
this is called the commutator subgroup of $H$. If $V$ is a complex
vector space, for $q\in\N_{0}$ we let
\[
V^{\otimes q}\eqdf\underbrace{V\otimes V\otimes\cdots\otimes V}_{q}.
\]

We use Vinogradov notation as follows. If $f$ and $h$ are functions
of $n\in\N$, we write $f\ll h$ to mean that there are constants
$n_{0}\geq0$ and $C_{0}\geq0$ such that for $n\geq n_{0}$, $|f(n)|\leq C_{0}h(n)$.
We write $f=O(h)$ to mean $f\ll h$. We write $f\asymp h$ to mean
both $f\ll h$ and $h\ll f$. If in any of these statements the implied
constants depend on additional parameters we add these parameters
as subscript to $\ll,O,$ or $\asymp$. Throughout the paper we view
the genus $g$ as fixed and so any implied constant may depend on
$g$. 

In this paper, $\Tr$ denotes the standard (unnormalized) trace on
square complex matrices.

\subsubsection*{Acknowledgments}

We thank Benoît Collins, Antoine Dahlqvist, Doron Puder, Sanjaye Ramgoolam, Calum Shearer,
and Henry Wilton for valuable discussions about this work. This project
has received funding from the European Research Council (ERC) under
the European Union’s Horizon 2020 research and innovation programme
(grant agreement No 949143).

\global\long\def\i{\mathfrak{i}}%

\section{Background}

\subsection{Representation theory of symmetric groups\label{subsec:Representation-theory-of-sym-groups}}

Let $S_{k}$ denote the symmetric group of permutations of $[k]\eqdf\{1,\ldots,k\}$,
and $\C[S_{k}]$ denote its group algebra. The group $S_{0}$ is by
definition the group with one element.

If we refer to $S_{\ell}\leq S_{k}$ with $\ell\leq k$ we always
view $S_{\ell}$ as the subgroup of permutations that fix every element
of $[\ell+1,k]\eqdf\{\ell+1,\ldots,k\}$. We write $S'_{r}\leq S_{k}$
for the subgroup of permutations that fix every element of $[k-r]$.
As a consequence we obtain fixed inclusions $\C[S_{\ell}]\subset\C[S_{k}]$
for $\ell$ and $k$ as above. When we write $S_{\ell}\times S_{k-\ell}\leq S_{k}$,
the first factor is $S_{\ell}$ and the second factor is $S'_{k-\ell}$.

A \emph{Young diagram }$\lambda$ is a left-aligned contiguous collection
of identical square boxes in the plane, such that the number of boxes
in each row is non-increasing from top to bottom. We write $\lambda_{i}$
for the number of boxes in the $i$\textsuperscript{th} row of $\lambda$
and say $\lambda\vdash k$ if $\lambda$ has $k$ boxes. We write
$\ell(\lambda)$ for the number of rows of $\lambda$. For each $\lambda\vdash k$
there is a \emph{Young subgroup
\[
S_{\lambda}\eqdf S_{\lambda_{1}}\times S_{\lambda_{2}}\times\cdots\times S_{\lambda_{\ell(\lambda)}}\leq S_{k}
\]
}where the factors are subgroups in the obvious way, according to
the increasing order of $[k]$.

The equivalence classes of irreducible representations of $S_{k}$
are in one-to-one correspondence with Young diagrams $\lambda\vdash k$.
Given $\lambda$, the construction of the corresponding irreducible
representation $V^{\lambda}$ can be done for example using Young
symmetrizers as in \cite[Lec. 4]{FH}. We write $\chi_{\lambda}$
for the character of $S_{k}$ associated to $V^{\lambda}$ and $d_{\lambda}\eqdf\chi_{\lambda}(\id)=\dim V^{\lambda}$.
Given $\lambda\vdash k$, the element
\[
\p_{\lambda}\eqdf\frac{d_{\lambda}}{k!}\sum_{\sigma\in S_{k}}\chi_{\lambda}(\sigma)\sigma\in\C[S_{k}]
\]
 is a central idempotent in $\C[S_{k}]$. 

If $G$ is a compact group, $(\rho,W)$ is an irreducible representation
of $G$, and $(\pi,V)$ is any finite-dimensional representation of
$G$, the $(\rho,W)$-isotypic subspace of $(\pi,V)$ is the invariant
subspace of $V$ spanned by all irreducible direct summands of $(\pi,V)$
that are isomorphic to $(\rho,W)$. When $\rho$ and $\pi$ can be
inferred from $W$ and $V$ we call this simply the $W$\emph{-isotypic
subspace} \emph{of} $V$. If $H\leq G$ is a subgroup, and $(\rho,W)$
is an irreducible representation of $H$, then the $W$-isotypic subspace
of $V$ for $H$ is the $W$-isotypic subspace of the restriction
of $(\pi,V)$ to $H$.

If $(\pi,V)$ is any finite-dimensional unitary representation of
$S_{k}$, and $\lambda\vdash k$, then $V$ is also a module for $\C[S_{k}]$
by linear extension of $\pi$ and $\pi(\p_{\lambda})$ is the orthogonal
projection onto the $V^{\lambda}$-isotypic subspace of $V$.

For any compact group $G$ we write $(\triv_{G},\C)$ for the trivial
representation of $G$. The following lemma can be deduced for example
by combining Young's rule \cite[Cor. 4.39]{FH} with Frobenius reciprocity.
\begin{lem}
\label{lem:Youngs-rule-application}Let $k\in\N_{0}$, and $\lambda\vdash k$.
The space of vectors in $V^{\lambda}$ fixed by $S_{\lambda}$ is
one-dimensional.
\end{lem}

\subsection{Representation theory of $\protect\U(n)$ and $\protect\SU(n)$\label{subsec:Representation-theory-of-U(n)-SU(n)}}

Every irreducible representation of $\U(n)$ restricts to an irreducible
representation of $\SU(n)$, and all equivalence classes of irreducible
representations of $\SU(n)$ arise in this way. The equivalence classes
of irreducible representations of $\U(n)$ are parameterized by dominant
weights, that can be thought of as non-increasing sequences 
\[
\Lambda=(\Lambda_{1},\ldots,\Lambda_{n})\in\Z^{n},
\]
also known as \emph{signatures. }We write $W^{\Lambda}$ for the irreducible
representation of $\U(n)$ corresponding to the signature $\Lambda$.
Two irreducible representations of $\U(n)$ restrict to the same one
of $\SU(n)$ if and only if their signatures differ by a constant
vector. Let $\mathbb{T}(n)$ denote the maximal torus of $\U(n)$
consisting of diagonal matrices. Any matrix of $\mathbb{T}(n)$ has
the form $\mathrm{diag}(\exp(i\theta_{1}),\ldots,\exp(i\theta_{n}))$
where all $\theta_{j}\in\R$. Associated to the signature $\Lambda$
is the character $\xi_{\Lambda}$ of $\mathbb{T}(n)$ given by 
\[
\xi_{\Lambda}\left(\mathrm{diag}(\exp(i\theta_{1}),\ldots,\exp(i\theta_{n}))\right)\eqdf\exp\left(i\left(\sum_{j=1}^{n}\Lambda_{j}\theta_{j}\right)\right).
\]
The highest weight theory says among other things that the $\xi_{\Lambda}$-isotypic
subspace of $W^{\Lambda}$ for $\mathbb{T}(n)$ is one-dimensional.
Any vector in this subspace is called a \emph{highest weight vector
}of $W^{\Lambda}$.

Given $k,\ell\in\N_{0}$ and fixed Young diagrams $\mu\vdash k$,
$\nu\vdash\ell$, we define a family of representations of $\U(n)$
as follows. For $n\geq\ell(\mu)+\ell(\nu)$ define
\[
\Lambda_{\mu,\nu}(n)\eqdf(\mu_{1},\mu_{2},\ldots,\mu_{\ell(\mu)},\underbrace{0,\ldots,0}_{n-\ell(\mu)-\ell(\nu)},-\nu_{\ell(\nu)},-\nu_{\ell(\nu)-1},\ldots,-\nu_{1}).
\]
We let $(\rho_{n}^{\mu,\nu},W_{n}^{\mu,\nu})$ denote the irreducible
representation of $\U(n)$ corresponding to $\Lambda_{\mu,\nu}(n)$
when $n\geq\ell(\mu)+\ell(\nu)$. We let $D_{\mu,\nu}(n)\eqdf\dim W_{n}^{\mu,\nu}$
and $s_{\mu,\nu}(g)\eqdf\tr(\rho_{n}^{\mu,\nu}(g))$ for $g\in\U(n)$.
If $\mu\vdash k$ and $\nu\vdash\ell$ then as $n\to\infty$ 
\begin{equation}
D_{\mu,\nu}(n)\asymp n^{k+\ell}\label{eq:dimension-asymp}
\end{equation}
by \cite[Cor. 2.3]{MageeRURSG1} (alternatively \cite[Lem. 3.5]{EnomotoIzumi}).

We now present a version of Schur-Weyl duality for mixed tensors due
to Koike \cite{Koike}. The very definition of $\U(n)$ makes $\C^{n}$
into a unitary representation of $\U(n)$ for the standard Hermitian
inner product. We let $\{e_{1},\ldots,e_{n}\}$ denote the standard
basis of $\C^{n}$. If $(\rho,W)$ is any finite dimensional representation
of $\U(n)$ we write $(\rho^{\vee},W^{\vee})$ for the dual representation
where $W^{\vee}$ is the space of complex linear functionals on $W$.
The vector space $(\C^{n})^{\vee}$ has a dual basis $\{\check{e}_{1},\ldots,\check{e}_{n}\}$
given by $\check{e}_{j}(v)\eqdf\langle v,e_{j}\rangle$. Throughout
the paper we frequently use certain canonical isomorphisms e.g.
\[
\left((\C^{n})^{\otimes p}\right)^{\vee}\cong\left(\left(\C^{n}\right)^{\vee}\right){}^{\otimes p},\quad\End(W)\cong W\otimes W^{\vee}
\]
to change points of view on representations; if we use non-canonical
isomorphisms we point them out.

Let $\T_{_{n}}^{k,\ell}\eqdf(\C^{n})^{\otimes k}\otimes\left(\left(\C^{n}\right)^{\vee}\right){}^{\otimes\ell}$,
with the convention that $(\C^{n})^{\otimes0}\eqdf\C$. With the natural
inner product induced by that on $\C^{n}$, this is a unitary representation
of $\U(n)$ under the diagonal action and also a unitary representation
of $S_{k}\times S_{\ell}$ where $S_{k}$ acts by permuting the indices
of $(\C^{n})^{\otimes k}$ and $S_{\ell}$ acts by permuting the indices
of $\left(\left(\C^{n}\right)^{\vee}\right){}^{\otimes\ell}$. We
write $\pi_{n}^{k,\ell}:\U(n)\to\End[\tkl]$ and $\rho_{n}^{k,\ell}:\C[S_{k}\times S_{\ell}]\to\End[\tkl]$
for these representations. The actions of $\U(n)$ and $S_{k}\times S_{\ell}$
on $\tkl$ commute. We use the notation, for $I=(i_{1},\ldots,i_{k})\in[n]^{k}$
and $J=(j_{1},\ldots,j_{\ell})\in[n]^{\ell}$
\begin{align*}
e_{I} & \eqdf e_{i_{1}}\otimes\cdots\otimes e_{i_{k}}\in(\C^{n})^{\otimes k},\quad\check{e}_{J}\eqdf\check{e}_{j_{1}}\otimes\cdots\otimes\check{e}_{j_{\ell}}\in\left(\left(\C^{n}\right)^{\vee}\right){}^{\otimes\ell},\\
e_{I}^{J} & \eqdf e_{I}\otimes\check{e}_{J}\in\T_{n}^{k,\ell}.
\end{align*}
We write $I\sqcup J$ for the concatenation $(i_{1},\ldots,i_{k},j_{1},\ldots,j_{\ell})$.

For $k,\ell\geq1$ let $\tkld$ denote the intersection of the kernels
of the mixed contractions $c_{pq}:\tkl\to\mathcal{T}^{k-1,\ell-1}$,
$p\in[k]$, $q\in[\ell]$ given by
\begin{align}
 & c_{pq}(e_{i_{1}}\otimes\cdots\otimes e_{i_{k}}\otimes\check{e}_{j_{1}}\otimes\cdots\otimes\check{e}_{j_{\ell}})\nonumber \\
 & \eqdf\delta_{i_{p}j_{q}}e_{i_{1}}\otimes\cdots\otimes e_{i_{p-1}}\otimes e_{i_{p+1}}\otimes\cdots\otimes e_{i_{k}}\label{eq:contraction-def}\\
 & \thinspace\thinspace\thinspace\thinspace\thinspace\thinspace\thinspace\thinspace\otimes\check{e}_{j_{1}}\otimes\cdots\otimes\check{e}_{j_{q-1}}\otimes\check{e}_{j_{q+1}}\otimes\cdots\otimes\check{e}_{j_{\ell}},\nonumber 
\end{align}
where $\delta_{i_{p}j_{q}}$ is the Kronecker delta. If $k=1$ or
$\ell=1$ then the definition is extended in the natural way, interpreting
an empty tensor of $e_{i}$ or $\check{e}_{i}$ as $1$. If either
$k=0$ or $\ell=0$ then $\tkld=\tkl$ by convention. The space $\tkld$
is an invariant subspace under $\U(n)\times S_{k}\times S_{\ell}$
and hence a unitary subrepresentation of $\tkl$. On $\tkld$ there
is an analog of Schur--Weyl duality due to Koike.
\begin{thm}
\label{thm:schur-weyl-mixed}\cite[Thm. 1.1]{Koike}~There is an
isomorphism of unitary representations of $\U(n)\times S_{k}\times S_{\ell}$
\begin{equation}
\tkld\cong\bigoplus_{\substack{\mu\vdash k,\nu\vdash\ell\\
\ell(\mu)+\ell(\nu)\leq n
}
}W_{n}^{\mu,\nu}\otimes V^{\mu}\otimes V^{\nu}.\label{eq:schurweyl-mixed}
\end{equation}
\end{thm}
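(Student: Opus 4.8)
My plan is to deduce this from classical Schur--Weyl duality, treating the contraction maps $c_{pq}$ in the spirit of classical invariant theory. Applying classical Schur--Weyl duality to the two blocks of tensor factors separately gives $(\C^{n})^{\otimes k}\cong\bigoplus_{\mu\vdash k}W_{n}^{[\mu,\emptyset]}\otimes V^{\mu}$ and $(\check{\C^{n}})^{\otimes\ell}\cong\bigoplus_{\nu\vdash\ell}W_{n}^{[\emptyset,\nu]}\otimes V^{\nu}$ (using self-duality of $S_{m}$-representations), where $W_{n}^{[\mu,\emptyset]}$ is the polynomial irreducible of signature $(\mu_{1},\dots,\mu_{\ell(\mu)},0,\dots,0)$, understood as $0$ when $\ell(\mu)>n$. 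Tensoring,
\[
\tkl\cong\bigoplus_{\mu\vdash k,\ \nu\vdash\ell}\bigl(W_{n}^{[\mu,\emptyset]}\otimes W_{n}^{[\emptyset,\nu]}\bigr)\otimes V^{\mu}\otimes V^{\nu}
\]
as $\U(n)\times S_{k}\times S_{\ell}$-representations, and the task becomes to cut out $\tkld$ from this decomposition.

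Up to a positive scalar the adjoint of $c_{pq}$ is the insertion map $\iota_{pq}\colon\mathcal{T}^{k-1,\ell-1}\to\tkl$ that inserts $\sum_{i}e_{i}\otimes\check{e}_{i}$ into positions $p,q$, so $\tkl=\tkld\oplus\sum_{p,q}\iota_{pq}(\mathcal{T}^{k-1,\ell-1})$ is an orthogonal, $\U(n)\times S_{k}\times S_{\ell}$-invariant decomposition. A weight count shows every $\U(n)$-constituent $W_{n}^{[\alpha,\beta]}$ of $\mathcal{T}^{k',\ell'}$ has $|\alpha|\le k'$ and $|\beta|\le\ell'$ (for a weight vector $e_{I}^{J}$ of weight $\Lambda_{[\alpha,\beta]}(n)$ one has $\#\{a:i_{a}=m\}\ge\alpha_{m}$, hence $|\alpha|\le k'$, and dually); thus the orthogonal complement $\sum_{p,q}\iota_{pq}(\mathcal{T}^{k-1,\ell-1})$ has only constituents with positive part of size $\le k-1$. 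Assuming for the moment that this complement contains the \emph{entire} $W_{n}^{[\alpha,\beta]}$-isotypic component whenever $|\alpha|<k$, the $\U(n)$-constituents of $\tkld$ are exactly the $W_{n}^{[\mu,\nu]}$ with $\mu\vdash k$, $\nu\vdash\ell$, and for such $(\mu,\nu)$ the multiplicity of $W_{n}^{[\mu,\nu]}$ in $\tkld$ equals its multiplicity in $\tkl$; the latter, read off from the displayed decomposition, is $\sum_{\mu',\nu'}[\,W_{n}^{[\mu,\nu]}:W_{n}^{[\mu',\emptyset]}\otimes W_{n}^{[\emptyset,\nu']}\,]\,d_{\mu'}d_{\nu'}=d_{\mu}d_{\nu}$, because $W_{n}^{[\mu,\nu]}$ is the top (Cartan) component of $W_{n}^{[\mu',\emptyset]}\otimes W_{n}^{[\emptyset,\nu']}$ precisely when $(\mu',\nu')=(\mu,\nu)$ --- the constraints $|\mu|=|\mu'|=k$, $|\nu|=|\nu'|=\ell$ leaving no room for contracted boxes.

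It remains to identify the $S_{k}\times S_{\ell}$-action on the multiplicity space $M_{\mu,\nu}\eqdf\Hom_{\U(n)}(W_{n}^{[\mu,\nu]},\tkld)$, which has dimension $d_{\mu}d_{\nu}$. This is where the hypothesis $\ell(\mu)+\ell(\nu)\le n$ enters: a highest-weight vector $v$ of a copy of $W_{n}^{[\mu,\emptyset]}$ inside $(\C^{n})^{\otimes k}$ is automatically supported on basis vectors $e_{I}$ with $I\in\{1,\dots,\ell(\mu)\}^{k}$, and a highest-weight vector $\check{v}'$ of a copy of $W_{n}^{[\emptyset,\nu]}$ inside $(\check{\C^{n}})^{\otimes\ell}$ on $\check{e}_{J}$ with $J\in\{n-\ell(\nu)+1,\dots,n\}^{\ell}$; since these index ranges are disjoint, every $c_{pq}$ annihilates $v\otimes\check{v}'$, so $v\otimes\check{v}'\in\tkld$, and it is a $\U(n)$-highest-weight vector of weight $\Lambda_{[\mu,\nu]}(n)$, hence lies in $M_{\mu,\nu}$. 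By classical Schur--Weyl applied to each block, the $S_{k}\times S_{\ell}$-span of such vectors is isomorphic to $V^{\mu}\otimes V^{\nu}$, of dimension $d_{\mu}d_{\nu}$; equality of dimensions gives $M_{\mu,\nu}\cong V^{\mu}\otimes V^{\nu}$, and assembling isotypic components yields the theorem.

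The step I expect to be the crux is the deferred claim that $\sum_{p,q}\iota_{pq}(\mathcal{T}^{k-1,\ell-1})$ absorbs every $W_{n}^{[\alpha,\beta]}$-isotypic component with $|\alpha|<k$ --- equivalently, that $\tkld$ contains no such component; this is the only point where the \emph{length} condition, rather than a size condition, is genuinely used, and where small values of $n$ cause trouble. One clean way to attack it: the positive self-adjoint operator $\sum_{p,q}\iota_{pq}c_{pq}$ on $\tkl$ has kernel exactly $\tkld$, and it equals $\tfrac12(\Omega_{\uparrow}+\Omega_{\downarrow}-\Omega)$ where $\Omega$ is the quadratic Casimir of $\mathfrak{gl}_{n}$ on $\tkl$ and $\Omega_{\uparrow},\Omega_{\downarrow}$ are the (commuting) Casimirs of the two blocks; using the additivity $\kappa_{[\alpha,\beta]}=\kappa_{[\alpha,\emptyset]}+\kappa_{[\emptyset,\beta]}$ of Casimir eigenvalues (valid exactly when $\ell(\alpha)+\ell(\beta)\le n$) one reduces the claim to the strict inequality $\kappa_{[\mu,\emptyset]}+\kappa_{[\emptyset,\nu]}>\kappa_{[\alpha,\beta]}$ over the pairs $(\mu,\nu)$ with $\mu\vdash k$, $\nu\vdash\ell$ for which $W_{n}^{[\alpha,\beta]}$ actually occurs in $W_{n}^{[\mu,\emptyset]}\otimes W_{n}^{[\emptyset,\nu]}$ --- a combinatorial inequality about contents of cells of $\mu\supseteq\alpha$ and $\nu\supseteq\beta$, in which it is precisely the modification rules constraining these pairs for small $n$ that must be handled with care. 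An alternative organising framework is the walled Brauer algebra $B_{k,\ell}(n)$, the full commutant of $\U(n)$ on $\tkl$: there $\tkld$ is the largest submodule annihilated by the ideal generated by the contraction--insertion idempotents, the corresponding quotient of $B_{k,\ell}(n)$ is $\C[S_{k}\times S_{\ell}]$, and the theorem follows by double commutant once one knows that the cell modules of $B_{k,\ell}(n)$ labelled by pairs $(\mu,\nu)$ with $|\mu|=k$, $|\nu|=\ell$ are exactly $V^{\mu}\otimes V^{\nu}$ and are the only ones killed by all contractions.
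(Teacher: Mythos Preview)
The paper does not prove this theorem; it is quoted directly as \cite[Thm.~1.1]{Koike} and used as a black box. So there is no proof in the paper to compare against.

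Your outline is essentially the strategy of Koike's original paper: decompose each block by classical Schur--Weyl, then use the disjointness of the index sets $\{1,\dots,\ell(\mu)\}$ and $\{n-\ell(\nu)+1,\dots,n\}$ (available exactly when $\ell(\mu)+\ell(\nu)\le n$) to exhibit highest-weight vectors of weight $\Lambda_{[\mu,\nu]}(n)$ inside $\tkld$, and identify the $S_k\times S_\ell$-structure of the multiplicity space by a dimension count. The Casimir identity $\sum_{p,q}\iota_{pq}c_{pq}=\tfrac12(\Omega_\uparrow+\Omega_\downarrow-\Omega)$ you invoke is correct, and since each $\iota_{pq}c_{pq}$ is positive semidefinite with kernel $\ker c_{pq}$, the kernel of the sum is indeed $\tkld$; the resulting scalar $\tfrac12(\kappa_{[\mu,\emptyset]}+\kappa_{[\emptyset,\nu]}-\kappa_{[\alpha,\beta]})$ on each block is therefore automatically $\ge 0$, which already gives half of your ``crux''.

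What you leave genuinely open is the strict positivity of that scalar when $(\alpha,\beta)\ne(\mu,\nu)$ --- you reduce it to a content inequality but do not prove it, and you correctly flag that the small-$n$ modification rules are the delicate point. One way to close the gap without chasing contents is to bypass the inequality entirely: your highest-weight construction already embeds $\bigoplus_{\mu,\nu}W_n^{[\mu,\nu]}\otimes V^\mu\otimes V^\nu$ into $\tkld$, so it suffices to show these have the same dimension. This can be done by inducting on $\min(k,\ell)$ via the filtration of $\tkl$ by the images of iterated insertions, or (equivalently) by the walled Brauer algebra route you mention. As written, your argument is a sound and well-organized sketch rather than a complete proof.
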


Next we explain how to construct $\U(n)$-subrepresentations of $\tkld$
isomorphic to $W_{n}^{\mu,\nu}$. Suppose that $\xi\in\tkld$ is a
non-zero vector such that under the isomorphism (\ref{eq:schurweyl-mixed}),
\begin{equation}
\xi\cong w\otimes v\label{eq:pure-tensor-form}
\end{equation}
 for $w\in W_{n}^{\mu,\nu}$ and $v\in V^{\mu}\otimes V^{\nu}$. Then
$\U(n)\cdot\xi$ linearly spans a $\U(n)$-subrepresentation of $\tkld$
isomorphic to $W_{n}^{\mu,\nu}$. The following argument to construct
such a vector $\xi$, given $\mu\vdash k,\nu\vdash\ell$, appears
implicitly in \cite{Koike} and is elaborated in \cite{Benkart}.
For $n\geq\ell(\mu)+\ell(\nu)$ let 
\begin{align}
\tilde{\theta}_{\mu,\nu}^{n}\eqdf & e_{1}^{\otimes\mu_{1}}\otimes\cdots\otimes e_{\ell(\mu)}^{\otimes\mu_{\ell(\mu)}}\otimes\left(\check{e}_{n}\right)^{\otimes\nu_{1}}\otimes\cdots\otimes\left(\check{e}_{n-\ell(\nu)+1}\right)^{\otimes\nu_{\ell(\nu)}}.\label{eq:tilde-theta-def}
\end{align}
This vector is in the $\xi_{\mu,\nu}$-isotypic subspace of $\tkld$
for the maximal torus $\mathbb{T}(n)$ of $\U(n)$ where $\xi_{\mu,\nu}$
is the character of $\mathbb{T}(n)$ corresponding to the highest
weight in $W_{n}^{\mu,\nu}$.

Let $\p_{\mu}\in\C[S_{k}]$, $\p_{\nu}\in\C[S{}_{\ell}]$ be the projections
defined in $\S\S$\ref{subsec:Representation-theory-of-sym-groups}.
Let $\rho_{n}^{k}:S_{k}\to\End(\tkl)$ denote the representation of
$S_{k}$ described above and $\hat{\rho}_{n}^{k}:S{}_{\ell}\to\End(\tkl)$
that of $S_{\ell}$. Clearly these two representations commute. Now
let 
\begin{equation}
\theta_{\mu,\nu}^{n}\eqdf\rho_{n}^{k}(\p_{\mu})\hat{\rho}_{n}^{\ell}(\p_{\nu})\tilde{\theta}_{\mu,\nu}^{n}\in\tkld.\label{eq:theta-def}
\end{equation}
Now this is in the same isotypic subspace for $\mathbb{T}(n)$ as
before since $S_{k}\times S_{\ell}$ commutes with $\U(n)$. Moreover
it is in the subspace of $\tkld$ corresponding to $W_{n}^{\mu,\nu}\otimes V^{\mu}\otimes V^{\nu}$
under the isomorphism (\ref{eq:schurweyl-mixed}). The intersection
of the two subspaces of $\tkld$ just discussed corresponds via (\ref{eq:schurweyl-mixed})
to $\C w\otimes V^{\mu}\otimes V^{\nu}$ where $w$ is a highest weight
vector in $W_{n}^{\mu,\nu}$ and hence $\theta_{\mu,\nu}^{n}$ takes
the form of (\ref{eq:pure-tensor-form}) as we desired.

Of course we also want to know $\theta_{\mu,\nu}^{n}\neq0$. 
\begin{lem}
\label{lem:theta-norm}Suppose that $k,\ell\in\N_{0}$, $\mu\vdash k,\nu\vdash\ell$,
and $\theta_{\mu,\nu}^{n}$ is as in (\ref{eq:theta-def}) for $n\geq\ell(\mu)+\ell(\nu)$.
We have 
\[
\|\theta_{\mu,\nu}^{n}\|^{2}=\frac{d_{\mu}d_{\nu}}{[S_{k}:S_{\mu}][S_{\ell}:S_{\nu}]}.
\]
\end{lem}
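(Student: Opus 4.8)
The plan is to compute $\|\theta_{[\mu,\nu]}^n\|^2$ as an inner product and show it factors through combinatorial quantities attached only to $\mu$ and $\nu$. First I would observe that since $\rho_n^k(\p_\mu)$ and $\hat\rho_n^\ell(\p_\nu)$ are commuting orthogonal projections (the operators $\rho_n^k(\p_\mu)$ and $\hat\rho_n^\ell(\p_\nu)$ are self-adjoint idempotents because $\p_\mu,\p_\nu$ are the self-adjoint central idempotents of $\S\S\ref{subsec:Representation-theory-of-sym-groups}$, and they commute since $S_k$ and $S'_\ell$ act on disjoint tensor legs), we have
\[
\|\theta_{[\mu,\nu]}^n\|^2=\langle \rho_n^k(\p_\mu)\hat\rho_n^\ell(\p_\nu)\tilde\theta_{[\mu,\nu]}^n,\ \tilde\theta_{[\mu,\nu]}^n\rangle.
\]
Expanding $\p_\mu=\frac{d_\mu}{k!}\sum_{\sigma\in S_k}\chi_\mu(\sigma)\sigma$ and similarly for $\p_\nu$, this becomes a double sum over $\sigma\in S_k$, $\tau\in S'_\ell$ of $\chi_\mu(\sigma)\chi_\nu(\tau)$ times $\langle \sigma\tau\,\tilde\theta_{[\mu,\nu]}^n,\tilde\theta_{[\mu,\nu]}^n\rangle$. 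Because $\tilde\theta_{[\mu,\nu]}^n$ is a pure tensor of standard basis vectors, the inner product $\langle \sigma\tau\,\tilde\theta_{[\mu,\nu]}^n,\tilde\theta_{[\mu,\nu]}^n\rangle$ is either $0$ or $1$: it equals $1$ exactly when $\sigma$ permutes only within each block of equal $e_i$'s appearing in $\tilde\theta_{[\mu,\nu]}^n$ — i.e. $\sigma\in S_\mu$, the Young subgroup — and likewise $\tau\in S'_\nu$. Crucially the positions and multiplicities used in $\tilde\theta_{[\mu,\nu]}^n$ (blocks of sizes $\mu_1,\dots,\mu_{\ell(\mu)}$ for the $e$-legs, and $\nu_1,\dots,\nu_{\ell(\nu)}$ for the $\check e$-legs, using distinct indices $1,\dots,\ell(\mu)$ and $n,n-1,\dots$) guarantee this indicator depends only on whether $\sigma$ and $\tau$ lie in the respective Young subgroups, and not on $n$ at all, as long as $n\geq\ell(\mu)+\ell(\nu)$ so that the index sets for $e$-legs and $\check e$-legs are disjoint.

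Putting this together gives the closed form
\[
\|\theta_{[\mu,\nu]}^n\|^2=\frac{d_\mu^2}{(k!)^2}\frac{d_\nu^2}{(\ell!)^2}\Bigl(\sum_{\sigma\in S_\mu}\chi_\mu(\sigma)\Bigr)\Bigl(\sum_{\tau\in S_\nu}\chi_\nu(\tau)\Bigr),
\]
which manifestly does not depend on $n$. It remains to see this is strictly positive. By Lemma \ref{lem:Youngs-rule-application}, $(\triv_{S_\mu},\C)$ occurs with multiplicity exactly one in the restriction of $V^\mu$ to $S_\mu$; hence $\frac{1}{|S_\mu|}\sum_{\sigma\in S_\mu}\chi_\mu(\sigma)=\langle \mathrm{Res}_{S_\mu}\chi_\mu,\triv_{S_\mu}\rangle=1$, so $\sum_{\sigma\in S_\mu}\chi_\mu(\sigma)=|S_\mu|=\mu_1!\cdots\mu_{\ell(\mu)}!>0$, and similarly for $\nu$. (One can also see positivity more softly: $\sum_{\sigma\in S_\mu}\chi_\mu(\sigma)=|S_\mu|\,\operatorname{tr}$ of the projection in $V^\mu$ onto $S_\mu$-invariants, which is a nonnegative integer, and it is nonzero by Lemma \ref{lem:Youngs-rule-application}.) Therefore $\|\theta_{[\mu,\nu]}^n\|^2=\dfrac{d_\mu^2\,|S_\mu|}{(k!)^2}\cdot\dfrac{d_\nu^2\,|S_\nu|}{(\ell!)^2}>0$, independent of $n$. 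The edge cases $k=0$ and/or $\ell=0$ are trivial since then $\tilde\theta$ is a pure tensor on the remaining legs and the corresponding projection is the identity.

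The main obstacle, such as it is, is bookkeeping: one must be careful that the indicator $\langle\sigma\tau\,\tilde\theta,\tilde\theta\rangle$ really is the product of the indicator $[\sigma\in S_\mu]$ with $[\tau\in S'_\nu]$ — this uses that the $e$-legs and $\check e$-legs of $\tilde\theta_{[\mu,\nu]}^n$ involve disjoint sets of basis indices (forced by $n\geq\ell(\mu)+\ell(\nu)$), so that an $S_k$-permutation and an $S'_\ell$-permutation cannot conspire, and that within the $e$-part the blocks use genuinely distinct indices $1,2,\dots,\ell(\mu)$ so a permutation fixes the tensor iff it preserves each block setwise (equivalently, lies in $S_\mu$). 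Once this independence-from-$n$ of the indicator is nailed down, the rest is the above character computation, and the $n$-independence of the norm and its positivity both drop out immediately.
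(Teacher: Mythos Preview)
Your argument is correct and in fact slightly more streamlined than the paper's. The paper expands $\theta$ as a sum over cosets $S_k/S_\mu \times S_\ell/S_\nu$, uses that the vectors $\rho_n^k(\sigma_1)\hat\rho_n^\ell(\sigma_2)\tilde\theta$ for distinct coset representatives are pairwise orthogonal unit vectors, and obtains $\|\theta\|^2$ as a sum of squared coset-coefficients; positivity is then read off from the identity-coset term. You instead use that $P=\rho_n^k(\p_\mu)\hat\rho_n^\ell(\p_\nu)$ is a self-adjoint idempotent to collapse $\|P\tilde\theta\|^2$ to $\langle P\tilde\theta,\tilde\theta\rangle$ and evaluate the resulting single sum via the Young-subgroup indicator, which yields a closed form in one stroke. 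Both routes invoke Lemma~\ref{lem:Youngs-rule-application} for positivity.

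One slip: your displayed closed form has the coefficient squared. Since you already used $P^2=P$ to reduce to $\langle P\tilde\theta,\tilde\theta\rangle$, only one copy of each of $\p_\mu,\p_\nu$ is expanded, so the correct value is
\[
\|\theta_{[\mu,\nu]}^n\|^2=\frac{d_\mu}{k!}\,\frac{d_\nu}{\ell!}\Bigl(\sum_{\sigma\in S_\mu}\chi_\mu(\sigma)\Bigr)\Bigl(\sum_{\tau\in S_\nu}\chi_\nu(\tau)\Bigr)=\frac{d_\mu\,|S_\mu|}{k!}\cdot\frac{d_\nu\,|S_\nu|}{\ell!}.
\]
(For instance, when $\mu=(k)$ and $\nu=\emptyset$ one has $\theta=\tilde\theta=e_1^{\otimes k}$, so $\|\theta\|^2=1$, matching the unsquared coefficient.) This does not affect the conclusions of the lemma. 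As a minor aside, the factorization $\langle\sigma\tau\tilde\theta,\tilde\theta\rangle=[\sigma\in S_\mu]\cdot[\tau\in S'_\nu]$ already follows from $S_k$ and $S'_\ell$ acting on disjoint tensor factors; the disjointness of the index sets used in the $e$- and $\check e$-legs is not actually needed for that step.
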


\begin{proof}
Recall the definition of Young subgroups $S_{\mu},S_{\nu}$ from $\S\S$\ref{subsec:Representation-theory-of-sym-groups}.
Letting $\tilde{\theta}=\tilde{\theta}_{\mu,\nu}^{n}$ (as in (\ref{eq:tilde-theta-def}))
and $\theta=\theta_{\mu,\nu}^{n}$ we have 
\begin{align*}
\theta & =\rho_{n}^{k}(\p_{\mu})\hat{\rho}_{n}^{\ell}(\p_{\nu})\tilde{\theta}=\frac{d_{\mu}d_{\nu}}{k!\ell!}\sum_{\sigma=(\sigma_{1},\sigma_{2})\in S_{k}\times S_{\ell}}\chi_{\mu}(\sigma_{1})\chi_{\nu}(\sigma_{2})\rho_{n}^{k}(\sigma_{1})\hat{\rho}_{n}^{\ell}(\sigma_{2})\tilde{\theta}\\
 & =\frac{d_{\mu}d_{\nu}}{k!\ell!}\sum_{\substack{[\sigma_{1}]\in S_{k}/S_{\mu}\\{}
[\sigma_{2}]\in S_{\ell}/S_{\nu}
}
}\left(\sum_{\tau_{1}\in S_{\mu}}\chi_{\mu}(\sigma_{1}\tau_{1})\right)\left(\sum_{\tau_{2}\in S_{\nu}}\chi_{\nu}(\sigma_{2}\tau_{2})\right)\rho_{n}^{k}(\sigma_{1})\hat{\rho}_{n}^{\ell}(\sigma_{2})\tilde{\theta}.
\end{align*}
The second equality used that $\tilde{\theta}$ is invariant under
$S_{\mu}\times S_{\nu}$. 

By Lemma \ref{lem:Youngs-rule-application}, there is a one dimensional
subspace of invariant vectors for $S_{\mu}$ in $V^{\mu}$. If $v_{\mu}\in V^{\mu}$
is a unit vector in this space then
\begin{equation}
\sum_{\tau_{1}\in S_{\mu}}\chi_{\mu}(\sigma_{1}\tau_{1})=|S_{\mu}|\langle\sigma_{1}v_{\mu},v_{\mu}\rangle.\label{eq:application-of-Young's-rule}
\end{equation}
 Since the vectors $\rho_{n}^{k}(\sigma_{1})\hat{\rho}_{n}^{\ell}(\sigma_{2})\tilde{\theta}$
for $[\sigma_{1}]\in S_{k}/S_{\mu}$ and $[\sigma_{2}]\in S_{\ell}/S_{\nu}$
are orthogonal unit vectors, this gives
\begin{align*}
\|\theta\|^{2} & =\left(\frac{d_{\mu}d_{\nu}}{k!\ell!}\right)^{2}\sum_{\substack{[\sigma_{1}]\in S_{k}/S_{\mu}\\{}
[\sigma_{2}]\in S_{\ell}/S_{\nu}
}
}\left(\sum_{\tau_{1}\in S_{\mu}}\chi_{\mu}(\sigma_{1}\tau_{1})\right)^{2}\left(\sum_{\tau_{2}\in S_{\nu}}\chi_{\nu}(\sigma_{2}\tau_{2})\right)^{2}\\
 & \stackrel{\eqref{eq:application-of-Young's-rule}}{=}\left(\frac{d_{\mu}d_{\nu}}{k!\ell!}\right)^{2}|S_{\mu}|^{2}|S_{\nu}|^{2}\sum_{\substack{[\sigma_{1}]\in S_{k}/S_{\mu}\\{}
[\sigma_{2}]\in S_{\ell}/S_{\nu}
}
}\left|\langle\sigma_{1}v_{\mu},v_{\mu}\rangle\right|^{2}\left|\langle\sigma_{2}v_{\nu},v_{\nu}\rangle\right|^{2}\\
 & =\left(\frac{d_{\mu}d_{\nu}}{k!\ell!}\right)^{2}|S_{\mu}||S_{\nu}|\sum_{\substack{\sigma_{1}\in S_{k}\\
\sigma_{2}\in S_{\ell}
}
}\left|\langle\sigma_{1}v_{\mu},v_{\mu}\rangle\right|^{2}\left|\langle\sigma_{2}v_{\nu},v_{\nu}\rangle\right|^{2}=\frac{d_{\mu}d_{\nu}}{[S_{k}:S_{\mu}][S_{\ell}:S_{\nu}]}.
\end{align*}
The last inequality used the orthogonality relations for matrix coefficients.
\end{proof}
Recall that we write $\pi_{n}^{k,\ell}:\U(n)\to\End(\tkl)$ for the
diagonal representation of $\U(n)$ on $\tkl$. Lemma \ref{lem:theta-norm}
implies that $\theta_{\mu,\nu}^{n}$ is a non-zero vector. By the
remarks following (\ref{eq:theta-def}) it is of the pure tensor form
$w\otimes v$ under the Schur-Weyl isomorphism (\ref{eq:schurweyl-mixed}),
with $w\in W_{n}^{\mu,\nu}$, and hence we obtain the following corollary.
\begin{cor}
\label{cor:subspace-construct}Suppose $n\geq\ell(\mu)+\ell(\nu)$.
The subspace 
\[
W_{n}(\theta_{\mu,\nu}^{n})\eqdf\mathrm{span}\{\pi_{n}^{k,\ell}(u)\theta_{\mu,\nu}^{n}\,:u\in\U(n)\,\}\subset\tkld
\]
 is, under $\pi_{n}^{k,\ell}$, a $\U(n)$-subrepresentation of $\tkld$
isomorphic to $W_{n}^{\mu,\nu}.$
\end{cor}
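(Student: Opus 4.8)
The plan is to deduce Corollary \ref{cor:subspace-construct} from what has already been set up, with almost no new computation. By the discussion surrounding \eqref{eq:pure-tensor-form} and \eqref{eq:theta-def}, the vector $\theta_{[\mu,\nu]}^{n}$ lies in $\tkld$, lies in the $\xi_{[\mu,\nu]}$-isotypic subspace for $\mathbb{T}(n)$ (because it is obtained from $\tilde\theta_{[\mu,\nu]}^{n}$, which has weight $\xi_{[\mu,\nu]}$, by applying operators that commute with $\mathbb{T}(n)$), and lies in the summand of $\tkld$ corresponding to $W_{n}^{[\mu,\nu]}\otimes V^{\mu}\otimes V^{\nu}$ under Koike's isomorphism \eqref{eq:schurweyl-mixed} (because $\rho_{n}^{k}(\p_{\mu})\hat\rho_{n}^{\ell}(\p_{\nu})$ is the projection onto the $V^{\mu}\otimes V^{\nu}$-isotypic part for $S_k\times S_\ell$). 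The intersection of these two subspaces corresponds under \eqref{eq:schurweyl-mixed} to $\C w\otimes V^{\mu}\otimes V^{\nu}$, where $w$ is a highest weight vector in $W_{n}^{[\mu,\nu]}$, so $\theta_{[\mu,\nu]}^{n}$ has the pure-tensor form \eqref{eq:pure-tensor-form}, i.e. $\theta_{[\mu,\nu]}^{n}\cong w\otimes v$ for some $v\in V^{\mu}\otimes V^{\nu}$.

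Next I would invoke Lemma \ref{lem:theta-norm}, which guarantees $\theta_{[\mu,\nu]}^{n}\neq 0$; hence $w\neq 0$ and $v\neq 0$ in the decomposition $\theta_{[\mu,\nu]}^{n}\cong w\otimes v$. Now the key observation is that under the isomorphism \eqref{eq:schurweyl-mixed} the action of $\U(n)$ is $\pi_n^{k,\ell}(u)(w'\otimes v')=(\rho_n^{[\mu,\nu]}(u)w')\otimes v'$ on the summand $W_n^{[\mu,\nu]}\otimes V^{\mu}\otimes V^{\nu}$, so
\[
\mathrm{span}\{\pi_{n}^{k,\ell}(u)\theta_{[\mu,\nu]}^{n}:u\in\U(n)\}\cong \mathrm{span}\{\rho_n^{[\mu,\nu]}(u)w:u\in\U(n)\}\otimes v.
\]
Because $W_n^{[\mu,\nu]}$ is irreducible and $w\neq 0$, the span $\mathrm{span}\{\rho_n^{[\mu,\nu]}(u)w:u\in\U(n)\}$ is all of $W_n^{[\mu,\nu]}$. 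Therefore $W_n(\theta_{[\mu,\nu]}^{n})\cong W_n^{[\mu,\nu]}\otimes \C v$ as a $\U(n)$-representation, and since $\C v$ carries the trivial $\U(n)$-action this is isomorphic to $W_n^{[\mu,\nu]}$, which is exactly the claim.

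There is essentially no main obstacle here: the corollary is a packaging of facts already proved, and the only thing to be careful about is bookkeeping — making sure that "same isotypic subspace for $\mathbb{T}(n)$" plus "same $S_k\times S_\ell$-isotypic subspace" really pins the vector down to $\C w\otimes V^{\mu}\otimes V^{\nu}$ inside the multiplicity-one occurrence of the highest weight $\xi_{[\mu,\nu]}$ in $W_n^{[\mu,\nu]}$ (this uses the highest weight theory statement recalled in Subsection \ref{subsec:Representation-theory-of-U(n)-SU(n)} that the $\xi_\Lambda$-isotypic subspace of $W^\Lambda$ is one-dimensional). If one wanted to be maximally self-contained one could note the case $k=0$ or $\ell=0$ separately, where $\tkld=\tkl$ and $\theta_{[\mu,\nu]}^{n}$ reduces to the classical (polynomial or anti-polynomial) Schur–Weyl construction; but the general argument covers these uniformly.
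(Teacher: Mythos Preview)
Your proof is correct and follows essentially the same approach as the paper: the paper simply says that ``by the remarks surrounding \eqref{eq:pure-tensor-form}, Lemma \ref{lem:theta-norm} implies the following corollary,'' and you have spelled out exactly those remarks (pure-tensor form via highest-weight and isotypic considerations, nonvanishing from Lemma \ref{lem:theta-norm}, and irreducibility of $W_n^{[\mu,\nu]}$ to identify the span).
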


\subsection{The Weingarten calculus\label{subsec:The-Weingarten-calculus}}

The Weingarten calculus is a method based on Schur--Weyl duality
that allows one to calculate integrals of products of matrix coefficients
in the defining representation of $\U(n)$ in terms of sums over permutations.
It was discovered initially by Weingarten \cite{weingarten1978asymptotic},
and developed further in works of Xu, Collins, and Collins--Śniady
in \cite{xu1997random,collins2003moments,CS}. 

We present two formulations of the Weingarten calculus. Given $k\in\N$,
$n\in\N$, the \emph{Weingarten function }with parameters $n,k$ is
the following element\footnote{Although not relevant here, classically the Weingarten function arises
as the multiplicative inverse of $\sum_{\sigma\in S_{k}}n^{\text{\#}\mathrm{cycles}(\sigma)}\sigma$
in $\C[S_{k}],$whenever $n\geq k$.} of $\C[S_{k}]$ \cite[eq. (9)]{CS}
\begin{equation}
\Wg_{n,k}\eqdf\frac{1}{(k!)^{2}}\sum_{\substack{\lambda\vdash k\\
\ell(\lambda)\leq n
}
}\frac{d_{\lambda}^{2}}{D_{\lambda}(n)}\sum_{\sigma\in S_{k}}\chi_{\lambda}(\sigma)\sigma.\label{eq:Wg-def}
\end{equation}
We write $\Wg_{n,k}(\sigma)$ for the coefficient of $\sigma$ in
(\ref{eq:Wg-def}). The following theorem was proved by Collins and
Śniady \cite[Cor. 2.4]{CS}.
\begin{thm}
\label{thm:Wg-vanilla}For $k\in\N$ and for $i_{1},i'_{1},j_{k},j'_{k},\ldots,i_{k},i'_{k},j_{k},j'_{k}\in[n]$
\begin{align}
 & \int_{u\in\U(n)}u_{i_{1}j_{1}}\cdots u_{i_{k}j_{k}}\overline{u_{i'_{1}j'_{1}}}\cdots\overline{u_{i'_{k}j'_{k}}}d\mu(u)\label{eq:wg-int}\\
= & \sum_{\sigma,\tau\in S_{k}}\delta_{i_{1}i'_{\sigma(1)}}\cdots\delta_{i_{k}i'_{\sigma(k)}}\delta_{j_{1}j'_{\tau(1)}}\cdots\delta_{j_{k}j'_{\tau(k)}}\Wg_{n,k}(\tau\sigma^{-1}),\nonumber 
\end{align}
where $\delta_{pq}$ is the Kronecker delta function.
\end{thm}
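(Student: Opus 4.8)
The plan is to realise the left-hand side of (\ref{eq:wg-int}) as a single matrix coefficient of the orthogonal projection onto the algebra of $\U(n)$-invariants inside $\End((\C^{n})^{\otimes k})$, and then to identify that projection by combining Schur--Weyl duality with the spectral formula (\ref{eq:Wg-def}) for $\Wg_{n,k}$.

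First I would rewrite the integrand. Set $I=(i_{1},\dots,i_{k})$, $I'=(i'_{1},\dots,i'_{k})$, $J=(j_{1},\dots,j_{k})$, $J'=(j'_{1},\dots,j'_{k})$, and let $A\in\End((\C^{n})^{\otimes k})$ be the elementary operator $Ae_{M}\eqdf\delta_{M,J'}\,e_{J}$ on basis vectors $e_{M}$. Expanding $u^{\otimes k}A(u^{\otimes k})^{*}$ in the standard basis gives
\[
u_{i_{1}j_{1}}\cdots u_{i_{k}j_{k}}\;\bar u_{i'_{1}j'_{1}}\cdots\bar u_{i'_{k}j'_{k}}=\big\langle\,u^{\otimes k}A(u^{\otimes k})^{*}e_{I'},\,e_{I}\,\big\rangle .
\]
Integrating in $u$, I would use that $B\mapsto\int_{\U(n)}u^{\otimes k}B(u^{\otimes k})^{*}\,d\mu_{\U(n)}^{\mathrm{Haar}}(u)$ is the orthogonal projection of $\End((\C^{n})^{\otimes k})$ — for the Hilbert--Schmidt inner product $\langle B,C\rangle\eqdf\Tr(BC^{*})$ — onto the fixed algebra of the conjugation action of $\U(n)$, and that by the classical Schur--Weyl duality (the $\ell=0$ case of Theorem \ref{thm:schur-weyl-mixed}) this algebra is $\A\eqdf\rho_{n}^{k}(\C[S_{k}])=\mathrm{span}\{\rho_{n}^{k}(\sigma):\sigma\in S_{k}\}$. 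Thus (\ref{eq:wg-int}) equals $\langle P(A)e_{I'},e_{I}\rangle$ with $P\colon\End((\C^{n})^{\otimes k})\to\A$ the Hilbert--Schmidt orthogonal projection.

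Then I would compute $P$ through its normal equations. Because $\langle\rho_{n}^{k}(\tau),\rho_{n}^{k}(\sigma)\rangle=\Tr(\rho_{n}^{k}(\tau\sigma^{-1}))=n^{c(\tau\sigma^{-1})}$, where $c(\pi)$ is the number of cycles of $\pi$, writing $P(A)=\sum_{\sigma}x_{\sigma}\rho_{n}^{k}(\sigma)$ the vector $x$ satisfies $\sum_{\tau}n^{c(\tau\sigma^{-1})}x_{\tau}=\langle A,\rho_{n}^{k}(\sigma)\rangle$ for all $\sigma$; in $\C[S_{k}]$ this is the single identity $P_{n,k}\cdot x=b$, with $P_{n,k}\eqdf\sum_{\pi\in S_{k}}n^{c(\pi)}\pi$, $x\eqdf\sum_{\sigma}x_{\sigma}\sigma$, and $b\eqdf\sum_{\sigma}\langle A,\rho_{n}^{k}(\sigma)\rangle\sigma$. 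The key point is that $\Wg_{n,k}$ is the pseudo-inverse of $P_{n,k}$: taking the trace of $\rho_{n}^{k}(\pi)$ in the same Schur--Weyl decomposition gives $n^{c(\pi)}=\sum_{\ell(\lambda)\le n}D_{\lambda}(n)\chi_{\lambda}(\pi)$, hence $P_{n,k}=k!\sum_{\ell(\lambda)\le n}\frac{D_{\lambda}(n)}{d_{\lambda}}\p_{\lambda}$, while (\ref{eq:Wg-def}) reads $\Wg_{n,k}=\frac{1}{k!}\sum_{\ell(\lambda)\le n}\frac{d_{\lambda}}{D_{\lambda}(n)}\p_{\lambda}$, so that $P_{n,k}\cdot\Wg_{n,k}=\sum_{\ell(\lambda)\le n}\p_{\lambda}$, the identity of the quotient algebra $\A\cong\C[S_{k}]/\ker\rho_{n}^{k}$. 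Therefore $x=\Wg_{n,k}\cdot b$, i.e. $x_{\sigma}=\sum_{\tau}\Wg_{n,k}(\sigma\tau^{-1})\langle A,\rho_{n}^{k}(\tau)\rangle$.

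Finally I would evaluate the remaining coefficients from the permutation action: $\langle A,\rho_{n}^{k}(\tau)\rangle=\langle\rho_{n}^{k}(\tau)^{*}e_{J},e_{J'}\rangle=\prod_{q}\delta_{j_{q}j'_{\tau(q)}}$ and $\langle\rho_{n}^{k}(\sigma)e_{I'},e_{I}\rangle=\prod_{p}\delta_{i_{p}i'_{\sigma(p)}}$ — up to the harmless substitutions $\sigma\mapsto\sigma^{-1}$, $\tau\mapsto\tau^{-1}$ forced by one's convention for $\rho_{n}^{k}$ — and substitute into $\langle P(A)e_{I'},e_{I}\rangle=\sum_{\sigma}x_{\sigma}\langle\rho_{n}^{k}(\sigma)e_{I'},e_{I}\rangle$; since $\Wg_{n,k}$ is a class function invariant under inversion, $\Wg_{n,k}(\sigma\tau^{-1})=\Wg_{n,k}(\tau\sigma^{-1})$, and the double sum is exactly the one in the statement. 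The hard part is the regime $n<k$: there the $\rho_{n}^{k}(\sigma)$ are linearly dependent, $P_{n,k}$ is not invertible in $\C[S_{k}]$, and one must check that $b$ lies in the range of multiplication by $P_{n,k}$ — equivalently, that $b$ is annihilated by the complementary central idempotent $\sum_{\ell(\lambda)>n}\p_{\lambda}$ — so that $x=\Wg_{n,k}\cdot b$ genuinely solves the normal equations and $P(A)=\sum_{\sigma}x_{\sigma}\rho_{n}^{k}(\sigma)$ does not depend on the choice of $x$ modulo $\ker\rho_{n}^{k}$. This holds because the linear functional $y\mapsto\langle A,\rho_{n}^{k}(y)\rangle$ defining $b$ kills $\ker\rho_{n}^{k}$, on whose complement $\sum_{\ell(\lambda)\le n}\p_{\lambda}$ acts as the identity; getting this bookkeeping right, rather than the Kronecker-delta manipulation, is the real content beyond the outline above.
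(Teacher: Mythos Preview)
Your argument is correct and is essentially the Collins--\'Sniady proof that the paper cites but does not reproduce: the paper states Theorem \ref{thm:Wg-vanilla} without proof, attributing it to \cite[Cor.~2.4]{CS}, and separately records the abstract projection formula $P_{n,k}[A]=\rho_{n}^{k}(\Phi[A]\cdot\Wg_{n,k})$ as Proposition \ref{prop:Weingarten}, also from \cite{CS}. What you have written is precisely a derivation of Proposition \ref{prop:Weingarten} --- your $b$ is the paper's $\Phi[A]$ up to the $\sigma\mapsto\sigma^{-1}$ convention, and your pseudo-inverse identity $P_{n,k}\cdot\Wg_{n,k}=\sum_{\ell(\lambda)\le n}\p_{\lambda}$ is exactly the mechanism behind it --- followed by the specialisation to the rank-one $A=e_{J}\otimes\check e_{J'}$ that turns it into the matrix-coefficient identity (\ref{eq:wg-int}). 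Your treatment of the degenerate regime $n<k$, checking that $b$ lies in $\bigoplus_{\ell(\lambda)\le n}\p_{\lambda}\C[S_{k}]$ because the functional $y\mapsto\langle A,\rho_{n}^{k}(y)\rangle$ annihilates $\ker\rho_{n}^{k}$, is the right way to handle the linear dependence of the $\rho_{n}^{k}(\sigma)$ and matches the care taken in \cite{CS}.
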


It is sometimes more flexible to reformulate Theorem \ref{thm:Wg-vanilla}
in terms of projections. Here $u\in\U(n)$ acts on $A\in\End((\C^{n})^{\otimes k})$
by $A\mapsto\pi_{n}^{k}(u)A\pi_{n}^{k}(u^{-1})$, $\pi_{n}^{k}:\U(n)\to\End((\C^{n})^{\otimes k})$
the diagonal action. Write $P_{n,k}$ for the orthogonal projection
in $\End((\C^{n})^{\otimes k})$ onto the $\U(n)$-invariant vectors.
The following proposition is due to Collins and Śniady \cite[Prop. 2.3]{CS}.
\begin{prop}[Collins--Śniady]
\label{prop:Weingarten}Let $n,k\in\N$. Suppose $A\in\End((\C^{n})^{\otimes k})$.
Then
\[
P_{n,k}[A]=\rho_{n}^{k}\left(\Phi[A]\cdot\Wg_{n,k}\right)
\]
where 
\[
\Phi[A]\eqdf\sum_{\sigma\in S_{k}}\tr(A\rho_{n}^{k}(\sigma^{-1}))\sigma.
\]
\end{prop}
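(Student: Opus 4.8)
The plan is to deduce the projection formula from the coordinate-wise Weingarten formula of Theorem \ref{thm:Wg-vanilla} by expanding everything in the standard basis $\{e_I : I \in [n]^k\}$ of $(\C^n)^{\otimes k}$. First I would recall that the orthogonal projection onto the $\U(n)$-invariants is given by averaging over the group: $P_{n,k}[A] = \int_{u \in \U(n)} \pi_n^k(u) A\, \pi_n^k(u^{-1})\, d\mu^{\mathrm{Haar}}_{\U(n)}(u)$. Writing $A$ in coordinates as $A = \sum_{I,J} A_{IJ}\, e_I \check e_J$ (thinking of $\End((\C^n)^{\otimes k}) \cong (\C^n)^{\otimes k} \otimes (\check{\C^n})^{\otimes k}$), the matrix entry of $\pi_n^k(u) A\, \pi_n^k(u^{-1})$ between $e_{I'}$ and $\check e_{J'}$ is $\sum_{I,J} A_{IJ}\, u_{i'_1 i_1}\cdots u_{i'_k i_k}\, \bar u_{j'_1 j_1}\cdots \bar u_{j'_k j_k}$, so that applying Theorem \ref{thm:Wg-vanilla} with the roles of the indices matched appropriately gives
\[
\bigl(P_{n,k}[A]\bigr)_{I'J'} = \sum_{I,J}\sum_{\sigma,\tau \in S_k} A_{IJ}\, \delta_{i'_1 i_{\sigma(1)}}\cdots\delta_{i'_k i_{\sigma(k)}}\, \delta_{j'_1 j_{\tau(1)}}\cdots\delta_{j'_k j_{\tau(k)}}\, \Wg_{n,k}(\tau\sigma^{-1}).
\]

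Next I would interpret the two Kronecker-delta products as the matrix entries of the permutation operators $\rho_n^k(\sigma)$ and $\rho_n^k(\tau)$ (being careful about the convention: $\rho_n^k(\sigma)$ sends $e_I$ to $e_{\sigma \cdot I}$, and the delta $\prod_p \delta_{i'_p i_{\sigma(p)}}$ expresses exactly that $I'$ is the $\sigma$-permutation of $I$). Contracting the $J$-sum against $\prod_p \delta_{j'_p j_{\tau(p)}}$ replaces $J$ by $\tau^{-1} \cdot J'$ (or $\tau \cdot J'$, depending on convention), and likewise the $I$-sum fixes $I = \sigma^{-1}\cdot I'$. The remaining double sum over $\sigma, \tau$ then reorganizes: substituting $\tau = \pi \sigma$ and summing over $\sigma$ first collapses the trace $\sum_{I,J} A_{IJ}(\cdots)$ into $\tr(A \rho_n^k(\sigma^{-1})) = \Phi[A]$ evaluated at the appropriate permutation, and what is left acting on $e_{I'}\check e_{J'}$ is precisely $\rho_n^k(\Phi[A]\cdot \Wg_{n,k})$ once one checks that $\Phi[A]\cdot\Wg_{n,k}$ is the convolution product in $\C[S_k]$ and that $\rho_n^k$ is an algebra homomorphism. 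I would carry this out by fixing output indices $I', J'$, doing the $\sigma$-sum to produce $\tr(A\rho_n^k(\text{something}))$, and doing the $\tau$-sum to produce the Weingarten coefficient, matching term by term with the definition of the convolution $\Phi[A]\cdot\Wg_{n,k} = \sum_{\pi} \bigl(\sum_{\sigma} \tr(A\rho_n^k(\sigma^{-1}))\Wg_{n,k}(\sigma^{-1}\pi)\bigr)\pi$.

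The main obstacle is bookkeeping the permutation conventions consistently: Theorem \ref{thm:Wg-vanilla} pairs primed and unprimed $i$-indices via $\sigma$ and $j$-indices via $\tau$ with the Weingarten weight $\Wg_{n,k}(\tau\sigma^{-1})$, and one must check that under the chosen convention for $\rho_n^k$ (permuting tensor factors) this really does assemble into the \emph{algebra} product $\Phi[A]\cdot\Wg_{n,k}$ rather than its opposite, and that no stray inverse or transpose creeps in from the dual-space factor $(\check{\C^n})^{\otimes k}$. A clean way to sidestep sign-of-permutation ambiguities is to verify the identity first on the basis of rank-one operators $A = \rho_n^k(\alpha)$ for $\alpha \in S_k$ (where both sides are computable directly and the general case follows by linearity, since such operators span the commutant $\rho_n^k(\C[S_k]) = \End_{\U(n)}((\C^n)^{\otimes k})$ by Schur--Weyl duality, and $P_{n,k}$ lands in this commutant). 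Checking $A = \rho_n^k(\alpha)$ reduces the claim to the identity $\Phi[\rho_n^k(\alpha)]\cdot\Wg_{n,k} = P_{n,k}$-image expressed via characters, which is exactly the content of the definition \eqref{eq:Wg-def} of $\Wg_{n,k}$ together with the fact that $\rho_n^k(\p_\lambda)$ projects onto the $W_n^\lambda$-isotypic part; this is essentially the original argument of Collins--Śniady and I would simply reproduce it.
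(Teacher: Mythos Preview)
The paper does not prove this proposition; it is stated and attributed to Collins--\'Sniady \cite[Prop.~2.3]{CS} without proof. Your proposal to derive it from the coordinate form of the Weingarten integral (Theorem~\ref{thm:Wg-vanilla}) by expanding $P_{n,k}[A]=\int \pi_n^k(u)A\pi_n^k(u^{-1})\,d\mu(u)$ in matrix entries is correct and is essentially the original argument, as you yourself note at the end.

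One small point about your proposed shortcut: saying ``the general case follows by linearity'' from the case $A=\rho_n^k(\alpha)$ is not quite right as stated, since the operators $\rho_n^k(\alpha)$ do not span $\End((\C^n)^{\otimes k})$. What makes the reduction valid is that \emph{both} sides of the identity depend only on the Hilbert--Schmidt projection of $A$ onto $\rho_n^k(\C[S_k])$: the left side because $P_{n,k}$ is exactly that orthogonal projection (the $\U(n)$-invariants in $\End((\C^n)^{\otimes k})$ being the commutant $\rho_n^k(\C[S_k])$ by Schur--Weyl), and the right side because $\Phi[A]=\sum_\sigma\langle A,\rho_n^k(\sigma)\rangle_{\mathrm{HS}}\,\sigma$ vanishes on the orthogonal complement of that commutant. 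With this observation your shortcut goes through, and checking on $A=\rho_n^k(\alpha)$ then amounts to the fact that $\Wg_{n,k}$ is a convolution inverse to $\Phi[\rho_n^k(\cdot)]$ on $\C[S_k]$, which is precisely what the spectral definition \eqref{eq:Wg-def} encodes.
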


Later we will need the following bound for the Weingarten function
due to Collins and Śniady \cite[Prop. 2.6]{CS}. For a permutation
$\sigma$ let $|\sigma|$ denote the minimum number of transpositions
that $\sigma$ can be written as a product of.
\begin{prop}
\label{prop:wg-bound}For any fixed $\sigma\in S_{k}$, $\Wg_{n,k}(\sigma)\ll_{k}n^{-k-|\sigma|}$
as $n\to\infty$.
\end{prop}

\subsection{Free groups and surface groups\label{subsec:Free-groups-and-surface-groups}}

Let $\F_{2g}\eqdf\langle a_{1},b_{1},\ldots,a_{g},b_{g}\rangle$ be
the free group on $2g$ generators $a_{1},b_{1},\ldots,a_{g},b_{g}$
and $R_{g}\eqdf[a_{1},b_{1}]\cdots[a_{g},b_{g}]\in\F_{2g}$. There
is a quotient map $\F_{2g}\to\Gamma_{g}$ given by reduction modulo
$R_{g}$. We say that $w\in\F_{2g}$ \emph{represents the conjugacy
class of $\gamma\in\Gamma_{g}$ }if the projection of $w$ to $\Gamma_{g}$
is in the conjugacy class of $\gamma$ in $\Gamma_{g}$. 

Given $w\in\F_{2g}$, we view $w$ as a combinatorial word in\linebreak{}
$a_{1},a_{1}^{-1},b_{1},b_{1}^{-1},\ldots,a_{g},a_{g}^{-1},b_{g},b_{g}^{-1}$
by writing it in reduced (shortest) form; i.e., $a_{1}$ does not
follow $a_{1}^{-1}$ etc. We say that $w$ is \emph{cyclically reduced}
if the first letter of its reduced word is not the inverse of the
last letter. The length $|w|$ of $w\in\F_{2g}$ is the length of
its reduced form word. We say $w\in\F_{2g}$ is a \emph{shortest element}
representing the conjugacy class of $\gamma\in\Gamma_{g}$ if it has
minimal length among all elements representing the conjugacy class
of $\gamma$. If $w$ is a shortest element representing some conjugacy
class in $\Gamma_{g}$ then $w$ is cyclically reduced.

For any group $H$, the commutator subgroup $[H,H]\leq H$ is the
subgroup generated by all elements of the form $[h_{1},h_{2}]\eqdf h_{1}h_{2}h_{1}^{-1}h_{2}^{-1}$
with $h_{1},h_{2}\in H$. If $\gamma\in[\Gamma_{g},\Gamma_{g}]$,
and $w$ represents the conjugacy class of $\gamma$, then $w\in[\F_{2g},\F_{2g}]$
(see \cite[\S 2.6]{MageeRURSG1}).

\subsection{Witten zeta functions\label{subsec:Witten-zeta-functions}}

Witten zeta functions appeared first in Witten's work \cite{Witten1991}
and were named by Zagier in \cite{Zagier}. The \emph{Witten zeta
function of $\SU(n)$ }is defined, for $s$ in a half-plane of convergence,
by 
\begin{equation}
\zeta(s;n)\eqdf\sum_{\substack{\text{(\ensuremath{\rho,W)\in\widehat{\SU(n)}}}}
}\frac{1}{(\dim W)^{s}}\label{eq:witten-def}
\end{equation}
where $\widehat{\SU(n)}$ denotes the equivalence classes of irreducible
representations of $\SU(n)$. Indeed, the series (\ref{eq:witten-def})
converges for $\Re(s)>\frac{2}{n}$ by a result of Larsen and Lubotzky
\cite[Thm. 5.1]{LL} (see also \cite[\S 2]{HS}). Also relevant to
this work is a result of Guralnick, Larsen, and Manack \cite[Thm 2., also eq. (7)]{GLM}
that states for fixed $s>0$
\begin{equation}
\lim_{n\to\infty}\zeta(s;n)=1.\label{eq:witten-limit}
\end{equation}

\subsection{Results of the prequel paper\label{subsec:Results-of-the}}

By \cite[Prop. 1.5]{MageeRURSG1}, if $\gamma\notin[\Gamma_{g},\Gamma_{g}]$,
then $\E_{g,n}[\tr_{\gamma}]=0$ for $n\ge n_{0}(\gamma)$. This proves
Theorem \ref{thm:main-convergence} in this case. \emph{Hence in the
rest of the paper we need only consider $\gamma\in[\Gamma_{g},\Gamma_{g}]$
and hence $w\in[\F_{2g},\F_{2g}]$ if $w\in\F_{2g}$ represents the
conjugacy class of $\gamma$.}

For each $w\in\F_{2g}$, we have a \emph{word map $w:\U(n)^{2g}\to\U(n)$
}obtained by substituting matrices for the generators of $\F_{2g}$.
For example, if \linebreak{}
$u_{1},v_{1},\ldots,u_{g},v_{g}\in\U(n)$ then $R_{g}(u_{1},v_{1},\ldots,u_{g},v_{g})=[u_{1},v_{1}]\cdots[u_{g},v_{g}]$.
We begin with the following result from the prequel paper \cite[Cor. 1.8]{MageeRURSG1}.
\begin{prop}
\label{prop:prequel-result}Suppose that $g\geq2$, $\gamma\in\Gamma_{g}$,
and $w\in\F_{2g}$ represents the conjugacy class of $\gamma$. For
any $B\in\N$ we have as $n\to\infty$
\begin{align}
\E_{g,n}[\tr_{\gamma}] & =\zeta(2g-2;n)^{-1}\sum_{\substack{\mu,\nu\,\mathrm{Young}\mathrm{\thinspace diagrams}\text{}\\
\text{\text{\ensuremath{\ell(\mu),\ell(\nu)\leq B,\mu_{1},\nu_{1}\leq B^{2}}}}
}
}D_{\mu,\nu}(n)\,\mathcal{I}_{n}(w,\mu,\nu)\label{eq:first-formula}\\
 & \thinspace\thinspace\thinspace\thinspace\thinspace\thinspace\thinspace\thinspace\thinspace\thinspace+O_{B,w,g}\left(n^{|w|}n^{-2\log B}\right).\nonumber 
\end{align}
where 
\begin{equation}
\mathcal{I}_{n}(w,\mu,\nu)\eqdf\int_{\SU(n)^{2g}}\tr(w(x))\overline{s_{\mu,\nu}(R_{g}(x)))}d\mu(x).\label{eq:I-def}
\end{equation}
\end{prop}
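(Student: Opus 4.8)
The plan is to prove \eqref{eq:first-formula} in two movements: first an \emph{exact} Frobenius--Witten-type identity rewriting $\E_{g,n}[\tr_\gamma]$ as a convergent series over $\widehat{\SU(n)}$, and then a truncation of that series to the box $\mathcal{B}_B\eqdf\{(\mu,\nu):\ell(\mu),\ell(\nu)\le B,\ \mu_1,\nu_1\le B^2\}$, with the discarded tail controlled by a Weingarten bound on $\mathcal{I}_n(w,[\mu,\nu])$.

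\textbf{The exact identity.} I would first invoke Goldman's description of the Atiyah--Bott--Goldman measure (\cite{AB,GOLDMANSYMPLECTIC}; see also \S\S2.7 of \cite{MageeRURSG1}): the Liouville measure $\mathrm{Vol}_{\M_{g,n}}$ is, up to a positive constant $C_n$, the measure on $\M_{g,n}=\Hom(\Gamma_g,\SU(n))^{\irr}/\PSU(n)$ obtained by descending Haar measure on $\SU(n)^{2g}$ conditioned on the fibre $R_g(x)=\id$ of the word map $R_g\colon\SU(n)^{2g}\to\SU(n)$. The reducible locus is a proper subvariety, hence null, and the $\PSU(n)$-orbit volume (constant on the free irreducible locus) cancels, together with $C_n$, between the numerator and denominator of $\E_{g,n}$. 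Fourier-expanding the delta distribution at $\id$ on $\SU(n)$ as $\delta_{\id}=\sum_{\rho\in\widehat{\SU(n)}}(\dim\rho)\chi_\rho$ and pairing against the conjugation-invariant function $\tr\circ w$ then gives
\[
\E_{g,n}[\tr_\gamma]=\frac{\sum_{\rho}(\dim\rho)\int_{\SU(n)^{2g}}\tr(w(x))\,\overline{\chi_\rho(R_g(x))}\,d\mu^{\mathrm{Haar}}_{\SU(n)^{2g}}(x)}{\sum_{\rho}(\dim\rho)\int_{\SU(n)^{2g}}\overline{\chi_\rho(R_g(x))}\,d\mu^{\mathrm{Haar}}_{\SU(n)^{2g}}(x)}.
\]
Iterating Schur's identity $\int_{\SU(n)}\rho(aga^{-1})\,da=\tfrac{\chi_\rho(g)}{\dim\rho}\Id$ over the $g$ commutators making up $R_g$ shows $\int_{\SU(n)^{2g}}\overline{\chi_\rho(R_g(x))}\,d\mu^{\mathrm{Haar}}_{\SU(n)^{2g}}(x)=(\dim\rho)^{1-2g}$, so the denominator is exactly $\sum_\rho(\dim\rho)^{2-2g}=\zeta(2g-2;n)$. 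Parametrising $\widehat{\SU(n)}$ by pairs of Young diagrams $(\mu,\nu)$ as in \S\S\ref{subsec:Representation-theory-of-U(n)-SU(n)} (for each class take the representative with $\ell(\mu)+\ell(\nu)$ minimal, so $\chi_\rho=s_{[\mu,\nu]}$ and $\dim\rho=D_{[\mu,\nu]}(n)$), the $\rho$-summand of the numerator is exactly $D_{[\mu,\nu]}(n)\,\mathcal{I}_n(w,[\mu,\nu])$ with $\mathcal{I}_n$ as in \eqref{eq:I-def}.

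\textbf{Truncation.} For $n>2B$ all pairs in $\mathcal{B}_B$ satisfy $\ell(\mu)+\ell(\nu)\le 2B<n$ and index pairwise inequivalent $\SU(n)$-representations (any other representative of a given class has $\ell(\mu)$ or $\ell(\nu)$ of order $n$), so $\sum_{(\mu,\nu)\in\mathcal{B}_B}D_{[\mu,\nu]}(n)\mathcal{I}_n(w,[\mu,\nu])$ is an honest partial sum of the above series. The crux is then a bound of the shape
\[
|\mathcal{I}_n(w,[\mu,\nu])|\ \ll_{w,g}\ n^{|w|}\,D_{[\mu,\nu]}(n)^{-(2g-2)}
\]
uniformly once $(\mu,\nu)$ leaves a fixed finite box. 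Granting it, and using $\zeta(2g-2;n)\to1$ from \eqref{eq:witten-limit} and $D_{[\mu,\nu]}(n)\asymp n^{|\mu|+|\nu|}$ from \eqref{eq:dimension-asymp},
\[
\zeta(2g-2;n)^{-1}\sum_{(\mu,\nu)\notin\mathcal{B}_B}D_{[\mu,\nu]}(n)\,\mathcal{I}_n(w,[\mu,\nu])\ \ll_{w,g}\ n^{|w|}\sum_{(\mu,\nu)\notin\mathcal{B}_B}D_{[\mu,\nu]}(n)^{3-2g},
\]
and since $(\mu,\nu)\notin\mathcal{B}_B$ forces $|\mu|+|\nu|>B$, the last sum is $n^{|w|}$ times the tail beyond total size $B$ of the convergent series $\sum_\rho(\dim\rho)^{3-2g}=\zeta(2g-3;n)$ (convergent for $n\ge3$, $g\ge2$); this tail is $O_{B,g}(n^{-B-1})$, comfortably $O_B(n^{-2\log B})$, which is \eqref{eq:first-formula}. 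This also furnishes the absolute convergence used above.

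\textbf{Proving the Weingarten bound (the hard part).} This is where the real work lies. Using Koike's Schur--Weyl duality (Theorem \ref{thm:schur-weyl-mixed}) and the construction of Corollary \ref{cor:subspace-construct}, $s_{[\mu,\nu]}(h)$ equals, up to the scalar $d_\mu d_\nu$, the trace of $\pi_n^{k,\ell}(h)$ (with $k=|\mu|,\ \ell=|\nu|$) composed with a fixed operator on $\tkl$ assembled from the Young symmetrisers $\p_\mu\in\C[S_k]$, $\p_\nu\in\C[S'_\ell]$ and the contraction operators cutting out $\tkld$. Substituting $h=R_g(x)$, expanding in the standard basis, and writing $\tr(w(x))$ likewise, turns $\mathcal{I}_n(w,[\mu,\nu])$ into an integral over $\SU(n)^{2g}$ of a product of matrix coefficients of the defining representation at the words $w$ and $R_g$; since $w,R_g\in[\F_{2g},\F_{2g}]$, this agrees with the corresponding $\U(n)^{2g}$ integral for $n$ large (the reduction of \S1, point 1), so the Weingarten calculus (Theorem \ref{thm:Wg-vanilla}, Proposition \ref{prop:Weingarten}) applies and expresses it as a sum over tuples of permutations, each summand a product of Weingarten coefficients — bounded by Proposition \ref{prop:wg-bound} — times $n$ to the number of resulting index loops. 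The decay comes from the antisymmetrisation within the columns of $\mu$ and of $\nu$ built into $\p_\mu\otimes\p_\nu$, which obstructs index loops — the same mechanism that makes $\theta_{[\mu,\nu]}^n\ne0$ in Lemma \ref{lem:theta-norm} and that produced the factor $(\dim\rho)^{1-2g}$ above. I expect the hard part to be carrying out this loop-counting uniformly in $(\mu,\nu)$: after putting $w$ in cyclically reduced form and organising the permutation sum by cycle type, one must show that the gain from column-antisymmetrisation exceeds any loss coming from the $w$-legs by an amount of order $|\mu|+|\nu|$, with no $(\mu,\nu)$-dependent constant surviving.
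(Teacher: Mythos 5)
You should know at the outset that this paper contains no proof of Proposition \ref{prop:prequel-result}: it is quoted verbatim from the prequel, \cite[Cor. 1.8]{MageeRURSG1}, so any self-contained derivation is necessarily "different from the paper". Your outline does reconstruct the broad strategy used there (realising $\mathrm{Vol}_{\M_{g,n}}$ as a disintegration of Haar measure over the fibre $R_{g}(x)=\id$, expanding the delta at $\id$ in characters, the $\zeta(2g-2;n)$ denominator via the commutator identity, then truncating to the box $\ell(\mu),\ell(\nu)\leq B$, $\mu_{1},\nu_{1}\leq B^{2}$). But as it stands the argument has genuine gaps, not just missing routine detail.

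The two serious ones are these. First, the uniform bound $|\mathcal{I}_{n}(w,[\mu,\nu])|\ll_{w,g}n^{|w|}D_{[\mu,\nu]}(n)^{-(2g-2)}$ is exactly the hard analytic content of the prequel's result; you flag it as "the hard part" and offer only a heuristic about column antisymmetrisation obstructing index loops. Without it you have neither the absolute convergence needed to justify interchanging the distributional expansion of $\delta_{\id}$ with the integral against $\tr(w(x))$, nor the truncation estimate, so nothing downstream is proved. Second, the tail estimate is wrong as written: you invoke \eqref{eq:dimension-asymp}, $D_{[\mu,\nu]}(n)\asymp n^{|\mu|+|\nu|}$, uniformly over the infinite set of pairs outside the box, but that asymptotic holds for \emph{fixed} $(\mu,\nu)$ with implied constants depending on $(\mu,\nu)$, and as a uniform lower bound it fails badly once $|\mu|+|\nu|$ grows with $n$ --- e.g. $\mu=(1^{\lfloor n/2\rfloor})$, $\nu=\emptyset$ has $D_{[\mu,\nu]}(n)=\binom{n}{\lfloor n/2\rfloor}\leq2^{n}\ll n^{n/2}$ --- and you must also account for the (growing) number of pairs of each total size. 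Hence the claimed $O_{B,g}(n^{-B-1})$ tail is unsubstantiated; indeed the much weaker error $n^{|w|}n^{-2\log B}$ in the statement itself signals that the uniform dimension lower bound genuinely available outside the box is only of order about $n^{c\log B}$, and proving such a bound together with a zeta-tail/counting estimate (in the spirit of Larsen--Lubotzky) is part of the real work in \cite{MageeRURSG1}. A lesser but non-negligible point: the identification of the Atiyah--Bott--Goldman volume with Haar measure conditioned on the fibre of the word map at its \emph{critical} value $\id$, with the reducible locus and orbit-volume cancellation handled correctly, is itself a substantive result established carefully in the prequel, not a one-line appeal to \cite{GOLDMANSYMPLECTIC}.
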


Notice that for $n\geq2B$ the right hand side of (\ref{eq:first-formula})
makes sense, i.e. $D_{\mu,\nu},s_{\mu,\nu}$ are well-defined. We
also have the following proposition that follows from \cite[Prop. 3.1]{MageeRURSG1}
together with $\overline{s_{\mu,\nu}}=s_{\nu,\mu}$.
\begin{prop}
\label{prop:passage-to-unitary}Let $w\in[\F_{2g},\F_{2g}]$. Then
for any fixed $\mu,\nu$ and $n\geq\ell(\mu)+\ell(\nu)$
\[
\mathcal{I}_{n}(w,\mu,\nu)=\J_{n}(w,\nu,\mu)\eqdf\int_{\U(n)^{2g}}\tr(w(x))s_{\nu,\mu}(R_{g}(x))d\mu(x).
\]
 
\end{prop}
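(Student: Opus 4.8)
The plan is to reduce the claimed identity to two elementary facts: a duality identity relating $s_{[\mu,\nu]}$ to $s_{[\nu,\mu]}$, and the observation that, because $w$ and $R_{g}$ lie in the commutator subgroup, the relevant integrand is insensitive to multiplying the matrix variables by scalars.

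First I would record the duality identity. For any finite-dimensional unitary representation $(\rho,W)$ of a compact group $G$ and $g\in G$, the matrix $\rho(g)$ is unitary, so $\overline{\rho(g)}=\rho(g^{-1})^{T}$ and hence $\overline{\tr\rho(g)}=\tr\rho(g^{-1})=\tr\check{\rho}(g)$; that is, the character of the dual representation $\check{W}$ is the complex conjugate of the character of $W$. Next, the dual of the irreducible $\U(n)$-representation with signature $\Lambda=(\Lambda_{1}\geq\cdots\geq\Lambda_{n})$ is the irreducible representation with signature $(-\Lambda_{n}\geq\cdots\geq-\Lambda_{1})$; applying this to $\Lambda=\Lambda_{[\mu,\nu]}(n)=(\mu_{1},\ldots,\mu_{\ell(\mu)},0,\ldots,0,-\nu_{\ell(\nu)},\ldots,-\nu_{1})$ yields exactly $\Lambda_{[\nu,\mu]}(n)$. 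Hence $\check{W}_{n}^{[\mu,\nu]}\cong W_{n}^{[\nu,\mu]}$, and therefore $\overline{s_{[\mu,\nu]}(g)}=s_{[\nu,\mu]}(g)$ for all $g\in\U(n)$ whenever $n\geq\ell(\mu)+\ell(\nu)$ — the condition needed for both representations to be defined — in particular for $g\in\SU(n)$. Substituting this into the definition of $\I_{n}(w,[\mu,\nu])$ turns it into $\int\tr(w(x))\,s_{[\nu,\mu]}(R_{g}(x))\,d\mu_{\SU(n)^{2g}}^{\mathrm{Haar}}(x)$.

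Second I would pass from $\SU(n)^{2g}$ to $\U(n)^{2g}$. Since $w\in[\F_{2g},\F_{2g}]$ and $R_{g}=[a_{1},b_{1}]\cdots[a_{g},b_{g}]\in[\F_{2g},\F_{2g}]$, both $w$ and $R_{g}$ have exponent sum zero in every generator. As scalar matrices are central, replacing $u_{i}$ by $\zeta_{i}u_{i}$ with $\zeta_{i}\in\U(1)$ leaves $w(u_{1},\ldots,u_{2g})$ and $R_{g}(u_{1},\ldots,u_{2g})$ unchanged: all scalar factors pull to the front and their product is $1$. Thus the function $x\mapsto\tr(w(x))\,s_{[\nu,\mu]}(R_{g}(x))$ on $\U(n)^{2g}$ is constant on the orbits of $\U(1)^{2g}$ acting by coordinatewise scalar multiplication. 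The map $\SU(n)\times\U(1)\to\U(n)$, $(s,\zeta)\mapsto\zeta s$, is a surjective continuous homomorphism of compact groups, so it pushes the product of (probability) Haar measures to Haar measure on $\U(n)$; taking its $2g$-fold product and integrating out the $\U(1)^{2g}$-factor, over which the integrand is constant, gives $\J_{n}(w,[\nu,\mu])=\int\tr(w(x))\,s_{[\nu,\mu]}(R_{g}(x))\,d\mu_{\SU(n)^{2g}}^{\mathrm{Haar}}(x)$, which is exactly $\I_{n}(w,[\mu,\nu])$ by the previous paragraph.

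There is no real obstacle: the whole argument is bookkeeping around these two standard facts. The only points deserving a moment's care are the claim that a surjective continuous homomorphism of compact groups pushes Haar to Haar (immediate, since the pushforward is a translation-invariant probability measure and hence Haar), and the verification that the hypothesis $n\geq\ell(\mu)+\ell(\nu)$ of the proposition is precisely what makes both $W_{n}^{[\mu,\nu]}$ and its dual $W_{n}^{[\nu,\mu]}$ well-defined, which it is because $\ell(\mu)+\ell(\nu)$ is symmetric in $\mu$ and $\nu$.
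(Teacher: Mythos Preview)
Your proof is correct and follows essentially the same approach as the paper: the paper simply invokes the identity $\overline{s_{[\mu,\nu]}}=s_{[\nu,\mu]}$ together with \cite[Prop.~3.1]{MageeRURSG1} (the passage from $\SU(n)^{2g}$ to $\U(n)^{2g}$ for commutator words), and your argument supplies self-contained proofs of exactly these two ingredients.
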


This is convenient as it will allow us to use the Weingarten calculus
directly as it is presented in $\S\S$\ref{subsec:The-Weingarten-calculus}
for $\U(n)$ rather than $\SU(n)$. By using Proposition \ref{prop:passage-to-unitary},
taking a representative $w\in\F_{2g}$ of the conjugacy class of $\gamma$
and taking $B$ such that $|w|-2\log B\leq-1$ in Proposition \ref{prop:prequel-result}
we obtain the following result from which we begin the new arguments
of this paper.
\begin{cor}
\label{cor:starting-point}Let $\gamma\in[\Gamma_{g},\Gamma_{g}]$
and $w\in[\F_{2g},\F_{2g}]$ be a representative of the conjugacy
class of $\gamma\in\Gamma$. Then there exists a \uline{finite}
set $\tilde{\Omega}$ of pairs $(\mu,\nu)$ of Young diagrams such
that 
\[
\E_{g,n}[\tr_{\gamma}]=\zeta(2g-2;n)^{-1}\sum_{\substack{\text{\text{(\ensuremath{\mu,\nu)\in}\ensuremath{\tilde{\Omega}}}}}
}D_{\mu,\nu}(n)\mathcal{J}_{n}(w,\nu,\mu)+O_{w,g}\left(\frac{1}{n}\right).
\]
\end{cor}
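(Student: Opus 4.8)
The plan is to assemble the statement directly from Proposition \ref{prop:prequel-result} and Proposition \ref{prop:passage-to-unitary}, with no new analytic input. First I would fix once and for all a representative $w\in[\F_{2g},\F_{2g}]$ of the conjugacy class of $\gamma$ (such a $w$ exists by the discussion in $\S\S$\ref{subsec:Free-groups-and-surface-groups}), so that $|w|$ is henceforth a constant depending only on $\gamma$. Then I would invoke Proposition \ref{prop:prequel-result} with the parameter $B$ chosen large enough that $|w|-2\log B\leq-1$; for instance any integer $B\geq\lceil e^{(|w|+1)/2}\rceil$ works. With this choice the error term $O_{B,w,g}(n^{|w|}n^{-2\log B})$ appearing in (\ref{eq:first-formula}) is $O_{w,g}(n^{-1})$: since $B$ has now been pinned down as a function of $w$, the constant implicit in $O_{B,w,g}$ may be absorbed into one depending only on $w$ and $g$.

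Next I would let $\tilde\Omega$ be the finite collection of pairs $(\mu,\nu)$ of Young diagrams with $\ell(\mu),\ell(\nu)\leq B$ and $\mu_1,\nu_1\leq B^2$, which is exactly the summation range in (\ref{eq:first-formula}), and apply Proposition \ref{prop:passage-to-unitary} termwise: for each $(\mu,\nu)\in\tilde\Omega$ and all $n\geq\ell(\mu)+\ell(\nu)$ one has $\I_n(w,[\mu,\nu])=\J_n(w,[\nu,\mu])$, so
\[
\E_{g,n}[\tr_\gamma]=\zeta(2g-2;n)^{-1}\sum_{(\mu,\nu)\in\tilde\Omega}D_{[\mu,\nu]}(n)\,\J_n(w,[\nu,\mu])+O_{w,g}\left(\frac{1}{n}\right).
\]
To bring this to the form in the statement I would reindex the sum by the involution $(\mu,\nu)\mapsto(\nu,\mu)$: the set $\tilde\Omega$ is invariant under this involution, and $D_{[\mu,\nu]}(n)=D_{[\nu,\mu]}(n)$ since $W_n^{[\nu,\mu]}$ is the dual representation of $W_n^{[\mu,\nu]}$ and hence of equal dimension. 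Finally, because the asserted identity is an asymptotic statement as $n\to\infty$, the finitely many values $n<2B$ for which some $D_{[\mu,\nu]}$ or $\J_n$ is not yet defined may be swept into the implied constant; for $n\geq2B$ all the terms above are well-defined (note $\ell(\mu),\ell(\nu)\leq B$) and all the cited identities apply.

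There is no essential difficulty here: the analytic content is entirely contained in the two propositions quoted from the prequel \cite{MageeRURSG1}. The only points that call for a little care are the bookkeeping of the error constant once $B$ has been fixed in terms of $w$, and the relabelling $\mu\leftrightarrow\nu$ that is forced by the shape of the identity in Proposition \ref{prop:passage-to-unitary}, which must be tracked consistently through both the index set $\tilde\Omega$ and the dimension factor $D_{[\mu,\nu]}(n)$.
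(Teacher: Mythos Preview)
Your proposal is correct and takes essentially the same approach as the paper: fix $w$, choose $B$ so that $|w|-2\log B\leq-1$ in Proposition~\ref{prop:prequel-result}, and then replace each $\mathcal{I}_n$ by the corresponding $\mathcal{J}_n$ via Proposition~\ref{prop:passage-to-unitary}. Your explicit bookkeeping of the relabelling $(\mu,\nu)\mapsto(\nu,\mu)$ and the equality $D_{[\mu,\nu]}(n)=D_{[\nu,\mu]}(n)$ is more careful than the paper, which leaves this implicit.
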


As we know $\lim_{n\to\infty}\zeta(2g-2,n)=1$ by (\ref{eq:witten-limit}),
we have now reduced the proof of Theorem \ref{thm:main-convergence}
to establishing suitable bounds for the integrals $\mathcal{J}_{n}(w,\mu,\nu)$
where we can view $\mu,\nu$ as \emph{fixed} Young diagrams since
$\ensuremath{\tilde{\Omega}}$ is finite.

\section{Combinatorial Integration\label{sec:The-contribution-from-single-small-dim}}

\subsection{Setup and motivation\label{subsec:Setup-and-motivation}}

The main result of the rest of the paper is the following.
\begin{thm}
\label{thm:main-single-small-dim}Let $\gamma\in\Gamma_{g}$ with
$\gamma\neq\id$. Let $w\in\F_{2g}$ be a shortest element representing
the conjugacy class of $\gamma$. For each $k,\ell\in\N_{0}$ there
is a constant $C(w,k,\ell)>0$ such that for any $\mu\vdash k$, $\nu\vdash\ell$
\[
\left|D_{\mu,\nu}(n)\J_{n}(w,\mu,\nu)\right|\leq C(w,k,\ell)
\]
for all $n\in\N$. 
\end{thm}

Accordingly, since we know the large $n$ behavior of $D_{\mu,\nu}(n)$
from (\ref{eq:dimension-asymp}), in this section we wish to estimate
\[
\J_{n}(w,\mu,\nu)=\int_{\U(n)^{2g}}\tr(w(x))s_{\mu,\nu}(R_{g}(x))d\mu(x)
\]
for fixed $\mu\vdash k,\nu\vdash\ell$. 

\subsubsection*{What doesn't work }

We begin by discussing why the most straightforward approach to this
problem leads to serious complications. It is possible to approach
the problem by writing $s_{\mu,\nu}(h)$ as a fixed finite linear
combinations of functions
\[
p_{\mu'}(h)p_{\nu'}(h^{-1})
\]
where $p_{\mu'}(h)$ (resp. $p_{\nu'}(h^{-1})$) is a power sum symmetric
polynomial of the eigenvalues of $h$ (resp. $h^{-1}$ or $\bar{h}$).
See for example \cite[\S\S 3.3]{MageeRURSG1} for one way to do this.
The coefficients of this expansion are fixed, but not transparent,
since they involve Littlewood--Richardson coefficients. In any case,
this approach leads to writing $\J_{n}(w,\mu,\nu)$ as a finite linear
combination of integrals of the form
\begin{align}
 & \int_{\U(n)^{2g}}\tr(w(x))\tr\left(R_{g}(x)^{k_{1}}\right)\cdots\tr\left(R_{g}(x)^{k_{p}}\right)\label{eq:power-sum-integral-naive}\\
\cdot & \tr\left(R_{g}(x)^{-\ell_{1}}\right)\cdots\tr\left(R_{g}(x)^{-\ell_{q}}\right)d\mu(x)\nonumber 
\end{align}
where $\sum k_{j}=|\mu|$ and $\sum\ell_{j}=|\nu|$. 

The work of the author and Puder in \cite{MPunitary} gives a full
asymptotic expansion for (\ref{eq:power-sum-integral-naive}) as $n\to\infty$.
However these estimates are not sufficient for the current paper and
to motivate the rest of this $\S\ref{sec:The-contribution-from-single-small-dim}$
we explain briefly the issues involved. However, this discussion is
not needed to understand the arguments that we will make to prove
Theorem \ref{thm:main-single-small-dim}.

The main result of \cite{MPunitary} gives a full `genus' expansion
of (\ref{eq:power-sum-integral-naive}) in terms of surfaces and maps
on surfaces dictated by $w\in\F_{2g}$. Roughly speaking, every term
in this expansion comes from a homotopy class of map $f$ from an
orientable surface $\Sigma_{f}$ to $\bigvee_{i=1}^{2g}S^{1}$; to
contribute to (\ref{eq:power-sum-integral-naive}) the surface $\Sigma_{f}$
has one boundary component that maps to $w$ at the level of the fundamental
groups, $p$ boundary components that map respectively to $R_{g}^{k_{1}},\ldots,R_{g}^{k_{p}}$
at the level of fundamental groups, and $q$ boundary components that
map respectively to $R_{g}^{-\ell_{1}},\ldots,R_{g}^{-\ell_{q}}$
at the level of fundamental groups. The contribution of the pair $(f,\Sigma_{f})$
to (\ref{eq:power-sum-integral-naive}) is of the form $c(f,\Sigma_{f})n^{\chi(\Sigma_{f})}$;
the coefficient $c(f,\Sigma_{f})$ is an Euler characteristic of a
symmetry group of $(f,\Sigma_{f})$ and is not easy to calculate in
general. However, one could still hope to get decay of (\ref{eq:power-sum-integral-naive})
by controlling the possible $\chi(\Sigma_{f})$ that could appear. 

There are two issues with this. The first one is that if $w$ is not
the shortest element representing the conjugacy class of $\gamma$
then we get bounds that are not helpful. For a very simple example,
let $w=R_{g}^{\ell}$, $\gamma=\id_{\Gamma_{g}}$, and consider the
potential contribution from $p=0,q=1,\ell_{1}=\ell$. Then for any
$\nu$ with $|\nu|=\ell$ there is contribution to $\J_{n}(w,\emptyset,\nu)$
that is a multiple of 
\[
\int_{\U(n)^{2g}}\tr\left(R_{g}(x)^{\ell}\right)\tr\left(R_{g}(x)^{-\ell}\right)d\mu(x).
\]
Here, in the theory of \cite{MPunitary} there is a $(\Sigma_{f},f)$
that is an annulus, one boundary component corresponding to $w=R_{g}^{\ell}$
and one corresponding to $R_{g}^{-\ell}$, so we can only bound the
corresponding contribution to\linebreak{}
 $D_{\emptyset,\nu}(n)\J_{n}(w,\emptyset,\nu)$ by using \cite{MPunitary}
on the order of $D_{\emptyset,\nu}(n)\asymp n^{\ell}$. On the other
hand, any approach that works to establish Theorem \ref{thm:main-single-small-dim}
(for $\gamma\neq\id$) should extend to show when $\gamma=\id$, $D_{\emptyset,\nu}(n)\J_{n}(w,\emptyset,\nu)\ll n$
as $\E_{g,n}[\tr_{\id}]=n$.

Indeed, this phenomenon extends to words of the form $w_{0}R_{g}^{\ell}$
and more generally to words that are not shortest representatives
of some conjugacy class in $\Gamma_{g}$. It means that even if we
use something similar in spirit to \cite{MPunitary}, to prove Theorem
\ref{thm:main-single-small-dim} we must incorporate the theory of
shortest representative words. This indeed takes place in $\S\S$\ref{subsec:A-topological-result}-$\S\S\ref{subsec:Proof-of-chi-bound}$;
the topological result proved there hinges on this theory.

The second issue is a little more subtle and only appears for `mixed'
representations, i.e., both $\mu,\nu\neq\emptyset$. In this case,
suppose $w$ is a shortest element representing some conjugacy class
in $\Gamma_{g}$ and $w\in[\F_{2g},\F_{2g}]$. This means that there
is a pair $(f_{0},\Sigma_{f_{0}})$ where $\Sigma_{f_{0}}$ has one
boundary component that maps to $w$ at the level of the fundamental
groups. Let us take $\mu,\nu=(k),(k)$, i.e each Young diagram has
one row of $k$ boxes. This means we get a potential contribution
to $D_{\mu,\nu}(n)\J_{n}(w,\mu,\nu)$ that is a constant multiple
of
\begin{equation}
D_{(k),(k)}(n)\int_{\U(n)^{2g}}\tr(w(x))\tr\left(R_{g}(x)^{k}\right)\tr\left(R_{g}(x)^{-k}\right)d\mu(x)\label{eq:k-k-int}
\end{equation}
Now, for every $k\in\N$, there is $(f,\Sigma_{f})$ contributing
to (\ref{eq:k-k-int}) with one component that is $(f_{0},\Sigma_{f_{0}})$
and the other is an annulus with boundary components corresponding
to $R_{g}^{k},R_{g}^{-k}$. Since the annulus has Euler characteristic
$0$, and $D_{(k),(k)}\asymp n^{2k}$, the order of this contribution
to $D_{(k),(k)}(n)\J_{n}(w,(k),(k))$ is potentially $\gg n^{2k}n^{\chi(\Sigma_{f_{0}})}$.
For large enough $k$ the exponent here is arbitrarily large, which
is clearly catastrophic. In reality, this contribution must cancel
with some other contribution but we do not know how to see these cancellations.

\emph{This ends the discussion of the difficulties of the most straightforward
approach to the problems of this paper.}

\subsubsection*{What does work}

To bypass the previous issues we produce a refined version of the
Weingarten calculus that leads to a restricted set of surfaces, for
instance, not including the ones causing the problem above as well
as all generalizations of this issue. 

The basic approach is the following. Instead of trying to deal with
a complicated formula for $s_{\mu,\nu}(R_{2}(x))$ (as above) we instead
use the copy $W_{n}(\theta_{\mu,\nu}^{n})$ of $W_{n}^{\mu,\nu}$
in $\tkld$ that we found in Corollary \ref{cor:subspace-construct}.
In $\S\S\ref{subsec:A-projection-formula}$ we compute the orthogonal
projection $\q_{\theta}$ from $\tkl$ (note; not $\tkld$) onto $W_{n}(\theta_{\mu,\nu}^{n})$
(Proposition \ref{prop:proj-form}). In the formula we obtain, we
give bounds on the coefficients appearing therein (Lemma \ref{lem:coeff-bound}).
In addition, we also remember that $q_{\theta}\in\End(\tkld)$; this
fact is not obvious from our formula but turns out to be vital going
forward. 

The calculation of $\q_{\theta}$ is extra to, but in the same spirit
as, the vanilla Weingarten calculus, which is why we claim to have
refined the Weingarten calculus here.

In the expression for $\J_{n}(w,\mu,\nu)$ we now write 
\[
s_{\mu,\nu}(R_{2}(x))=\Tr_{\tkl}(A\q_{\theta}B\q_{\theta}A^{-1}\q_{\theta}B^{-1}\q_{\theta}C\q_{\theta}D\q_{\theta}C^{-1}\q_{\theta}D^{-1}\q_{\theta})
\]
 where $A,B,C,D$ are the images of the generators of $\Gamma_{2}$
under $x$. Then the entire integral of $\tr(w(x))s_{\mu,\nu}(R_{2}(x))$
is done using the usual Weingarten calculus. The fact that $q_{\theta}\in\End(\tkld)$
intervenes at a critical point to show that certain contributions
from the classical Weingarten calculus cancel and lead to restrictions
on the non-zero contributions. Precisely, the restriction we obtain
is summarized in the \textbf{forbidden matching} property below ($\S\S$\ref{subsec:A-combinatorial-integration})
and property \textbf{P4}\textbf{\emph{ }}($\S\S$\ref{subsec:A-topological-result}).

\subsection{Proof of Theorem \ref{thm:main-single-small-dim} when $k=\ell=0$\label{subsec:Proof-of-k=00003Dl=00003D0}}

Here we give a proof of Theorem \ref{thm:main-single-small-dim} when
$k=\ell=0$. This will allow us to bypass the slightly confusing issue
of using the Weingarten function $\Wg_{n,k+\ell}$ when $k+\ell=0$
in $\S\S$\ref{subsec:A-projection-formula}.

If $k=\ell=0$ then the only possible $\mu\vdash k$, $\nu\vdash\ell$
are empty Young diagrams $\mu=\nu=\emptyset$, and $W_{n}^{\emptyset,\emptyset}$
is the trivial representation of $\U(n)$, so $D_{\emptyset,\emptyset}(n)=1$
for all $n\geq1$ and $s_{\emptyset,\emptyset}(h)=1$ for all $h\in\U(n)$.
We then have 
\begin{equation}
D_{\emptyset,\emptyset}(n)\J_{n}(w,\emptyset,\emptyset)=\J_{n}(w,\emptyset,\emptyset)=\int_{\U(n)^{2g}}\tr(w(x))d\mu(x).\label{eq:empty-empty}
\end{equation}
If $w\in\F_{2g}$ is a cyclically shortest word representing the conjugacy
class of $\gamma\in\Gamma_{g}$ with $\gamma\neq\id$, then $w\neq\id$.
It then follows from (\ref{eq:haar-word-integral}) that $D_{\emptyset,\emptyset}(n)\J_{n}(w,\emptyset,\emptyset)=o_{w}(n)$
as $n\to\infty$, but in fact, (\ref{eq:empty-empty}) is given by
a rational function of $n$ for $n\geq n_{0}(w)$ by a straightforward
application of the Weingarten calculus \cite{MPunitary}. This implies
$D_{\emptyset,\emptyset}(n)\J_{n}(w,\emptyset,\emptyset)=O_{w}(1)$
as $n\to\infty$, as required.

\emph{This proves Theorem \ref{thm:main-single-small-dim} when $k=\ell=0$.
Hence in the rest of this $\S$\ref{sec:The-contribution-from-single-small-dim}
we can assume $k+\ell>0$.}

\subsection{A projection formula\label{subsec:A-projection-formula}}

Here we develop an integral calculus that is more powerful than the
usual Weingarten calculus and allows us to directly tackle $\J_{n}(w,\mu,\nu)$
without writing it in terms of integrals as in (\ref{eq:power-sum-integral-naive}).
The key point is that our method leads to the \textbf{forbidden matchings
}property of $\S\S$\ref{subsec:A-combinatorial-integration} and
property \textbf{P4}\textbf{\emph{ }}of $\S\S$\ref{subsec:A-topological-result}. 

We now view $k,\ell$, $\mu\vdash k,\nu\vdash\ell$ as fixed, assume
$k+\ell>0$, $n\geq\ell(\mu)+\ell(\nu)$, and write $\theta=\theta_{\mu,\nu}^{n}$
as in (\ref{eq:theta-def}), suppressing the dependence on $n$. Let
$W_{n}(\theta)$ be defined as in Corollary \ref{cor:subspace-construct}.
Thus $W_{n}(\theta)$ is an irreducible summand of $\tkld$ isomorphic
to $W_{n}^{\mu,\nu}$ for the group $\U(n)$.

\emph{In the remainder of the paper we drop the dependence of our
notation on $n$ whenever it adds clarity.}

Our first task is to compute the orthogonal projection $\q_{\theta}$
onto $W(\theta)$. Let $P_{\theta}$ denote the orthogonal projection
in $\tkl$ onto $\theta$. We also view $P_{\theta}$ as an element
of $\End(\tkld)$ by restriction. 

Under the canonical isomorphism $\End(\tkld)\cong\tkld\otimes\left(\tkld\right)^{\vee}$
we have $P_{\theta}\cong\frac{\theta\otimes\theta^{\vee}}{\|\theta\|^{2}}$
and also from (\ref{eq:theta-def})
\begin{equation}
P_{\theta}=\frac{1}{\|\theta\|^{2}}\rho^{k}(\p_{\mu})\hat{\rho}^{\ell}(\p_{\nu})[\tilde{\theta}_{\mu,\nu}\otimes\tilde{\theta}_{\mu,\nu}^{\vee}]\rho^{k}(\p_{\mu})\hat{\rho}^{\ell}(\p_{\nu});\label{eq:P-xi-formula}
\end{equation}
here the inner square bracket is interpreted as an element of $\End(\tkld)$.
By Schur's lemma we have
\begin{equation}
\q_{\theta}=D_{\mu,\nu}(n)\int_{h\in\U(n)}\pi(h)P_{\theta}\pi(h^{-1})d\mu(h)\label{eq:qxi-formula1}
\end{equation}
since the right hand side is an element of $\End(W(\theta))\subset\End(\tkl)$
that commutes with $\pi^{k,\ell}(\U(n))$, so it is a multiple of
$\q_{\theta}$, and it has the correct trace.

On the other hand, we can view $\tkl\otimes\left(\tkld\right)^{\vee}\cong\T_{n}^{k+\ell,k+\ell}$
by the canonical isomorphism 
\[
\tkl\otimes\left(\tkld\right)^{\vee}\cong(\C^{n})^{\otimes k}\otimes\text{\ensuremath{\left((\C^{n})^{\otimes\ell}\right)}}^{\vee}\otimes\text{\ensuremath{\left((\C^{n})^{\otimes k}\right)}}^{\vee}\otimes(\C^{n})^{\otimes\ell}
\]
followed by the following fixed isomorphism
\begin{equation}
\varphi:e_{I}^{J}\otimes\check{e}_{I'}^{J'}\mapsto e_{I\sqcup J'}\otimes\check{e}_{I'\sqcup J}.\label{eq:varphi1}
\end{equation}
Finally, there is a canonical isomorphism $\T_{n}^{k+\ell,k+\ell}\cong\End((\C^{n})^{\otimes k+\ell})$.
So combining these we fix isomorphisms
\begin{equation}
\End(\tkl)\cong\tkld\otimes\left(\tkld\right)^{\vee}\xrightarrow[\varphi]{\sim}\T_{n}^{k+\ell,k+\ell}\cong\End((\C^{n})^{\otimes k+\ell}).\label{eq:varphi2}
\end{equation}
We view the outer two isomorphisms as fixed identifications. These
isomorphisms are of unitary representations of $\U(n)$ when everything
is given its natural inner product. Moreover for $\sigma=(\sigma_{1},\sigma_{2})\in S_{k}\times S_{\ell}$
and $\tau=(\tau_{1},\tau_{2})\in S_{k}\times S_{l}$ we have for $A\in\End(\tkl)$
\begin{equation}
\varphi[\rho^{k}(\sigma_{1})\hat{\rho}^{\ell}(\sigma_{2})A\rho^{k}(\tau_{1})\hat{\rho}^{\ell}(\tau_{2})]=\rho^{k+\ell}(\sigma_{1},\tau_{2}^{-1})\varphi[A]\rho^{k+\ell}(\tau_{1},\sigma_{2}^{-1}),\label{eq:sym-twisted-intertwiner}
\end{equation}
recalling that $\rho^{k+\ell}:\C[S_{k+\ell}]\to\End((\C^{n})^{\otimes k+\ell})$
is the representation by permuting coordinates. 

We now return to the calculation of $\q_{\theta}$ in (\ref{eq:qxi-formula1}).
We have
\begin{equation}
\q_{\theta}=D_{\mu,\nu}(n)\varphi^{-1}[P_{n,k+\ell}[\varphi(P_{\theta})]]\label{eq:axi1}
\end{equation}
where $P_{n,k+\ell}$ is the projection onto the $\U(n)$-invariant
vectors (by conjugation) in $\End((\C^{n})^{\otimes k+\ell})$. This
can now be done using the classical Weingarten calculus. By Proposition
\ref{prop:Weingarten} we have
\begin{equation}
P_{n,k+\ell}[\varphi(P_{\theta})]=\rho^{k+\ell}\left(\Phi[\varphi(P_{\theta})]\cdot\Wg_{n,k+\ell}\right)\label{eq:wg-proj-form}
\end{equation}
where 
\begin{align*}
\Phi[\varphi(P_{\theta})] & =\sum_{\sigma\in S_{k+\ell}}\tr(\varphi(P_{\theta})\rho^{k+\ell}(\sigma^{-1}))\sigma.
\end{align*}
By (\ref{eq:sym-twisted-intertwiner}) and (\ref{eq:P-xi-formula}),
and since e.g. $\chi_{\mu}(g)=\chi_{\mu}(g^{-1})$ we obtain
\begin{align*}
\varphi(P_{\theta}) & =\frac{1}{\|\theta\|^{2}}\varphi\left(\rho^{k}(\p_{\mu})\hat{\rho}^{\ell}(\p_{\nu})[\tilde{\theta}_{\mu,\nu}\otimes\tilde{\theta}_{\mu,\nu}^{\vee}]\rho^{k}(\p_{\mu})\hat{\rho}^{\ell}(\p_{\nu})\right)\\
 & =\frac{1}{\|\theta\|^{2}}\rho^{k+\ell}(\p_{\mu\otimes\nu})\varphi\left(\tilde{\theta}_{\mu,\nu}\otimes\tilde{\theta}_{\mu,\nu}^{\vee}\right)\rho^{k+\ell}(\p_{\mu\otimes\nu})
\end{align*}
where 
\[
\p_{\mu\otimes\nu}\eqdf\frac{d_{\mu}d_{\nu}}{k!\ell!}\sum_{\sigma=(\sigma_{1},\sigma_{2})\in S_{k}\times S_{\ell}}\chi_{\mu}(\sigma_{1})\chi_{\nu}(\sigma_{2})\sigma\in\C[S_{k+\ell}].
\]
Now using that $\Phi$ is a $\C[S_{k+\ell}]$-bimodule morphism \cite[Prop. 2.3 (1)]{CS}
we obtain
\begin{align*}
\Phi[\varphi(P_{\theta})] & =\frac{1}{\|\theta\|^{2}}\p_{\mu\otimes\nu}\Phi\left[\varphi\left(\tilde{\theta}_{\mu,\nu}\otimes\tilde{\theta}_{\mu,\nu}^{\vee}\right)\right]\p_{\mu\otimes\nu}\\
 & =\frac{1}{\|\theta\|^{2}}\p_{\mu\otimes\nu}\left(\sum_{\sigma\in S_{k+\ell}}\tr\left(\varphi\left(\tilde{\theta}_{\mu,\nu}\otimes\tilde{\theta}_{\mu,\nu}^{\vee}\right)\rho^{k+\ell}\left(\sigma^{-1}\right)\right)\sigma\right)\p_{\mu\otimes\nu}.
\end{align*}
Now, $\tr(\varphi\left(\tilde{\theta}_{\mu,\nu}\otimes\tilde{\theta}_{\mu,\nu}^{\vee}\right)\rho^{k+\ell}(\sigma^{-1}))$
is equal to $1$ if and only if $\sigma$ is in $S_{\mu}\times S_{\nu}\leq S_{k}\times S_{\ell}$,
and is $0$ otherwise. So we obtain
\begin{align*}
\Phi[\varphi(P_{\theta})] & =\frac{1}{\|\theta\|^{2}}\p_{\mu\otimes\nu}\left(\sum_{\sigma\in S_{\mu}\times S_{\nu}}\sigma\right)\p_{\mu\otimes\nu},
\end{align*}
hence from (\ref{eq:wg-proj-form})
\[
P_{n,k+\ell}[\varphi(P_{\theta})]=\rho^{k+\ell}\left(z_{\theta}\right)
\]
where
\begin{equation}
z_{\theta}\eqdf\sum_{\tau\in S_{k+\ell}}z_{\theta}(\tau)\tau\eqdf\frac{1}{\|\theta\|^{2}}\p_{\mu\otimes\nu}\left(\sum_{\sigma\in S_{\mu}\times S_{\nu}}\sigma\right)\p_{\mu\otimes\nu}\Wg_{n,k+\ell}\in\C[S_{k+\ell}].\label{eq:z-theta-def}
\end{equation}
Therefore we obtain the following proposition.
\begin{prop}
\label{prop:proj-form}We have 
\[
\q_{\theta}=D_{\mu,\nu}(n)\varphi^{-1}[\rho^{k+\ell}(z_{\theta})].
\]
\end{prop}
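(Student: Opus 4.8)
The plan is to assemble the chain of identities set up in the preceding computation. First I would pin down $\q_{\theta}$ by Schur's lemma: since $P_{\theta}$ is the rank-one projection onto $\theta\in W_{n}(\theta)$ and $W_{n}(\theta)\cong W_{n}^{[\mu,\nu]}$ is $\U(n)$-irreducible by Corollary \ref{cor:subspace-construct}, the Haar average $\int_{h\in\U(n)}\pi_{n}(h)P_{\theta}\pi_{n}(h^{-1})\,d\mu_{\U(n)}^{\mathrm{Haar}}(h)$ is a $\U(n)$-endomorphism supported on $W_{n}(\theta)$, hence a scalar multiple of $\q_{\theta}$; comparing traces (using $\tr P_{\theta}=1$ and $\tr\q_{\theta}=D_{[\mu,\nu]}(n)$) fixes the scalar and gives (\ref{eq:qxi-formula1}). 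Transporting this average through the unitary $\U(n)$-isomorphism $\varphi$ of (\ref{eq:varphi1})--(\ref{eq:varphi2}), under which conjugation-averaging on $\End(\tkl)$ becomes the invariant projection $P_{n,k+\ell}$ on $\End((\C^{n})^{\otimes k+\ell})$, yields (\ref{eq:axi1}) and reduces the proposition to identifying $P_{n,k+\ell}[\varphi(P_{\theta})]$.

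For that I would invoke the Collins--Śniady formula (Proposition \ref{prop:Weingarten}), so the whole computation collapses to evaluating $\Phi[\varphi(P_{\theta})]\in\C[S_{k+\ell}]$. Starting from the factored form (\ref{eq:P-xi-formula}) of $P_{\theta}$, the twisting identity (\ref{eq:sym-twisted-intertwiner}) converts the left and right actions of $\p_{\mu},\p_{\nu}$ into left and right multiplication by $\p_{\mu\otimes\nu}$ in $\C[S_{k+\ell}]$; since $\Phi$ is a $\C[S_{k+\ell}]$-bimodule morphism these factors pull outside, and one is left with $\Phi[\varphi(\tilde{\theta}_{[\mu,\nu]}^{n}\otimes\check{\tilde{\theta}}_{[\mu,\nu]}^{n})]$. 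This is the one hands-on step: expanding $\varphi$ on standard basis tensors, one checks that $\tr(\varphi(\tilde{\theta}_{[\mu,\nu]}^{n}\otimes\check{\tilde{\theta}}_{[\mu,\nu]}^{n})\rho_{n}^{k+\ell}(\sigma^{-1}))$ equals $1$ exactly when $\sigma$ lies in the Young subgroup $S_{\mu}\times S_{\nu}$ and $0$ otherwise --- the staircase shape of $\tilde{\theta}_{[\mu,\nu]}^{n}$ in (\ref{eq:tilde-theta-def}), with the $\check{e}$-block built from the distinct top indices $n,n-1,\dots$, being precisely what keeps the covariant and contravariant blocks from interacting and pins down the stabiliser. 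Substituting back gives $P_{n,k+\ell}[\varphi(P_{\theta})]=\rho_{n}^{k+\ell}(z_{\theta})$ with $z_{\theta}$ as in (\ref{eq:z-theta-def}), and applying $\varphi^{-1}$ then finishes the proof.

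The step I expect to be the real obstacle is keeping the inverses and transposes in (\ref{eq:sym-twisted-intertwiner}) --- and hence the precise form of $\p_{\mu\otimes\nu}$ and of the sandwich $\p_{\mu\otimes\nu}(\cdot)\p_{\mu\otimes\nu}$ --- correct: because $\varphi$ merges a covariant block with a contravariant one and the second $S$-factor permutes dual legs, a naive "glue the two symmetric groups together" would drop a transpose and break the sandwich, so this identity must be verified carefully on basis elements. A secondary point worth noting is that $\varphi^{-1}[\rho_{n}^{k+\ell}(z_{\theta})]$, a priori only an element of $\End(\tkl)$, in fact preserves $\tkld$ and restricts to the orthogonal projection onto $W_{n}(\theta)$; this is automatic from the Schur-lemma identification above, but it ultimately rests on Koike's duality (Theorem \ref{thm:schur-weyl-mixed}) guaranteeing $W_{n}(\theta)\subset\tkld$. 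Everything else --- the trace/delta computation, matching the dimension prefactor $D_{[\mu,\nu]}(n)$ from (\ref{eq:qxi-formula1}) against the normalisation of $z_{\theta}$, and the invocation of Proposition \ref{prop:Weingarten} --- is routine.
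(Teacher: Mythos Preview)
Your proposal is correct and follows essentially the same route as the paper: the argument in the text leading up to Proposition~\ref{prop:proj-form} is exactly the chain you describe --- Schur's lemma giving (\ref{eq:qxi-formula1}), transport through $\varphi$ to (\ref{eq:axi1}), Collins--\'Sniady (Proposition~\ref{prop:Weingarten}) reducing to $\Phi[\varphi(P_{\theta})]$, then pulling $\p_{\mu\otimes\nu}$ out via (\ref{eq:sym-twisted-intertwiner}) and the bimodule property of $\Phi$, and finally the stabiliser computation for $\tilde{\theta}_{[\mu,\nu]}^{n}$ yielding $\sum_{\sigma\in S_{\mu}\times S_{\nu}}\sigma$. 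Your flagged concerns about the inverses in (\ref{eq:sym-twisted-intertwiner}) and about $\varphi^{-1}[\rho_{n}^{k+\ell}(z_{\theta})]$ landing in $\End(\tkld)$ are well taken but do not alter the strategy.
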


We can use the bound for the coefficients of $\Wg_{n,k+\ell}$ from
Proposition \ref{prop:wg-bound} to infer a bound on the coefficients
$z_{\theta}(\tau)$ . For $\sigma\in S_{k+\ell}$, let $\|\sigma\|_{k,\ell}$
denote the minimum $m$ for which 
\[
\sigma=\sigma_{0}t_{1}t_{2}\cdots t_{m}
\]
 where $\sigma_{0}\in S_{k}\times S_{\ell}$ and $t_{1},\ldots,t_{m}$
are transpositions in $S_{k+\ell}$.
\begin{lem}
\label{lem:coeff-bound}For all $\tau\in S_{k+\ell}$ and $\theta=\theta_{\mu,\nu}$
as above,\linebreak{}
$z_{\theta}(\tau)=O_{k,\ell}(n^{-k-\ell-\|\tau\|_{k,\ell}})$ as $n\to\infty$.
\end{lem}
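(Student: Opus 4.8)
The plan is to unwind the definition \eqref{eq:z-theta-def} of $z_\theta$ and track how multiplication by the two copies of $\p_{\mu\otimes\nu}$ and by $\sum_{\sigma\in S_\mu\times S_\nu}\sigma$ interacts with the Weingarten bound of Proposition \ref{prop:wg-bound}. Write
\[
z_\theta=\frac{1}{\|\theta\|^2}\,\p_{\mu\otimes\nu}\Bigl(\sum_{\sigma\in S_\mu\times S_\nu}\sigma\Bigr)\p_{\mu\otimes\nu}\,\Wg_{n,k+\ell}.
\]
All the factors to the left of $\Wg_{n,k+\ell}$ are supported, as elements of $\C[S_{k+\ell}]$, on the subgroup $S_k\times S_\ell$: indeed $\p_\mu\in\C[S_k]$, $\p_\nu\in\C[S'_\ell]$, $S_\mu\times S_\nu\leq S_k\times S_\ell$, and $\|\theta\|$ is just a positive scalar independent of $n$ by Lemma \ref{lem:theta-norm}. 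Hence there are fixed scalars $c_{\sigma_0}$, depending only on $\mu,\nu$ (in particular not on $n$), such that
\[
z_\theta=\sum_{\sigma_0\in S_k\times S_\ell} c_{\sigma_0}\,\sigma_0\,\Wg_{n,k+\ell}.
\]
Extracting the coefficient of a fixed $\tau\in S_{k+\ell}$ gives $z_\theta(\tau)=\sum_{\sigma_0\in S_k\times S_\ell} c_{\sigma_0}\,\Wg_{n,k+\ell}(\sigma_0^{-1}\tau)$.

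Now I apply Proposition \ref{prop:wg-bound} to each term: $\Wg_{n,k+\ell}(\sigma_0^{-1}\tau)=O_{k,\ell}(n^{-(k+\ell)-|\sigma_0^{-1}\tau|})$, where $|\cdot|$ is the minimal number of transpositions. The key combinatorial observation is that for any $\sigma_0\in S_k\times S_\ell$ one has $|\sigma_0^{-1}\tau|\geq \|\tau\|_{k,\ell}$: by definition $\|\tau\|_{k,\ell}$ is the minimum length of a transposition word expressing $\tau$ modulo a left factor in $S_k\times S_\ell$, and $\sigma_0^{-1}\tau=\sigma_0^{-1}\cdot\tau$ is precisely such a left translate, so writing $\sigma_0^{-1}\tau$ as a product of $|\sigma_0^{-1}\tau|$ transpositions exhibits $\tau=\sigma_0\cdot(\text{that product})$, witnessing $\|\tau\|_{k,\ell}\leq|\sigma_0^{-1}\tau|$. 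Therefore every term in the (finite, $n$-independent) sum over $\sigma_0$ is $O_{k,\ell}(n^{-(k+\ell)-\|\tau\|_{k,\ell}})$, and since the number of summands and the coefficients $c_{\sigma_0}$ do not depend on $n$, the same bound holds for $z_\theta(\tau)$.

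I do not anticipate a serious obstacle here; the lemma is essentially bookkeeping once one isolates the two facts above. The only point requiring a little care is the first one—confirming that $\p_{\mu\otimes\nu}$, the sum over $S_\mu\times S_\nu$, and $\|\theta\|^{-2}$ genuinely contribute only $n$-independent structure supported on $S_k\times S_\ell$—but this is immediate from the definitions of $\p_\mu,\p_\nu$ in $\S\S\ref{subsec:Representation-theory-of-sym-groups}$, the fact that $S_k$ and $S'_\ell$ commute inside $S_{k+\ell}$, and Lemma \ref{lem:theta-norm}. The inequality $|\sigma_0^{-1}\tau|\geq\|\tau\|_{k,\ell}$ is really the definition of $\|\cdot\|_{k,\ell}$ read correctly, so the proof reduces to assembling these observations and invoking Proposition \ref{prop:wg-bound}.
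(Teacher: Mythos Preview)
Your proof is correct and follows essentially the same approach as the paper: both write $z_\theta$ as an $n$-independent linear combination $\sum_{\sigma_0\in S_k\times S_\ell}c_{\sigma_0}\sigma_0$ times $\Wg_{n,k+\ell}$, extract the coefficient of $\tau$, and then observe that each $|\sigma_0^{-1}\tau|\geq\|\tau\|_{k,\ell}$ so that Proposition~\ref{prop:wg-bound} gives the result. Your write-up is slightly more explicit in justifying the inequality $|\sigma_0^{-1}\tau|\geq\|\tau\|_{k,\ell}$, but the argument is the same.
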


\begin{proof}
Referring to (\ref{eq:z-theta-def}), as $n\to\infty$, $\|\theta\|^{-2}=O_{k,\ell}(1)$
by Lemma \ref{lem:theta-norm} and the coefficients of\\
$\p_{\mu\otimes\nu}\left(\sum_{\sigma\in S_{\mu}\times S_{\nu}}\sigma\right)\p_{\mu\otimes\nu}$
are clearly $O_{k,\ell}(1)$, so $z_{\theta}$ has the form
\[
\left(\sum_{\sigma\in S_{k}\times S_{\ell}}A(\sigma)\sigma\right)\Wg_{n,k+\ell}
\]
 where each $A(\sigma)$ is $O_{k,\ell}(1)$. This means 
\[
z_{\theta}(\tau)=\sum_{\sigma\in S_{k}\times S_{\ell},\,\sigma'\in S_{k+\ell}\,:\,\sigma\sigma'=\tau}A(\sigma)\Wg_{n,k+\ell}(\sigma').
\]
The order of any of the finitely many summands above is $n^{-k-\ell-|\sigma'|}$
by Proposition \ref{prop:wg-bound}, and the minimum possible value
of $|\sigma'|$ is $\|\tau\|_{k,\ell}$.
\end{proof}
Before moving on, it is useful to explain the operator $\varphi^{-1}[\rho^{k+\ell}(\pi)]$
for $\pi\in S_{k+\ell}$. For $I=(i_{1},\ldots,i_{k+\ell})$ let $I'(I;\pi)\eqdf i_{\pi(1)},\ldots,i_{\pi(k)}$
and $J'(I;\pi)\eqdf i_{\pi(k+1)},\ldots,i_{\pi(k+\ell)}$. As an element
of $(\C^{n})^{\otimes k+\ell}\otimes\left(\left(\C^{n}\right)^{\vee}\right){}^{\otimes k+\ell}$,
$\rho^{k+\ell}(\pi)$ is given by
\[
\sum_{I=(i_{1},\ldots,i_{k}),J=(j_{k+1},\ldots,j_{k+\ell})}e_{I'(I\sqcup J;\pi)\sqcup J'(I\sqcup J;\pi)}\otimes\check{e}_{I\sqcup J},
\]
so from (\ref{eq:varphi1}) 
\begin{align}
\varphi^{-1}[\rho^{k+\ell}(\pi)] & =\sum_{I=(i_{1},\ldots,i_{k}),J=(j_{k+1},\ldots,j_{k+\ell})}e_{I'(I\sqcup J;\pi)}^{J}\otimes\check{e}_{I}^{J'(I\sqcup J;\pi)}.\label{eq:matching-end}
\end{align}

\subsection{A combinatorial integration formula\label{subsec:A-combinatorial-integration}}

In this rest of this $\S\ref{sec:The-contribution-from-single-small-dim}$
we assume $g=2$. All proofs extend to $g\geq3$. We write $\{a,b,c,d\}$
for the generators of $\F_{4}$ and $R\eqdf[a,b][c,d]$. Assume both
$\gamma$ and $w$ are not the identity and $w\in[\F_{4},\F_{4}]$
according to the remarks at the beginning of $\S\S$\ref{subsec:Results-of-the}.
We write $w$ in reduced form:
\begin{equation}
w=f_{1}^{\epsilon_{1}}f_{2}^{\epsilon_{2}}\ldots f_{|w|}^{\epsilon_{|w|}},\quad\epsilon_{u}\in\{\pm1\},\,f_{u}\in\{a,b,c,d\},\label{eq:combinatorial-word}
\end{equation}
where if $f_{u}=f_{u+1}$, then $\epsilon_{u}=\epsilon_{u+1}$. For
$f\in\{a,b,c,d\}$ let $p_{f}$ denote the number of occurrences of
$f^{+1}$ in (\ref{eq:combinatorial-word}). 
The expression (\ref{eq:combinatorial-word}) implies that for $h\eqdf(h_{a},h_{b},h_{c},h_{d})\in\U(n)^{4}$,
\begin{equation}
\tr(w(h))=\sum_{i_{j}\in[n]}(h_{f_{1}}^{\epsilon_{1}})_{i_{1}i_{2}}(h_{f_{2}}^{\epsilon_{2}})_{i_{2}i_{3}}\cdots(h_{f_{|w|}}^{\epsilon_{|w|}})_{i_{|w|}i_{1}}.\label{eq:word-trace}
\end{equation}

Working with this expression will be cumbersome so we explain a diagrammatic
way to think about (\ref{eq:word-trace}). This will be the starting
point for how we eventually understand $\J_{n}(w,\mu,\nu)$ in terms
of decorated surfaces. We begin with a collection of intervals as
follows.

\emph{\uline{\mbox{$w$}-intervals and the \mbox{$w$}-loop}}

Firstly, for every $j\in[w]$, with $f_{j}=f$ as in (\ref{eq:combinatorial-word})
and $\epsilon_{j}=1$ we take a copy of $[0,1]$ and direct it from
$0$ to $1$.

In our constructions, every interval will have two directions: the
\emph{intrinsic direction} (which is the direction from $0$ to $1$)
and the \emph{assigned direction. }In the case just discussed, these
agree, but in general they will not.

We write $[0,1]_{f,j,w}$ for such an interval and $\mathfrak{I}_{f,w}^{+}$
for the collection of these intervals.

For every $j\in[w]$, with $f_{j}=f$ as in (\ref{eq:combinatorial-word})
and $\epsilon_{j}=-1$ we take a copy of $[0,1]$ and direct this
interval from $1$ to $0$. We write $[0,1]_{f^{-1},j,w}$ for such
an interval and $\mathfrak{I}_{f,w}^{-}$ for the collection of these
intervals.

All the intervals described above are called \emph{$w$-intervals}.
There are $|w|$ of these intervals in total.

\emph{\uline{w-intermediate-intervals.}}

Between each $[0,1]_{f_{j}^{\epsilon_{j}},j,w}$ and $[0,1]_{f_{j+1}^{\epsilon_{j+1}},j+1,w}$
we add a new interval connecting $1_{f_{j}^{\epsilon_{j}},j,w}$ to
$0_{f_{j+1}^{\epsilon_{j+1}},j+1,w}$, where the indices $j$ run
mod $|w|$. These intervals added are called $w$\emph{-intermediate-intervals.
}Note that these intervals together with the $w$-intervals now form
a closed cycle that is paved by $2|w|$ intervals alternating between
$w$-intervals and $w$-intermediate-intervals. Starting at $[0,1]_{f_{1}^{\epsilon_{1}},1,w}$,
reading the directions and $f$-labels of the $w$-intervals so that
every $w$-interval is traversed from $0$ to $1$ spells out the
word $w$. The resulting circle is called the $w$-loop and the previously
defined orientation of this loop is now fixed. See Figure \ref{fig:w-loops}
for an illustration of the $w$-loop in a particular example.

\begin{figure}

\begin{centering}
\includegraphics[scale=0.6]{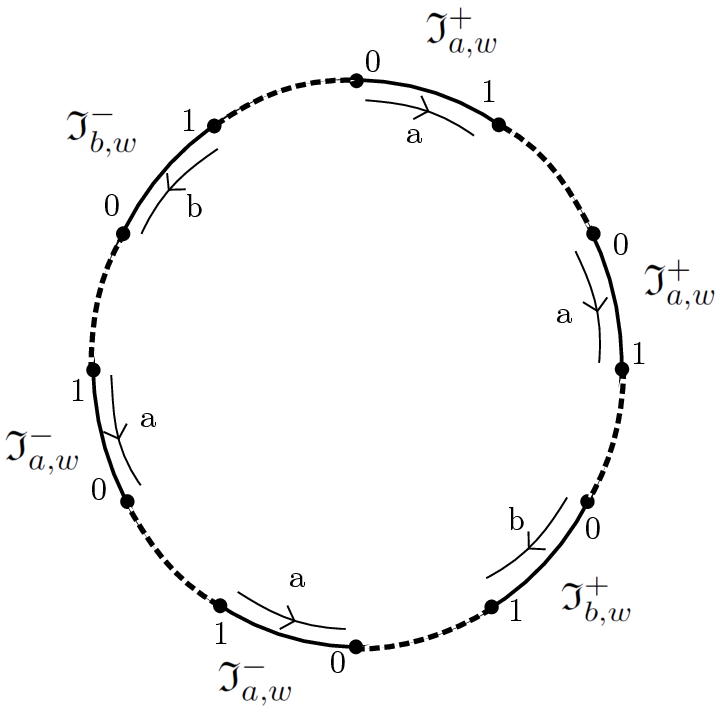}\caption{\label{fig:w-loops}Illustration of the $w$-loop for $w=a^{2}ba^{-2}b^{-1}$.
The solid intervals are $w$-intervals and the dashed intervals are
$w$-intermediate-intervals. We also label each interval by the set
e.g. $\protect\II_{a,w}^{+}$ to which they belong.}
\par\end{centering}
\end{figure}

We now view the indices $i_{j}$ as an assignment
\begin{align*}
\a & :\{\text{end-points of \ensuremath{w}-intervals}\}\to[n],\\
\a(0_{f,j,w})\eqdf i_{j},\, & \a(1_{f,j,w})=i_{j+1},\,\a(0_{f^{-1},j,w})=i_{j},\,\a(1_{f^{-1},j,w})=i_{j+1}.
\end{align*}
The condition that $\a$ comes from a single collection of $i_{j}$
is precisely that \emph{if two end points of $w$-intervals are connected
by a $w$-intermediate-interval, they are assigned the same value
by $\a$. }Let $\A(w)$ denote the collection of such $\a$. If $I$
is any copy of $[0,1]$ we write $0_{I}$ for the copy of $0$ and
$1_{I}$ for the copy of $1$ in $I$. We can now write
\[
\tr(w(h))=\sum_{\a\in\A(w)}\prod_{f\in\{a,b,c,d\}}\text{\ensuremath{\left(\prod_{\i\in\II_{f,w}^{+}}h_{\a(0_{\i})\a(1_{\i})}\right)\left(\prod_{\j\in\II_{f,w}^{-}}\overline{h_{\a(1_{\j})\a(0_{\j})}}\right)}}.
\]

Now let $v_{p}$ be an orthonormal basis for $W_{n}(\theta)$. We
have 
\begin{align*}
s_{\mu,\nu}(R_{g}(h_{a},h_{b},h_{c},h_{d}))= & \sum_{p_{i}}\langle h_{a}v_{p_{2}},v_{p_{1}}\rangle\langle h_{b}v_{p_{3}},v_{p_{2}}\rangle\langle h_{a}^{-1}v_{p_{4}},v_{p_{3}}\rangle\langle h_{b}^{-1}v_{p_{5}},v_{p_{4}}\rangle\\
 & \langle h_{c}v_{p_{6}},v_{p_{5}}\rangle\langle h_{d}v_{p_{7}},v_{p_{6}}\rangle\langle h_{c}^{-1}v_{p_{8}},v_{p_{7}}\rangle\langle h_{d}^{-1}v_{p_{1}},v_{p_{8}}\rangle.
\end{align*}
Here we have written e.g. $h_{a}v_{p_{2}}$ for $\pi_{n}^{k,\ell}(h_{a})v_{p_{2}}$
to make things easier to read. Next we write each $v_{p}=\sum_{I,J}\beta_{pI}^{J}e_{I}^{J}$,
where $\beta_{pI}^{J}\eqdf\langle v_{p},e_{I}^{J}\rangle.$ We then
have 
\begin{align}
 & \langle h_{a}v_{p_{2}},v_{p_{1}}\rangle\langle h_{b}v_{p_{3}},v_{p_{2}}\rangle\langle h_{a}^{-1}v_{p_{4}},v_{p_{3}}\rangle\langle h_{b}^{-1}v_{p_{5}},v_{p_{4}}\rangle\nonumber \\
 & \times\langle h_{c}v_{p_{6}},v_{p_{5}}\rangle\langle h_{d}v_{p_{7}},v_{p_{6}}\rangle\langle h_{c}^{-1}v_{p_{8}},v_{p_{7}}\rangle\langle h_{d}^{-1}v_{p_{1}},v_{p_{8}}\rangle\nonumber \\
= & \sum_{\r_{f},\RR_{f},\V_{f},\v_{f},\UU_{f},\u_{f},\s_{f},\SS_{f}}\beta_{p_{2}\s_{a}}^{\V_{a}}\bar{\beta}_{p_{1}\r_{a}}^{\UU_{a}}\beta_{p_{3}\s_{b}}^{\V_{b}}\bar{\beta}_{p_{2}\r_{b}}^{\UU_{b}}\beta_{p_{4}\RR_{a}}^{\u_{a}}\bar{\beta}_{p_{3}\SS_{a}}^{\v_{a}}\beta_{p_{5}\RR_{b}}^{\u_{b}}\bar{\beta}_{p_{4}\SS_{b}}^{\v_{b}}\nonumber \\
 & \beta_{p_{6}\s_{c}}^{\V_{c}}\bar{\beta}_{p_{5}\r_{c}}^{\UU_{c}}\beta_{p_{7}\s_{d}}^{\V_{d}}\bar{\beta}_{p_{6}\r_{d}}^{\UU_{d}}\beta_{p_{8}\RR_{c}}^{\u_{c}}\bar{\beta}_{p_{7}\SS_{c}}^{\v_{c}}\beta_{p_{1}\RR_{d}}^{\u_{d}}\bar{\beta}_{p_{8}\SS_{d}}^{\v_{d}}\nonumber \\
 & \langle h_{a}e_{\s_{a}}^{\V_{a}},e_{\r_{a}}^{\UU_{a}}\rangle\langle h_{b}e_{\s_{b}}^{\V_{b}},e_{\r_{b}}^{\UU_{b}}\rangle\langle h_{a}^{-1}e_{\RR_{a}}^{\u_{a}},e_{\SS_{a}}^{\v_{a}}\rangle\langle h_{b}^{-1}e_{\RR_{b}}^{\u_{b}},e_{\SS_{b}}^{\v_{b}}\rangle\nonumber \\
 & \langle h_{c}e_{\s_{c}}^{\V_{c}},e_{\r_{c}}^{\UU_{c}}\rangle\langle h_{d}e_{\s_{d}}^{\V_{d}},e_{\r_{d}}^{\UU_{d}}\rangle\langle h_{c}^{-1}e_{\RR_{c}}^{\u_{c}},e_{\SS_{c}}^{\v_{c}}\rangle\langle h_{d}^{-1}e_{\RR_{d}}^{\u_{d}},e_{\SS_{d}}^{\v_{d}}\rangle.\label{eq:pre-diagrammatic}
\end{align}
We calculate
\begin{align}
 & \langle h_{f}e_{\s_{f}}^{\V_{f}},e_{\r_{f}}^{\UU_{f}}\rangle\langle h_{f}^{-1}e_{\RR_{f}}^{\u_{f}},e_{\SS_{f}}^{\v_{f}}\rangle\nonumber \\
= & \langle h_{f}e_{\s_{f}},e_{\r_{f}}\rangle\overline{\langle h_{f}e_{\V_{f}},e_{\UU_{f}}\rangle}\overline{\langle h_{f}e_{\SS_{f}},e_{\RR_{f}}\rangle}\langle h_{f}e_{\v_{f}},e_{\u_{f}}\rangle\nonumber \\
= & \langle h_{f}e_{\s_{f}\sqcup\v_{f}},e_{\r_{f}\sqcup\u_{f}}\rangle\overline{\langle h_{f}e_{\SS_{f}\sqcup\V_{f}},e_{\RR_{f}\sqcup\UU_{f}}\rangle}.\label{eq:plain-form}
\end{align}
We now want a diagrammatic interpretation of (\ref{eq:pre-diagrammatic})
similarly to before. We make the following constructions.

\emph{\uline{\mbox{$R$}-intervals.}}

For each $j\in[k]$ and $f\in\{a,b,c,d\}$, we make a copy of $[0,1]$,
direct it from $0$ to $1$, label it by $f$, and also number it
by $j$. We write $\II_{f,R}^{+}$ for the collection of these intervals.
These correspond to occurrences of $f$ in $R$.

For each $j\in[k]$ and $f\in\{a,b,c,d\}$, we make a copy of $[0,1]$,
direct it from $1$ to $0$, label it by $f$, and also number it
by $j$. We write $\II_{f,R}^{-}$ for the collection of these intervals.
These correspond to occurrences of $f^{-1}$ in $R$.

(These two constructions of $k$ intervals correspond to the presence
of $f$ and $f^{-1}$ each exactly once in $R$.)

These intervals are called \emph{$R$-intervals. }There are $8k$
$R$-intervals in total (for general $g$, there are $4gk$ of these
intervals).

\uline{\mbox{$R^{-1}$}}\emph{\uline{-intervals.}}

For each $j\in[k+1,k+\ell]$ and $f\in\{a,b,c,d\}$, we make a copy
of $[0,1]$, direct it from $0$ to $1$, label it by $f$, and also
number it by $j$. We write $\II_{f,R^{-1}}^{+}$ for the collection
of these intervals. These correspond to occurrences of $f$ in $R^{-1}$.

For each $j\in[k+1,k+\ell${]} and $f\in\{a,b,c,d\}$, we make a copy
of $[0,1]$, direct it from $1$ to $0$, label it by $f$, and also
number it by $j$. We write $\II_{f,R^{-1}}^{-}$ for the collection
of these intervals. These correspond to occurrences of $f^{-1}$ in
$R^{-1}$. 

These intervals are called \emph{$R^{-1}$-intervals. }There are $8\ell$
$R^{-1}$ intervals in total (for general $g$, there are $4g\ell$
of these intervals). See Figure \ref{fig:R-loops} for an illustration
of the $R$ and $R^{-1}$ intervals.

\begin{figure}

\begin{centering}
\includegraphics[scale=0.5]{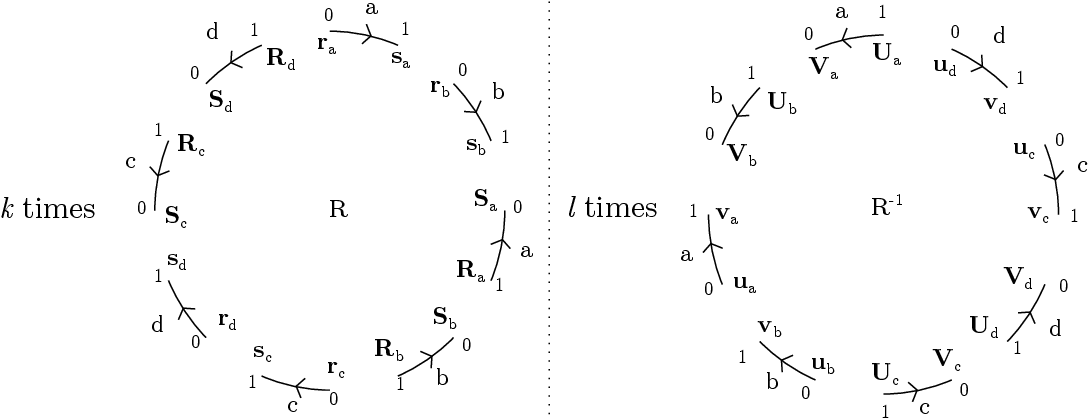}\caption{\label{fig:R-loops}Here is shown the $R$-intervals (left) and the
$R^{-1}$-intervals (right). We have indicated their assigned direction
and label (which $f$ they correspond to). We have also, for each
endpoint of an interval, indicated which index function, e.g. $\protect\r_{a}$,
has this endpoint in its domain.}
\par\end{centering}
\end{figure}

We now view (by identifying endpoints of intervals with the given
numbers of intervals in $[k+\ell]$)
\begin{align*}
\r_{f} & :\{0_{\i}:\i\in\II_{f,R}^{+}\}\to[n],\quad\RR_{f}:\{1_{\i}:\i\in\II_{f,R}^{-}\}\to[n],\\
\s_{f} & :\{1_{\i}:\i\in\II_{f,R}^{+}\}\to[n],\quad\SS_{f}:\{0_{\i}:\i\in\II_{f,R}^{-}\}\to[n],\\
\UU_{f} & :\{1_{\i}:\i\in\II_{f,R^{-1}}^{-}\}\to[n],\quad\u_{f}:\{0_{\i}:\i\in\II_{f,R^{-1}}^{+}\}\to[n],\\
\V_{f} & :\{0_{\i}:\i\in\II_{f,R^{-1}}^{-}\}\to[n],\quad\v_{f}:\{1_{\i}:\i\in\II_{f,R^{-1}}^{+}\}\to[n].
\end{align*}

We obtain from (\ref{eq:plain-form})
\begin{align*}
 & \langle h_{a}e_{\s_{a}}^{\V_{a}},e_{\r_{a}}^{\UU_{a}}\rangle\langle h_{b}e_{\s_{b}}^{\V_{b}},e_{\r_{b}}^{\UU_{b}}\rangle\langle h_{a}^{-1}e_{\RR_{a}}^{\u_{a}},e_{\SS_{a}}^{\v_{a}}\rangle\langle h_{b}^{-1}e_{\RR_{b}}^{\u_{b}},e_{\SS_{b}}^{\v_{b}}\rangle\\
 & \langle h_{c}e_{\s_{c}}^{\V_{c}},e_{\r_{c}}^{\UU_{c}}\rangle\langle h_{d}e_{\s_{d}}^{\V_{d}},e_{\r_{d}}^{\UU_{d}}\rangle\langle h_{c}^{-1}e_{\RR_{c}}^{\u_{c}},e_{\SS_{c}}^{\v_{c}}\rangle\langle h_{d}^{-1}e_{\RR_{d}}^{\u_{d}},e_{\SS_{d}}^{\v_{d}}\rangle\\
=\prod_{f} & \prod_{\i^{+}\in\II_{f,R}^{+}}\prod_{\i^{-}\in\II_{f,R}^{-}}\prod_{j^{+}\in\II_{f,R^{-1}}^{+}}\prod_{j^{-}\in\II_{f,R^{-1}}^{-}}\\
 & h_{\r_{f}(0_{\i^{+}})\s_{f}(1_{\i^{+}})}h_{\u_{f}(0_{\j^{+}})\v_{f}(1_{\j^{+}})}\bar{h}_{\RR_{f}(1_{\i^{-}})\SS_{f}(0_{\i^{-}})}\bar{h}_{\UU_{f}(1_{\j^{-}})\V_{f}(0_{\j^{-}})}.
\end{align*}

With this formalism we obtain
\begin{align}
 & \J_{n}(w,\mu,\nu)\nonumber \\
= & \sum_{p_{i}}\sum_{\r_{f},\RR_{f},\V_{f},\v_{f},\UU_{f},\u_{f},\s_{f},\SS_{f}}\sum_{\a\in\A(w)}\nonumber \\
 & \beta_{p_{2}\s_{a}}^{\V_{a}}\bar{\beta}_{p_{1}\r_{a}}^{\UU_{a}}\beta_{p_{3}\s_{b}}^{\V_{b}}\bar{\beta}_{p_{2}\r_{b}}^{\UU_{b}}\beta_{p_{4}\RR_{a}}^{\u_{a}}\bar{\beta}_{p_{3}\SS_{a}}^{\v_{a}}\beta_{p_{5}\RR_{b}}^{\u_{b}}\bar{\beta}_{p_{4}\SS_{b}}^{\v_{b}}\nonumber \\
\times & \beta_{p_{6}\s_{c}}^{\V_{c}}\bar{\beta}_{p_{5}\r_{c}}^{\UU_{c}}\beta_{p_{7}\s_{d}}^{\V_{d}}\bar{\beta}_{p_{6}\r_{d}}^{\UU_{d}}\beta_{p_{8}\RR_{c}}^{\u_{c}}\bar{\beta}_{p_{7}\SS_{c}}^{\v_{c}}\beta_{p_{1}\RR_{d}}^{\u_{d}}\bar{\beta}_{p_{8}\SS_{d}}^{\v_{d}}\nonumber \\
 & \prod_{f\in\{a,b,c,d\}}\nonumber \\
 & \int_{h\in\U(n)}\prod_{\i\in\II_{f,w}^{+}}\prod_{\j\in\II_{f,w}^{-}}\prod_{\i^{+}\in\II_{f,R}^{+}}\prod_{\i^{-}\in\II_{f,R}^{-}}\prod_{j^{+}\in\II_{f,R^{-1}}^{+}}\prod_{j^{-}\in\II_{f,R^{-1}}^{-}}\label{eq:Jn-expanded}\\
 & h_{\r_{f}(0_{\i^{+}})\s_{f}(1_{\i^{+}})}h_{\u_{f}(0_{\j^{+}})\v_{f}(1_{\j^{+}})}\bar{h}_{\RR_{f}(1_{\i^{-}})\SS_{f}(0_{\i^{-}})}\bar{h}_{\UU_{f}(1_{\j^{-}})\V_{f}(0_{\j^{-}})}dh.\nonumber 
\end{align}

For each $f$, the integral in (\ref{eq:Jn-expanded}) can be done
using the Weingarten calculus (Theorem \ref{thm:Wg-vanilla}). To
do this, fix bijections for each $f\in\{a,b,c,d\}$
\[
\II_{f}^{+}\eqdf\II_{f,R}^{+}\cup\II_{f,R^{-1}}^{+}\cup\II_{f,w}^{+}\cong[k+\ell+p_{f}]
\]
\[
\II_{f}^{-}\eqdf\II_{f,R}^{-}\cup\II_{f,R^{-1}}^{-}\cup\II_{f,w}^{-}\cong[k+\ell+p_{f}]
\]
such that 
\[
\II_{f,w}^{+}\cong[k+\ell+1,k+\ell+p_{f}],\,\II_{f,w}^{-}\cong[k+\ell+1,k+\ell+p_{f}]
\]
 and
\begin{equation}
\II_{f,R}^{+}\cong[k],\,\II_{f,R}^{-}\cong[k],\,\II_{f,R^{-1}}^{+}\cong[k+1,k+\ell],\,\II_{f,R^{-1}}^{-}\cong[k+1,k+\ell]\label{eq:bijs}
\end{equation}
correspond to the original numberings of $\II_{f,R}^{+}$, $\II_{f,R}^{-}$,
$\II_{f,R^{-1}}^{+}$, $\II_{f,R^{-1}}^{-}$.

Hence if $\sigma_{f},\tau_{f}\in S_{k+\ell+p_{f}}$ we view $\sigma_{f},\tau_{f}:\II_{f}^{+}\to\II_{f}^{-}$
by the above fixed bijections. For each $f\in\{a,b,c,d\}$ we say
$(\a,\r_{f},\u_{f},\R_{f},\UU_{f})\to\sigma_{f}$ if for all $\i\in\II_{f}^{+}$,
$\i'\in\II_{f}^{-}$ with $\sigma_{f}(\i)=\i'$, we have 
\[
[\r_{f}\sqcup\u_{f}\sqcup\a](0_{\i})=[\R_{f}\sqcup\UU_{f}\sqcup\a](1_{\i'});
\]
here we wrote e.g. $[\r_{f}\sqcup\u_{f}\sqcup\a]$ for the function
that $\a,\r_{f},\u_{f}$ induce on $\{0_{\i}\,:\,\i\in\II_{f}^{+}\}$.
Similarly we say $(\a,\s_{f},\v_{f},\SS_{f},\V_{f})\to\tau_{f}$ if
for all $\i\in\II_{f}^{+},\i'\in\II_{f}^{-}$ with $\tau_{f}(\i)=\i'$
we have 
\[
[\s_{f}\sqcup\v_{f}\sqcup\a](1_{\i})=[\SS_{f}\sqcup\V_{f}\sqcup\a](0_{\i'}).
\]
Theorem \ref{thm:Wg-vanilla} translates to
\begin{align*}
 & \int_{h\in\U(n)}\prod_{\i\in\II_{f,w}^{+}}\prod_{\j\in\II_{f,w}^{-}}\prod_{\i^{+}\in\II_{f,R}^{+}}\prod_{\i^{-}\in\II_{f,R}^{-}}\prod_{j^{+}\in\II_{f,R^{-1}}^{+}}\prod_{j^{-}\in\II_{f,R^{-1}}^{-}}\\
 & h_{\r_{f}(0_{\i^{+}})\s_{f}(1_{\i^{+}})}h_{\u_{f}(0_{\j^{+}})\v_{f}(1_{\j^{+}})}\bar{h}_{\RR_{f}(1_{\i^{-}})\SS_{f}(0_{\i^{-}})}\bar{h}_{\UU_{f}(1_{\j^{-}})\V_{f}(0_{\j^{-}})}dh\\
= & \sum_{\sigma_{f},\tau_{f}\in S_{k+\ell+p_{f}}}\Wg_{n,k+\ell+p_{f}}(\sigma_{f}\tau_{f}^{-1})\\
 & \times\mathbf{1}\{(\a,\r_{f},\u_{f},\R_{f},\UU_{f})\to\sigma_{f},(\a,\s_{f},\v_{f},\SS_{f},\V_{f})\to\tau_{f}\},
\end{align*}
so putting this into (\ref{eq:Jn-expanded}) gives
\begin{align*}
\J_{n}(w,\mu,\nu)= & \sum_{\sigma_{f},\tau_{f}\in S_{k+\ell+p_{f}}}\left(\prod_{f\in\{a,b,c,d\}}\Wg_{n,k+\ell+p_{f}}(\sigma_{f}\tau_{f}^{-1})\right)\\
 & \sum_{p_{i}}\sum_{\substack{\a\in\A(w),\r_{f},\RR_{f},\V_{f},\v_{f},\UU_{f},\u_{f},\s_{f},\SS_{f}\\
(\a,\r_{f},\u_{f},\R_{f},\UU_{f})\to\sigma_{f}\\
(\a,\s_{f},\v_{f},\SS_{f},\V_{f})\to\tau_{f}
}
}\\
 & \beta_{p_{2}\s_{a}}^{\V_{a}}\bar{\beta}_{p_{1}\r_{a}}^{\UU_{a}}\beta_{p_{3}\s_{b}}^{\V_{b}}\bar{\beta}_{p_{2}\r_{b}}^{\UU_{b}}\beta_{p_{4}\RR_{a}}^{\u_{a}}\bar{\beta}_{p_{3}\SS_{a}}^{\v_{a}}\beta_{p_{5}\RR_{b}}^{\u_{b}}\bar{\beta}_{p_{4}\SS_{b}}^{\v_{b}}\\
 & \times\beta_{p_{6}\s_{c}}^{\V_{c}}\bar{\beta}_{p_{5}\r_{c}}^{\UU_{c}}\beta_{p_{7}\s_{d}}^{\V_{d}}\bar{\beta}_{p_{6}\r_{d}}^{\UU_{d}}\beta_{p_{8}\RR_{c}}^{\u_{c}}\bar{\beta}_{p_{7}\SS_{c}}^{\v_{c}}\beta_{p_{1}\RR_{d}}^{\u_{d}}\bar{\beta}_{p_{8}\SS_{d}}^{\v_{d}}.
\end{align*}
Here we make our main improvement over the classical Weingarten calculus.
We introduce the following beneficial property that the $\sigma_{f},\tau_{f}$
possibly have.
\begin{description}
\item [{Forbidden~matchings~property}] For every $f\in\{a,b,c,d\}$ the
following hold: neither $\sigma_{f}$ nor $\tau_{f}$ map any element
of $\II_{f,R}^{+}$ to an element of $\II_{f,R^{-1}}^{-}$, or map
an element of $\II_{f,R^{-1}}^{+}$ to an element of $\II_{f,R}^{-}$. 
\end{description}
We have the following key lemma.
\begin{lem}
\label{lem:forbidden-matchings}If for some $f\in\{a,b,c,d\},$ $\sigma_{f}$
and $\tau_{f}$ do \uline{not} have the \textbf{forbidden matchings}
property, then for any choice of $p_{1},\ldots,p_{8}$
\begin{align}
 & \sum_{\substack{\a\in\A(w),\r_{f},\RR_{f},\V_{f},\v_{f},\UU_{f},\u_{f},\s_{f},\SS_{f}\\
(\a,\r_{f},\u_{f},\R_{f},\UU_{f})\to\sigma_{f}\\
(\a,\s_{f},\v_{f},\SS_{f},\V_{f})\to\tau_{f}
}
}\label{eq:variables}\\
 & \beta_{p_{2}\s_{a}}^{\V_{a}}\bar{\beta}_{p_{1}\r_{a}}^{\UU_{a}}\beta_{p_{3}\s_{b}}^{\V_{b}}\bar{\beta}_{p_{2}\r_{b}}^{\UU_{b}}\beta_{p_{4}\RR_{a}}^{\u_{a}}\bar{\beta}_{p_{3}\SS_{a}}^{\v_{a}}\beta_{p_{5}\RR_{b}}^{\u_{b}}\bar{\beta}_{p_{4}\SS_{b}}^{\v_{b}}\nonumber \\
 & \times\beta_{p_{6}\s_{c}}^{\V_{c}}\bar{\beta}_{p_{5}\r_{c}}^{\UU_{c}}\beta_{p_{7}\s_{d}}^{\V_{d}}\bar{\beta}_{p_{6}\r_{d}}^{\UU_{d}}\beta_{p_{8}\RR_{c}}^{\u_{c}}\bar{\beta}_{p_{7}\SS_{c}}^{\v_{c}}\beta_{p_{1}\RR_{d}}^{\u_{d}}\bar{\beta}_{p_{8}\SS_{d}}^{\v_{d}}\nonumber \\
= & \,0.\nonumber 
\end{align}
\end{lem}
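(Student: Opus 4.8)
The plan is to fix $p_{1},\ldots,p_{8}$ together with a label $f\in\{a,b,c,d\}$ at which $\sigma_{f}$ or $\tau_{f}$ violates the \textbf{forbidden matchings} property, and to show that the sum in (\ref{eq:variables}), taken over the index functions $\a,\r_{g'},\RR_{g'},\ldots$ (with $g'$ ranging over $\{a,b,c,d\}$), already vanishes. The mechanism is that every scalar occurring in (\ref{eq:variables}) is of the form $\beta_{p_{i}I}^{J}=\langle v_{p_{i}},e_{I}^{J}\rangle$ or its conjugate, and $v_{p_{i}}\in W_{n}(\theta)\subset\tkld$ lies in the kernel of every mixed contraction $c_{pq}$. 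I would show that a violation of the property forces, inside one such factor, exactly one entry of the tensor index $I$ and one entry of the cotensor index $J$ to equal a common value $x$ which is otherwise an unconstrained summation variable; summing over $x\in[n]$ then pairs $v_{p_{i}}$ against a vector in $\mathrm{im}(c_{pq}^{*})$ and yields $0$.

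The routine fact to record first is that, for a basis vector $e_{I'}^{J'}$ of $\mathcal{T}^{k-1,\ell-1}$ and slots $m\in[k]$, $m'\in[\ell]$,
\[
c_{mm'}^{*}(e_{I'}^{J'})=\sum_{x\in[n]}e_{I'[x\hookrightarrow m]}^{\,J'[x\hookrightarrow m']},
\]
where $I'[x\hookrightarrow m]$ denotes $I'$ with the value $x$ inserted in slot $m$; equivalently, for $v\in\tkld$ and any fixed values of the remaining index entries, $\sum_{x\in[n]}\langle v,e_{(\ldots x\ldots)}^{(\ldots x\ldots)}\rangle=0$ whenever $x$ occupies tensor slot $m$ and cotensor slot $m'$. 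There are four ways the property can fail, all handled identically; take $\sigma_{f}(\i^{+})=\j^{-}$ with $\i^{+}\in\II_{f,R}^{+}$ carrying label $m\in[k]$ and $\j^{-}\in\II_{f,R^{-1}}^{-}$ carrying label $m'\in[\ell]$. Unwinding $(\a,\r_{f},\u_{f},\RR_{f},\UU_{f})\to\sigma_{f}$ at this pair gives $\r_{f}(0_{\i^{+}})=\UU_{f}(1_{\j^{-}})$. Reading off (\ref{eq:variables}), the functions $\r_{f}$ and $\UU_{f}$ co-occur in exactly one factor, $\bar\beta_{p_{i}\r_{f}}^{\UU_{f}}$ for the appropriate $p_{i}$ (for instance $p_{1}$ when $f=a$), with $\r_{f}$ filling the tensor index and $\UU_{f}$ the cotensor index; moreover the value $x\eqdf\r_{f}(0_{\i^{+}})=\UU_{f}(1_{\j^{-}})$ is tied to nothing else, since $0_{\i^{+}}$ is a $0$-endpoint of an interval of $\II_{f}^{+}$, constrained only through $\sigma_{f}$, which pairs it solely with $\j^{-}$; $1_{\j^{-}}$ is a $1$-endpoint of an interval of $\II_{f}^{-}$, again constrained only through $\sigma_{f}$; and the domains of $\r_{f}$ and $\UU_{f}$ are disjoint from that of $\a$. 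Hence the sum factors as $\big(\sum_{x\in[n]}\bar\beta_{p_{i}\,(\ldots x\ldots)}^{(\ldots x\ldots)}\big)$, with $x$ in tensor slot $m$ and cotensor slot $m'$, times a quantity independent of $x$, and the first factor vanishes by the fact above. The remaining three cases are identical upon substituting the co-occurring pair $(\u_{f},\RR_{f})$ in $\beta_{p_{i}\RR_{f}}^{\u_{f}}$ for the case $\sigma_{f}:\II_{f,R^{-1}}^{+}\to\II_{f,R}^{-}$; $(\s_{f},\V_{f})$ in $\beta_{p_{i}\s_{f}}^{\V_{f}}$ for $\tau_{f}:\II_{f,R}^{+}\to\II_{f,R^{-1}}^{-}$; and $(\v_{f},\SS_{f})$ in $\bar\beta_{p_{i}\SS_{f}}^{\v_{f}}$ for $\tau_{f}:\II_{f,R^{-1}}^{+}\to\II_{f,R}^{-}$, using the relation $(\a,\s_{f},\v_{f},\SS_{f},\V_{f})\to\tau_{f}$ in the latter two.

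The one step demanding real care — and where the combinatorial layout of the $R$-, $R^{-1}$- and $w$-intervals from $\S\S$\ref{subsec:A-combinatorial-integration} does all the work — is the verification just sketched: that each forbidden pairing feeds into a single $\beta$-factor of (\ref{eq:variables}), that within that factor it identifies precisely one tensor slot with one cotensor slot (so the contraction identity applies), and that the common value so produced is neither reused in another factor of the monomial nor pinned down by the matching relations attached to the other generators. Granting this, the vanishing is an immediate consequence of the contraction identity and $v_{p_{i}}\in\tkld$.
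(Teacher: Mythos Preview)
Your proposal is correct and follows essentially the same argument as the paper: a forbidden pairing forces one tensor slot and one cotensor slot in a single $\beta$-factor to share a free summation variable, and summing over that variable computes a coordinate of a mixed contraction $c_{mm'}(v_{p_i})=0$ since $v_{p_i}\in\tkld$. Your write-up is in fact slightly more careful than the paper's in that you explicitly list all four violation types and verify that the common value is not re-used elsewhere, which the paper states but does not spell out.
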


\begin{proof}
Indeed suppose $\sigma_{a}$ matches an element $\i\in\II_{a,R}^{+}$
with $\j\in\II_{a,R^{-1}}^{-}$; $\sigma_{a}(\i)=\j$. With our given
fixed bijections (\ref{eq:bijs}), $\i$ corresponds to an element
of $[k]$ and $\j$ corresponds to an element of $[k+1,k+\ell].$
Without loss of generality in the argument suppose that $0_{\i}$
corresponds to $1$ and $0_{\j}$ corresponds to $k+1$. The condition
$\sigma_{a}(\i)=\j$ and $(\a,\r_{a},\u_{a},\R_{a},\UU_{a})\to\sigma_{f}$
means that as functions on $[k]$ and $[k+1,k+\ell${]}, $\r_{a}(1)=\UU_{a}(k+1)$.
There are no other constraints on these values.

Then for all variables in (\ref{eq:variables}) fixed apart from $\r_{a}$
and $\UU_{a}$, and all values of $\r_{a},\UU_{a}$ fixed other than
$\r_{a}(1)$ and $\UU_{a}(k+1$) the ensuing sum over $\r_{a},\UU_{a}$
is 
\[
\sum_{\substack{\r_{a}(1)=\UU_{a}(k+1)}
}\beta_{p_{2}\r_{a}}^{\UU_{a}}.
\]
But recalling the contraction operators from (\ref{eq:contraction-def}),
this sum is the coordinate of $e_{\r_{a}(2)}\otimes\cdots\cdots e_{\r_{a}(k)}\otimes\check{e}_{\UU_{a}(k+2)}\otimes\cdots\otimes\check{e}_{\UU_{a}(k+\ell)}$
in $c_{1,1}(v_{p_{2}})$. But $c_{1,1}(v_{p_{2}})=0$ because $v_{p_{2}}\in\tkld$. 
\end{proof}
\emph{We henceforth write $\sum_{\sigma_{f},\tau_{f}}^{*}$ to mean
the sum is restricted to $\sigma_{f},\tau_{f}$ satisfying the }\textbf{\emph{forbidden
matchings }}\emph{property. }Lemma \ref{lem:forbidden-matchings}
now implies
\begin{align}
\J_{n}(w,\mu,\nu)= & \sum_{\sigma_{f},\tau_{f}\in S_{k+\ell+p_{f}}}^{*}\left(\prod_{f\in\{a,b,c,d\}}\Wg_{n,k+\ell+p_{f}}(\sigma_{f}\tau_{f}^{-1})\right)\nonumber \\
 & \sum_{p_{i}}\sum_{\substack{\a\in\A(w),\r_{f},\RR_{f},\V_{f},\v_{f},\UU_{f},\u_{f},\s_{f},\SS_{f}\\
(\a,\r_{f},\u_{f},\R_{f},\UU_{f})\to\sigma_{f}\\
(\a,\s_{f},\v_{f},\SS_{f},\V_{f})\to\tau_{f}
}
}\nonumber \\
 & \beta_{p_{2}\s_{a}}^{\V_{a}}\bar{\beta}_{p_{1}\r_{a}}^{\UU_{a}}\beta_{p_{3}\s_{b}}^{\V_{b}}\bar{\beta}_{p_{2}\r_{b}}^{\UU_{b}}\beta_{p_{4}\RR_{a}}^{\u_{a}}\bar{\beta}_{p_{3}\SS_{a}}^{\v_{a}}\beta_{p_{5}\RR_{b}}^{\u_{b}}\bar{\beta}_{p_{4}\SS_{b}}^{\v_{b}}\nonumber \\
 & \times\beta_{p_{6}\s_{c}}^{\V_{c}}\bar{\beta}_{p_{5}\r_{c}}^{\UU_{c}}\beta_{p_{7}\s_{d}}^{\V_{d}}\bar{\beta}_{p_{6}\r_{d}}^{\UU_{d}}\beta_{p_{8}\RR_{c}}^{\u_{c}}\bar{\beta}_{p_{7}\SS_{c}}^{\v_{c}}\beta_{p_{1}\RR_{d}}^{\u_{d}}\bar{\beta}_{p_{8}\SS_{d}}^{\v_{d}}.\label{eq:intergal-coarse-form}
\end{align}
Moreover, we can significantly tidy up (\ref{eq:intergal-coarse-form}).
For everything in (\ref{eq:intergal-coarse-form}) fixed except for
e.g. $p_{2}$, the ensuing sum over $p_{2}$ is 
\[
\sum_{p_{2}}\beta_{p_{2}\s_{a}}^{\V_{a}}\bar{\beta}_{p_{2}\r_{b}}^{\UU_{b}}=\sum_{p_{2}}\langle e_{\r_{b}}^{\UU_{b}},v_{p_{2}}\rangle\langle v_{p_{2}},e_{\s_{a}}^{\V_{a}}\rangle=\langle\q_{\theta}e_{\r_{b}}^{\UU_{b}},e_{\s_{a}}^{\V_{a}}\rangle.
\]

Therefore executing the sums over $p_{i}$ in (\ref{eq:intergal-coarse-form})
we replace the sum over $p_{i}$ and the product over $\beta$-terms
by
\begin{align}
\langle\q_{\theta}e_{\r_{b}}^{\UU_{b}},e_{\s_{a}}^{\V_{a}}\rangle\langle\q_{\theta}e_{\SS_{a}}^{\v_{a}},e_{\s_{b}}^{\V_{b}}\rangle\langle\q_{\theta}e_{\SS_{b}}^{\v_{b}},e_{\R_{a}}^{\u_{a}}\rangle\langle\q_{\theta}e_{\r_{c}}^{\UU_{c}},e_{\RR_{b}}^{\u_{b}}\rangle & \times\label{eq:pi-1}\\
\langle\q_{\theta}e_{\r_{d}}^{\UU_{d}},e_{\s_{c}}^{\V_{c}}\rangle\langle\q_{\theta}e_{\SS_{c}}^{\v_{c}},e_{\s_{d}}^{\V_{d}}\rangle\langle\q_{\theta}e_{\SS_{d}}^{\v_{d}},e_{\RR_{c}}^{\u_{c}}\rangle\langle\q_{\theta}e_{\r_{a}}^{\UU_{a}},e_{\RR_{d}}^{\u_{d}}\rangle.\nonumber 
\end{align}
By Proposition \ref{prop:proj-form} we have e.g. 
\[
\langle\q_{\theta}e_{\r_{b}}^{\UU_{b}},e_{\s_{a}}^{\V_{a}}\rangle=D_{\mu,\nu}(n)\sum_{\pi\in S_{k+\ell}}z_{\theta}(\pi)\langle\varphi^{-1}[\rho_{n}^{k,\ell}(\pi)]e_{\r_{b}}^{\UU_{b}},e_{\s_{a}}^{\V_{a}}\rangle
\]
Now recall from (\ref{eq:matching-end}) that
\[
\varphi^{-1}[\rho_{n}^{k+\ell}(\pi)]=\sum_{I=(i_{1},\ldots,i_{k}),J=(j_{k+1},\ldots,j_{k+\ell})}e_{I'(I\sqcup J;\pi)}^{J}\otimes\check{e}_{I}^{J'(I\sqcup J;\pi)}.
\]
This means that $\langle\varphi^{-1}[\rho_{n}^{k+\ell}(\pi)]e_{\r_{b}}^{\UU_{b}},e_{\s_{a}}^{\V_{a}}\rangle$
is either equal to 0 or 1 and\linebreak{}
 $\langle\varphi^{-1}[\rho_{n}^{k+\ell}(\pi)]e_{\r_{b}}^{\UU_{b}},e_{\s_{a}}^{\V_{a}}\rangle=1$
if and only if, letting (\ref{eq:bijs}) induce identifications
\begin{align*}
\{1_{\i}:\i\in\II_{a,R}^{+}\}\cong[k],\{1_{\i}:\i\in\II_{b,R^{-1}}^{-}\}\cong[k+1,k+\ell], & \,\\
\{0_{\i}:\i\in\II_{b,R}^{+}\}\cong[k],\{0_{\i}:\i\in\II_{a,R^{-1}}^{-}\}\cong[k+1,k+\ell],
\end{align*}
via their given indexing of intervals, we have $[\s_{a}\sqcup\UU_{b}]\circ\pi=[\r_{b}\sqcup\V_{a}]$,
where e.g. $\s_{a}\sqcup\UU_{b}$ is the function either on endpoints
of intervals or on $[k+\ell]$ induced by the union of $\s_{a}$ and
$\UU_{b}$. Hence, repeating this argument,
\begin{align*}
\eqref{eq:pi-1} & =D_{\mu,\nu}(n)^{8}\sum_{\pi_{1},\ldots,\pi_{8}\in S_{k+\ell}}\left(\prod_{i=1}^{8}z_{\theta}(\pi_{i})\right)\\
 & \mathbf{1}\{[\s_{a}\sqcup\UU_{b}]\circ\pi_{1}=[\r_{b}\sqcup\V_{a}],[\s_{b}\sqcup\v_{a}]\circ\pi_{2}=[\SS_{a}\sqcup\V_{b}],\\
 & [\RR_{a}\sqcup\v_{b}]\circ\pi_{3}=[\SS_{b}\sqcup\u_{a}],[\R_{b}\sqcup\UU_{c}]\circ\pi_{4}=[\r_{c}\sqcup\u_{b}],\\
 & [\s_{c}\sqcup\UU_{d}]\circ\pi_{5}=[\r_{d}\sqcup\V_{c}],[\s_{d}\sqcup\v_{c}]\circ\pi_{6}=[\SS_{c}\sqcup\V_{d}],\\
 & [\RR_{c}\sqcup\v_{d}]\circ\pi_{7}=[\SS_{d}\sqcup\u_{c}],[\RR_{d}\sqcup\UU_{a}]\circ\pi_{8}=[\r_{a}\sqcup\u_{d}]\}.
\end{align*}
Putting all these arguments together gives
\begin{align*}
 & \J_{n}(w,\mu,\nu)\\
= & D_{\mu,\nu}(n)^{8}\sum_{\sigma_{f},\tau_{f}\in S_{p_{f}+k+\ell}}^{*}\sum_{\pi_{1},\ldots,\pi_{8}\in S_{k+\ell}}\left(\prod_{f\in\{a,b,c,d\}}\Wg_{n,k+\ell+p_{f}}(\sigma_{f}\tau_{f}^{-1})\right)\left(\prod_{i=1}^{8}z_{\theta}(\pi_{i})\right)\\
 & \sum_{p_{i}}\sum_{\substack{\a\in\A(w),\r_{f},\RR_{f},\V_{f},\v_{f},\UU_{f},\u_{f},\s_{f},\SS_{f}\\
(\a,\r_{f},\u_{f},\R_{f},\UU_{f})\to\sigma_{f}\\
(\a,\s_{f},\v_{f},\SS_{f},\V_{f})\to\tau_{f}
}
}\\
 & \mathbf{1}\{[\s_{a}\sqcup\UU_{b}]\circ\pi_{1}=[\r_{b}\sqcup\V_{a}],[\s_{b}\sqcup\v_{a}]\circ\pi_{2}=[\SS_{a}\sqcup\V_{b}],\\
 & [\RR_{a}\sqcup\v_{b}]\circ\pi_{3}=[\SS_{b}\sqcup\u_{a}],[\R_{b}\sqcup\UU_{c}]\circ\pi_{4}=[\r_{c}\sqcup\u_{b}],\\
 & [\s_{c}\sqcup\UU_{d}]\circ\pi_{5}=[\r_{d}\sqcup\V_{c}],[\s_{d}\sqcup\v_{c}]\circ\pi_{6}=[\SS_{c}\sqcup\V_{d}],\\
 & [\RR_{c}\sqcup\v_{d}]\circ\pi_{7}=[\SS_{d}\sqcup\u_{c}],[\RR_{d}\sqcup\UU_{a}]\circ\pi_{8}=[\r_{a}\sqcup\u_{d}]\}.
\end{align*}

This formula says that we can calculate $\J_{n}(w,\mu,\nu)$ by summing
over some combinatorial data of matchings (the $\sigma_{f},\tau_{f},\pi_{i}$)
a quantity that we can understand well times a count of the number
of indices that satisfy the prescribed matchings. To formalize this
point of view we make the following definition.
\begin{defn}
A \emph{matching datum} of the triple $(w,k,\ell)$ is a pair $(\sigma_{f},\tau_{f})\in S_{k+\ell+p_{f}}\times S_{k+\ell+p_{f}}$
as above, satisfying the \textbf{forbidden matchings} property for
each $f\in\{a,b,c,d\}$, together with $(\pi_{1},\ldots,\pi_{8})\in(S_{k+\ell})^{8}$.
We write 
\[
\match(w,k,\ell)
\]
 for the finite collection of all matching data for $(w,k,\ell)$.

Given a matching datum $\{\sigma_{f},\tau_{f},\pi_{i}\}$, we write
$\NN(\{\sigma_{f},\tau_{f},\pi_{i}\})$ for the number of choices
of $\a\in\A(w),\r_{f},\RR_{f},\V_{f},\v_{f},\UU_{f},\u_{f},\s_{f},\SS_{f}$
such that
\begin{align}
(\a,\r_{f},\u_{f},\R_{f},\UU_{f})\to\sigma_{f} & ,(\a,\s_{f},\v_{f},\SS_{f},\V_{f})\to\tau_{f},\nonumber \\{}
[\s_{a}\sqcup\UU_{b}]\circ\pi_{1}=[\r_{b}\sqcup\V_{a}], & [\s_{b}\sqcup\v_{a}]\circ\pi_{2}=[\SS_{a}\sqcup\V_{b}],\nonumber \\{}
[\RR_{a}\sqcup\v_{b}]\circ\pi_{3}=[\SS_{b}\sqcup\u_{a}], & [\R_{b}\sqcup\UU_{c}]\circ\pi_{4}=[\r_{c}\sqcup\u_{b}],\nonumber \\{}
[\s_{c}\sqcup\UU_{d}]\circ\pi_{5}=[\r_{d}\sqcup\V_{c}], & [\s_{d}\sqcup\v_{c}]\circ\pi_{6}=[\SS_{c}\sqcup\V_{d}],\nonumber \\{}
[\RR_{c}\sqcup\v_{d}]\circ\pi_{7}=[\SS_{d}\sqcup\u_{c}], & [\RR_{d}\sqcup\UU_{a}]\circ\pi_{8}=[\r_{a}\sqcup\u_{d}].\label{eq:index-conditions}
\end{align}
\end{defn}

With this notation, we have proved the following theorem.
\begin{thm}
\label{thm:first-integration-formula}For $k+\ell>0$, $\mu\vdash k$
and $\nu\vdash\ell$, $w\in[\F_{4},\F_{4}]$, we have
\begin{align}
\J_{n}(w,\mu,\nu)= & D_{\mu,\nu}(n)^{8}\sum_{\{\sigma_{f},\tau_{f},\pi_{i}\}\in\match(w,k,\ell)}\left(\prod_{i=1}^{8}z_{\theta}(\pi_{i})\right)\nonumber \\
 & \left(\prod_{f\in\{a,b,c,d\}}\Wg_{n,k+\ell+p_{f}}(\sigma_{f}\tau_{f}^{-1})\right)\NN(\{\sigma_{f},\tau_{f},\pi_{i}\}).\label{eq:comb-int-formula}
\end{align}
\end{thm}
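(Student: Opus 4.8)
The statement is assembled from the computation already set up above; the plan is simply to organize that computation into the combinatorial form $(\ref{eq:comb-int-formula})$. First I would expand both factors of the integrand in coordinates: $\tr(w(h))$ becomes the sum over index assignments $\a\in\A(w)$ from the $w$-interval formalism, and $s_{[\mu,\nu]}(R_g(h))$ is expanded by fixing an orthonormal basis $(v_p)$ of $W_n(\theta)$, writing $v_p=\sum\beta_{pI}^{J}e_I^{J}$, and rewriting each of the eight factors $\langle h_f e_{\s_f}^{\V_f},e_{\r_f}^{\UU_f}\rangle\langle h_f^{-1}e_{\RR_f}^{\u_f},e_{\SS_f}^{\v_f}\rangle$ as a product of plain matrix coefficients of $h_f$ via the elementary identity $(\ref{eq:plain-form})$. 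This last step is precisely what forces the introduction of the $R$- and $R^{-1}$-intervals and the index functions $\r_f,\RR_f,\s_f,\SS_f,\u_f,\UU_f,\v_f,\V_f$ supported on their endpoints, and produces formula $(\ref{eq:Jn-expanded})$.

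Next, since $h_a,h_b,h_c,h_d$ are independent Haar-random unitaries and each occurs in $(\ref{eq:Jn-expanded})$ only through its own matrix coefficients, the integral factors over $f\in\{a,b,c,d\}$, and I would apply Theorem \ref{thm:Wg-vanilla} to each factor. This yields, for each $f$, a sum over $(\sigma_f,\tau_f)\in S_{k+\ell+p_f}\times S_{k+\ell+p_f}$ of $\Wg_{n,k+\ell+p_f}(\sigma_f\tau_f^{-1})$ times the Kronecker-delta constraints, which I would transcribe (using the fixed bijections $(\ref{eq:bijs})$) as the relations $(\a,\r_f,\u_f,\RR_f,\UU_f)\to\sigma_f$ and $(\a,\s_f,\v_f,\SS_f,\V_f)\to\tau_f$. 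At this point I invoke Lemma \ref{lem:forbidden-matchings}: for any $f$ whose $(\sigma_f,\tau_f)$ fails the \textbf{forbidden matchings} property, the ensuing sum over the $\beta$-coefficients vanishes, because it contracts some $v_p$ against a Kronecker delta and $v_p\in\tkld$ lies in the kernel of every mixed contraction. Hence the outer sum may be restricted to $\sum^{*}$ over forbidden-matching data.

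Then I would execute the eight sums over the basis indices $p_1,\dots,p_8$, one at each "corner" of the $R$-loop: each produces $\sum_{p_i}\beta\,\bar\beta=\langle\q_\theta\,e_{\cdot}^{\cdot},e_{\cdot}^{\cdot}\rangle$, giving the product $(\ref{eq:pi-1})$ of eight matrix coefficients of the orthogonal projection $\q_\theta$ onto $W_n(\theta)$. Inserting Proposition \ref{prop:proj-form} together with the explicit formula $(\ref{eq:matching-end})$, each such coefficient equals $\sum_{\pi\in S_{k+\ell}}z_\theta(\pi)$ times a $\{0,1\}$-valued indicator of a relation of the shape $[\,\cdot\sqcup\cdot\,]\circ\pi=[\,\cdot\sqcup\cdot\,]$ among the index functions; collecting the eight resulting permutations $\pi_1,\dots,\pi_8$ and their relations gives exactly the constraints $(\ref{eq:index-conditions})$. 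Finally I reorganize: the sum now runs over matching data $\{\sigma_f,\tau_f,\pi_i\}\in\match(w,k,\ell)$, the coefficient is $\bigl(\prod_{i=1}^{8}z_\theta(\pi_i)\bigr)\bigl(\prod_{f}\Wg_{n,k+\ell+p_f}(\sigma_f\tau_f^{-1})\bigr)$, and the remaining inner sum over all index functions satisfying $(\ref{eq:index-conditions})$ is by definition $\NN(\{\sigma_f,\tau_f,\pi_i\})$, which is $(\ref{eq:comb-int-formula})$.

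The only genuinely non-formal ingredient is Lemma \ref{lem:forbidden-matchings} (already proved), where membership in $\tkld$ is used; everything else is bookkeeping. The step I expect to be most error-prone, and hence the "hard part," is maintaining the dictionary between interval endpoints, the index functions, and the elements of $[k+\ell]$ consistently, so that the Weingarten constraints and the $\q_\theta$-constraints are transcribed correctly as $(\ref{eq:index-conditions})$ — in particular matching the factorization over $f$ and the cyclic ordering of the eight corners of the $R$-loop to the eight inner products in the expansion of $s_{[\mu,\nu]}(R_g(h))$.
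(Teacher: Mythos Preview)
Your proposal is correct and follows essentially the same route as the paper: expand $\tr(w(h))$ and $s_{[\mu,\nu]}(R_g(h))$ in coordinates to reach (\ref{eq:Jn-expanded}), apply Theorem \ref{thm:Wg-vanilla} to each $h_f$-integral, restrict to the $*$-sum via Lemma \ref{lem:forbidden-matchings}, sum out the $p_i$ to obtain (\ref{eq:pi-1}), expand each $\q_\theta$-matrix-coefficient via Proposition \ref{prop:proj-form} and (\ref{eq:matching-end}), and collect into the definitions of $\match(w,k,\ell)$ and $\NN$. Your identification of the bookkeeping dictionary as the delicate point is also exactly what the paper emphasizes.
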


We conclude this section by bounding the terms $z_{\theta}(\pi_{i})$
and\linebreak{}
$\Wg_{n,k+\ell+p_{f}}(\sigma_{f}\tau_{f}^{-1})$ using Proposition
\ref{prop:wg-bound} and Lemma \ref{lem:coeff-bound}, recalling also
(\ref{eq:dimension-asymp}). Note that $\sum_{f\in\{a,b,c,d\}}p_{f}=\frac{|w|}{2}$.
This yields
\begin{cor}
\label{cor:J_n-bound-comb}For $k+\ell>0$, $\mu\vdash k$ and $\nu\vdash\ell$,
$w\in[\F_{4},\F_{4}]$, we have
\begin{align}
\J_{n}(w,\mu,\nu)\ll_{k,\ell,w} & n^{-4k-4\ell-\frac{|w|}{2}}\sum_{\{\sigma_{f},\tau_{f},\pi_{i}\}\in\match(w,k,\ell)}\label{eq:J_n-bound-combinatorial}\\
 & n^{-\sum_{f}|\sigma_{f}\tau_{f}^{-1}|-\sum_{i=1}^{8}\|\pi_{i}\|_{k,\ell}}\NN(\{\sigma_{f},\tau_{f},\pi_{i}\}).\nonumber 
\end{align}
\end{cor}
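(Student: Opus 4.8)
The statement to prove is Corollary \ref{cor:J_n-bound-comb}, which follows from Theorem \ref{thm:first-integration-formula} by plugging in the bounds on the Weingarten coefficients and the $z_\theta$-coefficients.

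The plan is as follows. Start from the exact formula \eqref{eq:comb-int-formula} and estimate each factor appearing in a term indexed by a matching datum $\{\sigma_f,\tau_f,\pi_i\}$. First I would handle the product $\prod_{i=1}^{8} z_\theta(\pi_i)$: by Lemma \ref{lem:coeff-bound} we have $z_\theta(\pi_i) = O_{k,\ell}(n^{-k-\ell-\|\pi_i\|_{k,\ell}})$ as $n\to\infty$, so their product is $O_{k,\ell}(n^{-8(k+\ell) - \sum_{i=1}^8 \|\pi_i\|_{k,\ell}})$. Note this is where the factor $n^{-8k-8\ell}$ in \eqref{eq:J_n-bound-combinatorial} comes from — wait, the stated exponent is $-4k-4\ell$, not $-8k-8\ell$; let me reconcile this below. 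Next, for the Weingarten factors, Proposition \ref{prop:wg-bound} gives $\Wg_{n,k+\ell+p_f}(\sigma_f\tau_f^{-1}) = O_{k,\ell,w}(n^{-(k+\ell+p_f) - |\sigma_f\tau_f^{-1}|})$, so the product over $f\in\{a,b,c,d\}$ is $O(n^{-4(k+\ell) - \sum_f p_f - \sum_f |\sigma_f\tau_f^{-1}|})$. Using the identity $\sum_{f\in\{a,b,c,d\}} p_f = |w|/2$ (each generator and its inverse contribute, and the total letter count is $|w|$, so the number of positive occurrences summed over generators is $|w|/2$), this becomes $O(n^{-4(k+\ell) - |w|/2 - \sum_f |\sigma_f\tau_f^{-1}|})$.

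Multiplying the two estimates and using the triangle inequality for $O$-bounds over the fixed finite index set $\match(w,k,\ell)$, the combined prefactor from the $z_\theta$ and Weingarten terms is $O(n^{-8(k+\ell) - 4(k+\ell) - |w|/2 - \sum_f|\sigma_f\tau_f^{-1}| - \sum_i\|\pi_i\|_{k,\ell}})$. To match the claimed exponent $n^{-4k-4\ell - |w|/2 - \cdots}$ in \eqref{eq:J_n-bound-combinatorial} one should observe that $\match(w,k,\ell)$ is a \emph{finite} set whose size depends only on $k,\ell,w$, and that the overall bound is allowed to absorb a constant depending on $k,\ell,w$; the apparent discrepancy between $8(k+\ell)$ and the exponents written must be resolved by checking the precise normalization of $z_\theta$ — in fact one should double-check whether the $\beta$-coefficients absorbed into the $\q_\theta$ matrix elements already carry normalizing powers of $D_{[\mu,\nu]}(n)^{-1}$, since $\q_\theta$ is a rank-$D_{[\mu,\nu]}(n)$ projection and its matrix coefficients in the standard basis are $O(D_{[\mu,\nu]}(n)/n^{k+\ell}) = O(1)$ by \eqref{eq:dimension-asymp}; this normalization is exactly what is encoded in $z_\theta$ via Proposition \ref{prop:proj-form}, and the correct exponent emerges once one tracks that $\q_\theta$ appears eight times, each contributing $\|\pi_i\|_{k,\ell}$ after the $z_\theta$-expansion but with the leading power already accounted for.

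Finally, since $\NN(\{\sigma_f,\tau_f,\pi_i\})$ is left as an explicit nonnegative integer, no further estimate on it is needed at this stage — it simply rides along in the sum. So the proof is: apply Lemma \ref{lem:coeff-bound} to each $z_\theta(\pi_i)$, apply Proposition \ref{prop:wg-bound} to each $\Wg_{n,k+\ell+p_f}(\sigma_f\tau_f^{-1})$, use $\sum_f p_f = |w|/2$, collect the powers of $n$, and use finiteness of $\match(w,k,\ell)$ to pass from a termwise bound to a bound on the sum with implied constant depending on $k,\ell,w$. The main thing to be careful about — the only real ``obstacle'' — is bookkeeping the exact power of $n$: making sure the normalization of $\q_\theta$ (equivalently, the $D_{[\mu,\nu]}(n)$ factor hidden in $z_\theta$ versus the $D_{[\mu,\nu]}(n)^{-1}$ implicit in $\q_\theta$ being a projection onto an irreducible of that dimension, cf. \eqref{eq:qxi-formula1} and \eqref{eq:axi1}) is threaded through correctly so that the eight copies of $z_\theta$ together with the four Weingarten factors produce precisely the exponent $-4k-4\ell-|w|/2$ in front, and not something off by a fixed multiple of $(k+\ell)$.
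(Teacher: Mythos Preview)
Your approach is exactly the paper's: apply Lemma~\ref{lem:coeff-bound} to each $z_\theta(\pi_i)$, apply Proposition~\ref{prop:wg-bound} to each Weingarten factor, use $\sum_f p_f=\tfrac{|w|}{2}$, and collect powers of $n$ over the finite set $\match(w,k,\ell)$. Your uneasiness about the exponent is well founded: taking Lemma~\ref{lem:coeff-bound} and Proposition~\ref{prop:wg-bound} at face value on the formula of Theorem~\ref{thm:first-integration-formula} yields a prefactor $n^{-12(k+\ell)-|w|/2}$, not $n^{-4(k+\ell)-|w|/2}$; the resolution is precisely the $D_{[\mu,\nu]}(n)$ factor from \eqref{eq:qxi-formula1}--\eqref{eq:axi1} that you flagged --- it is silently absorbed (Proposition~\ref{prop:proj-form} and Theorem~\ref{thm:first-integration-formula} as written omit a factor $D_{[\mu,\nu]}(n)^8\asymp n^{8(k+\ell)}$), and restoring it gives the stated $-4(k+\ell)$.
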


We will proceed in the next section to understand all the quantities
in (\ref{eq:J_n-bound-combinatorial}) in topological terms by constructing
a surface from each $\{\sigma_{f},\tau_{f},\pi_{i}\}$.

\section{Topology}

\subsection{Construction of surfaces from matching data\label{subsec:Construction-of-surfaces}}

We now show how a datum in $\match(w,k,\ell)$ can be used to construct
a surface such that the terms appearing in (\ref{eq:comb-int-formula})
can be bounded by topological features of the surface. This construction
is similar to the constructions of \cite{MPunitary,MP}, but with
the presence of additional $\pi_{i}$ adding a new aspect. We continue
to assume $g=2$ for simplicity. We can still assume that $\gamma\in[\Gamma_{2},\Gamma_{2}]$
and hence $w\in[\F_{4},\F_{4}]$.

\subsubsection*{Construction of the 1-skeleton}

\uline{\mbox{$\pi$}}\emph{\uline{-intervals.}} The identifications
of the previous section mean that we view
\begin{align}
\pi_{1} & :\{\,0_{\i}\,:\,\i\in\II_{b,R}^{+}\cup\II_{a,R^{-1}}^{-}\}\to\{\,1_{\i'}\,:\,\i'\in\II_{a,R}^{+}\cup\II_{b,R^{-1}}^{-}\,\},\nonumber \\
\pi_{2} & :\{\,0_{\i}\,:\,\i\in\II_{a,R}^{-}\cup\II_{b,R^{-1}}^{-}\}\to\{\,1_{\i'}\,:\,\i'\in\II_{b,R}^{+}\cup\II_{a,R^{-1}}^{+}\,\},\nonumber \\
\pi_{3} & :\{\,0_{\i}\,:\,\i\in\II_{b,R}^{-}\cup\II_{a,R^{-1}}^{+}\}\to\{\,1_{\i'}\,:\,\i'\in\II_{a,R}^{-}\cup\II_{b,R^{-1}}^{+}\,\},\nonumber \\
\pi_{4} & :\{\,0_{\i}\,:\,\i\in\II_{c,R}^{+}\cup\II_{b,R^{-1}}^{+}\}\to\{\,1_{\i'}\,:\,\i'\in\II_{b,R}^{-}\cup\II_{c,R^{-1}}^{-}\,\},\nonumber \\
\pi_{5} & :\{\,0_{\i}\,:\,\i\in\II_{d,R}^{+}\cup\II_{c,R^{-1}}^{-}\}\to\{\,1_{\i'}\,:\,\i'\in\II_{c,R}^{+}\cup\II_{d,R^{-1}}^{-}\,\},\nonumber \\
\pi_{6} & :\{\,0_{\i}\,:\,\i\in\II_{c,R}^{-}\cup\II_{d,R^{-1}}^{-}\}\to\{\,1_{\i'}\,:\,\i'\in\II_{d,R}^{+}\cup\II_{c,R^{-1}}^{+}\,\},\nonumber \\
\pi_{7} & :\{\,0_{\i}\,:\,\i\in\II_{d,R}^{-}\cup\II_{c,R^{-1}}^{+}\}\to\{\,1_{\i'}\,:\,\i'\in\II_{c,R}^{-}\cup\II_{d,R^{-1}}^{+}\,\},\nonumber \\
\pi_{8} & :\{\,0_{\i}\,:\,\i\in\II_{a,R}^{+}\cup\II_{d,R^{-1}}^{+}\}\to\{\,1_{\i'}\,:\,\i'\in\II_{d,R}^{-}\cup\II_{a,R^{-1}}^{-}\,\}.\label{eq:pi-maps}
\end{align}
We add an arc between any two interval endpoints that are mapped to
one another by some $\pi_{i}$. All the intervals added here are called
\emph{$\pi$-intervals}. The purpose of this construction is that
the conditions concerning $\pi_{i}$ in (\ref{eq:index-conditions})
correspond to the fact that \emph{two end-points of intervals connected
by a $\pi$-interval are assigned the same value in $[n]$ by the
relevant functions out of $\r_{f},\RR_{f},\V_{f},\v_{f},\UU_{f},\u_{f},\s_{f},\SS_{f}$
(at most one of these functions has any given interval endpoint in
its domain).}

The $\pi$-intervals together with the $R$-intervals and $R^{-1}$
intervals form a collection of loops that we call $R^{\pm}$-$\pi$-loops. 

\emph{\uline{\mbox{$\sigma$}-arcs and \mbox{$\tau$}-arcs.}}\emph{
}Recall from the previous sections that we view 
\[
\sigma_{f},\tau_{f}:\II_{f}^{+}\to\II_{f}^{-}.
\]
 We add an arc between each $0_{\i}$ and $1_{\i'}$ with $\sigma_{f}(\i)=\i'$
and between each $1_{\i}$ and $0_{\i'}$ with $\tau_{f}(\i)=\i'$.
These arcs are called $\sigma_{f}$-arcs and $\tau_{f}$-arcs respectively.
Any $\sigma_{f}$-arc (resp. $\tau_{f}$-arc) is also called a $\sigma$-arc
(resp. $\tau$-arc). Notice even though an arc is formally the same
as an interval, we distinguish these types of objects. The only arcs
that exist are $\sigma$-arcs and $\tau$-arcs. The purpose of this
construction is that the conditions pertaining to $\sigma_{f},\tau_{f}$
in (\ref{eq:index-conditions}) are equivalent to the fact that \emph{two
end-points of intervals connected by a $\sigma$-arc or $\tau$-arc
are assigned the same value in $[n]$ by the relevant functions out
of $\a,\r_{f},\RR_{f},\V_{f},\v_{f},\UU_{f},\u_{f},\s_{f},\SS_{f}$.}

After adding these arcs, every endpoint of an interval has exactly
one arc emanating from it. We have therefore now constructed a trivalent
graph 
\[
G(\{\sigma_{f},\tau_{f},\pi_{i}\}).
\]
Each vertex of the graph is an endpoint of two intervals and one arc.
The number of vertices of this graph is twice the total number of
$w$-intervals, $R$-intervals, and $R^{-1}$-intervals which is $2(|w|+8(k+\ell))$.
Therefore we have 
\begin{equation}
\chi(G(\{\sigma_{f},\tau_{f},\pi_{i}\}))=-(|w|+8(k+\ell)).\label{eq:chi-of-graph}
\end{equation}
(For general $g$, we have $\chi(G(\{\sigma_{f},\tau_{f},\pi_{i}\}))=-(|w|+4g(k+\ell))$.)
Moreover, \emph{the conditions in (\ref{eq:index-conditions}) are
now interpreted purely in terms of the combinatorics of this graph.}

\subsubsection*{Gluing in discs}

There are two types of cycles in $G(\{\sigma_{f},\tau_{f},\pi_{i}\})$
that we wish to consider:
\begin{itemize}
\item Cycles that alternate between following either a $w$-intermediate
interval or a $\pi$-interval and then either a $\sigma$-arc or a
$\tau$-arc. These cycles are disjoint from one another, and every
$\sigma$ or $\tau$-arc is contained in exactly one such cycle. We
call these cycles \emph{type-I cycles. }For every type-I cycle, we
glue a disc to $G(\{\sigma_{f},\tau_{f},\pi_{i}\})$ along its boundary,
following the cycle. These discs will be called \emph{type-I discs.
}(These are analogous to the $o$-discs of \cite{MPunitary}.) 
\item Cycles that alternate between following either a $w$-interval, an
$R$-interval, or an $R^{-1}$-interval and then either a $\sigma$-arc
or a $\tau$-arc. Again, these cycles are disjoint, and every $\sigma$
or $\tau$-arc is contained in exactly one such cycle. We call these
cycles \emph{type-II cycles. }For every type-II cycle, we glue a disc
to $G(\{\sigma_{f},\tau_{f},\pi_{i}\})$ identifying the boundary
of the disc with the cycle. These discs will be called \emph{type-II
discs.} (These are similar to the $z$-discs of \cite{MPunitary}.)
\end{itemize}
Because every interior of an interval meets exactly one of the glued-in
discs, and every arc has two boundary segments of discs glued to it,
the object resulting from gluing in these discs is a decorated topological
surface that we denote by 
\[
\Sigma(\{\sigma_{f},\tau_{f},\pi_{i}\}).
\]

An example of this construction is depicted in Figure \ref{fig:example-CW}.

\begin{figure}
\begin{centering}
\includegraphics[scale=0.35]{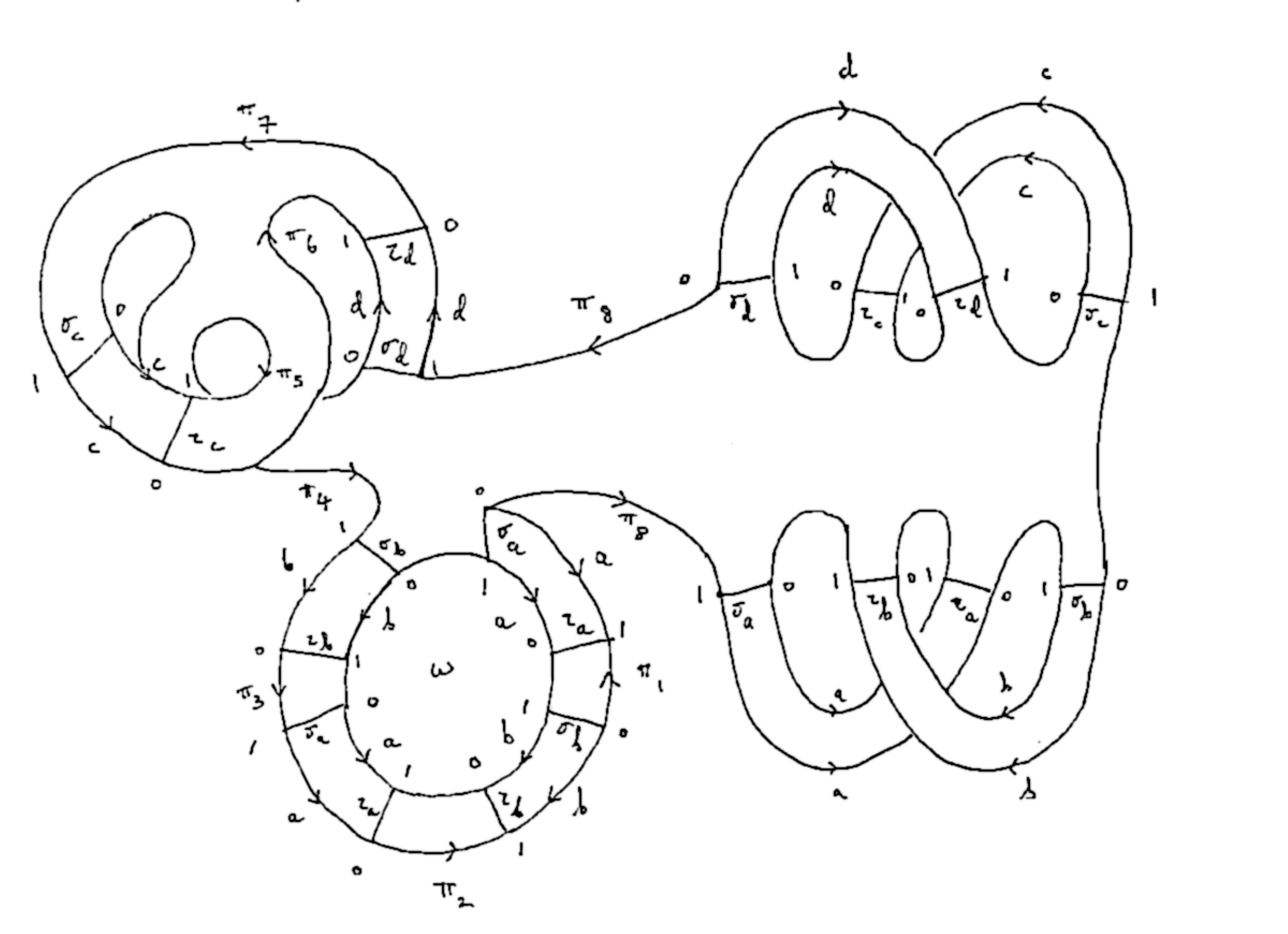}
\par\end{centering}
\caption{\label{fig:example-CW}This is a depiction of an example $\Sigma(\{\sigma_{f},\tau_{f},\pi_{i}\})$
for $w=ab^{-1}a^{-1}b$. The $\sigma$, $\tau$, and some of the $\pi_{i}$-arcs
are labeled along with the numbers ($0$ or $1$) of the points being
matched in the $w$-intervals. Each $w$-interval is also labeled
with its corresponding letter. Here $k=\ell=1$; $\pi_{8}$ is a transposition
and all other $\pi_{i}$ are the identity. There is one resulting
$R^{\pm}$-$\pi$-loop. In this example, for each $f\in\{a,b,c,d\}$,
$\sigma_{f}=\tau_{f}$. This means that all type II discs are rectangles.}

\end{figure}
The boundary components of $\Sigma(\{\sigma_{f},\tau_{f},\pi_{i}\})$
consist of the $w$-loop and the $R^{\pm}$-$\pi$-loops. It is not
hard to check that $\Sigma(\{\sigma_{f},\tau_{f},\pi_{i}\})$ is orientable
with an orientation compatible with the fixed orientations of the
boundary loops corresponding to traversing every $w$-interval or
$R^{\pm1}$-interval from $0$ to $1$.

We view the given CW-complex structure, and the assigned labelings
and directions of the intervals that now pave $\partial\Sigma$ as
part of the data of $\Sigma(\{\sigma_{f},\tau_{f},\pi_{i}\})$. The
number of discs of $\Sigma(\{\sigma_{f},\tau_{f},\pi_{i}\})$ is connected
to the quantities appearing in Proposition \ref{thm:first-integration-formula}
as follows.
\begin{lem}
\label{lem:I-discs}$\NN(\{\sigma_{f},\tau_{f},\pi_{i}\})=n^{\#\{\text{type-I discs of \ensuremath{\Sigma(\{\sigma_{f},\tau_{f},\pi_{i}\})}\}}}$.
\end{lem}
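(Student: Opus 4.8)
The plan is to show that $\NN(\{\sigma_f,\tau_f,\pi_i\})$, which counts assignments of index functions $\a,\r_f,\RR_f,\V_f,\v_f,\UU_f,\u_f,\s_f,\SS_f$ satisfying the matching conditions (\ref{eq:index-conditions}), is governed entirely by the combinatorics of the graph $G(\{\sigma_f,\tau_f,\pi_i\})$. First I would observe that the collection of all these index functions is exactly an assignment of a value in $[n]$ to every endpoint of every $w$-interval, $R$-interval, and $R^{-1}$-interval: each such endpoint lies in the domain of exactly one of the listed functions (this is the content of the bookkeeping set up in $\S\S$\ref{subsec:A-combinatorial-integration}, where the domains of $\r_f,\s_f,\RR_f,\SS_f,\u_f,\v_f,\UU_f,\V_f$ partition the interval endpoints, together with $\a$ on the $w$-interval endpoints). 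So an arbitrary tuple of index functions is the same data as an arbitrary function $\Phi$ from the vertex set of $G(\{\sigma_f,\tau_f,\pi_i\})$ to $[n]$.

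Next I would translate each of the three families of constraints into the statement ``$\Phi$ is constant on the endpoints joined by a given edge of $G$.'' The conditions $(\a,\r_f,\u_f,\RR_f,\UU_f)\to\sigma_f$ and $(\a,\s_f,\v_f,\SS_f,\V_f)\to\tau_f$ say, by the definitions in $\S\S$\ref{subsec:A-combinatorial-integration}, precisely that the two endpoints joined by each $\sigma_f$-arc (resp.\ $\tau_f$-arc) receive the same value. The conditions $\a\in\A(w)$ say the two endpoints joined by each $w$-intermediate-interval receive the same value. And the conditions involving $\pi_1,\dots,\pi_8$ in (\ref{eq:index-conditions}) say — as was already noted in the construction of the $\pi$-intervals — that the two endpoints joined by each $\pi$-interval receive the same value. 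Since the edges of $G(\{\sigma_f,\tau_f,\pi_i\})$ are exactly the $\sigma$-arcs, $\tau$-arcs, $w$-intermediate-intervals, and $\pi$-intervals (the $w$-, $R$-, and $R^{-1}$-intervals themselves are \emph{not} edges of $G$ in this bookkeeping — they are the objects whose endpoints are the vertices), the constraints say exactly that $\Phi$ is constant on each connected component of $G(\{\sigma_f,\tau_f,\pi_i\})$. Wait — I should double-check which 1-cells count as edges here; the graph $G$ is the trivalent graph whose vertices are interval endpoints and whose edges are the arcs and the connecting ($w$-intermediate and $\pi$) intervals, while the $w$/$R$/$R^{-1}$-intervals are contracted, i.e.\ each is itself a pair of identified vertices. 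Under that convention a valid $\Phi$ is exactly a function constant on connected components, so the count is $n^{c}$ where $c$ is the number of connected components of $G$.

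Finally I would identify $c$ with the number of type-I discs. By construction, the type-I cycles are exactly the cycles alternating between $w$-intermediate/$\pi$-intervals and $\sigma$/$\tau$-arcs, and every $\sigma$- or $\tau$-arc lies in exactly one type-I cycle; since every edge of $G$ is either such an arc or a $w$-intermediate/$\pi$-interval, and each vertex has exactly one arc and (after contracting the solid intervals) the solid intervals glue the remaining structure into these cycles, the type-I cycles are precisely the connected components of $G(\{\sigma_f,\tau_f,\pi_i\})$ once we contract each $w$/$R$/$R^{-1}$-interval to a point. Hence the number of connected components equals the number of type-I cycles, which equals the number of type-I discs, giving $\NN(\{\sigma_f,\tau_f,\pi_i\}) = n^{\#\{\text{type-I discs}\}}$. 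The main obstacle I anticipate is purely bookkeeping: being careful about exactly which 1-cells of the picture are ``edges'' of $G$ versus which are contracted, and verifying cleanly that following the solid intervals together with the arcs-and-connectors really does trace out the type-I cycles as connected components; once the dictionary between index functions and vertex-colourings is pinned down, the equality is immediate.
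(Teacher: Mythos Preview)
Your overall strategy --- identify each tuple of index functions with a colouring of interval endpoints by $[n]$, translate every constraint in (\ref{eq:index-conditions}) into ``equal colours at the two endpoints of a specified edge'', and then count --- is exactly the paper's argument. The paper's proof is a single sentence saying that the constraints are precisely that the index functions are constant along each type-I cycle; you are spelling this out in more detail.

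However, your bookkeeping in the second and third paragraphs contains a genuine error that, taken literally, gives the wrong count. The graph $G(\{\sigma_f,\tau_f,\pi_i\})$ is trivalent: its vertices are the endpoints of the $w$-, $R$-, and $R^{-1}$-intervals, and its edges are \emph{all three} of (i) those $w$/$R$/$R^{-1}$-intervals, (ii) the $w$-intermediate and $\pi$-intervals, and (iii) the $\sigma$- and $\tau$-arcs. The constraints in (\ref{eq:index-conditions}) concern only edges of types (ii) and (iii); there is \emph{no} constraint forcing the two endpoints of a $w$/$R$/$R^{-1}$-interval to carry the same index (e.g.\ the two ends of a $w$-interval carry $i_j$ and $i_{j+1}$). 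So the relevant object is the subgraph of $G$ obtained by \emph{deleting} the type-(i) edges, which is $2$-regular and whose components are exactly the type-I cycles. You instead \emph{contract} the type-(i) edges. Contraction identifies the two endpoints of each $w$/$R$/$R^{-1}$-interval, and since those endpoints generally lie on distinct type-I cycles (indeed they are linked through the type-II cycles), contraction merges several type-I cycles into a single connected component. For a connected surface $\Sigma(\{\sigma_f,\tau_f,\pi_i\})$ with many type-I discs your count would collapse to $n^1$, which is wrong. Replace ``contract'' by ``delete'' (equivalently: work with the $2$-regular subgraph on the arcs and the intermediate/$\pi$-intervals) and your argument goes through and matches the paper's.
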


\begin{proof}
The constraints on the functions $\a,\r_{f},\RR_{f},\V_{f},\v_{f},\UU_{f},\u_{f},\s_{f},\SS_{f}$
in (\ref{eq:index-conditions}) now correspond to the fact that altogether,
they assign the same value in $[n]$ to every interval end-point in
the same type-I-cycle, and there are no other constraints between
them. 
\end{proof}
The quantities $|\sigma_{f}\tau_{f}^{-1}|$ in (\ref{eq:J_n-bound-combinatorial})
can also be related to $\Sigma(\{\sigma_{f},\tau_{f},\pi_{i}\})$
as follows.
\begin{lem}
\label{lem:II-discs}We have
\[
\prod_{f\in\{a,b,c,d\}}n^{-|\sigma_{f}\tau_{f}^{-1}|}=n^{-4(k+\ell)-\frac{|w|}{2}}n^{\#\{\text{type-II discs of \ensuremath{\Sigma(\{\sigma_{f},\tau_{f},\pi_{i}\})}\}}}.
\]
\end{lem}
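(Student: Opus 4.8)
The plan is to count, for each $f\in\{a,b,c,d\}$, the number of type-II discs that are ``local'' to $f$ (i.e.\ whose boundary arcs are $\sigma_f$- or $\tau_f$-arcs), relate this count to the cycle structure of the permutation $\sigma_f\tau_f^{-1}$, and then sum over $f$. First I would note that each type-II disc has boundary alternating between intervals (of type $w$-, $R$-, or $R^{-1}$-) and $\sigma$- or $\tau$-arcs; since each interval carries a fixed label $f\in\{a,b,c,d\}$, and $\sigma$-arcs/$\tau$-arcs connect only endpoints of intervals with the same label, every type-II disc has all its arcs belonging to a single $f$. Thus $\#\{\text{type-II discs}\}=\sum_{f}\#\{\text{type-II discs with arcs labelled }f\}$, and it suffices to prove for each $f$ that
\[
\#\{\text{type-II discs labelled }f\}=(k+\ell+p_f)-|\sigma_f\tau_f^{-1}|,
\]
since then summing over the four values of $f$ and using $\sum_f p_f=\tfrac{|w|}{2}$ gives $\#\{\text{type-II discs}\}=4(k+\ell)+\tfrac{|w|}{2}+\sum_f\bigl(-|\sigma_f\tau_f^{-1}|\bigr)$, which rearranges to the claimed identity after taking $n$ to the relevant power.

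The key step is the identity $\#\{\text{type-II discs labelled }f\}=(k+\ell+p_f)-|\sigma_f\tau_f^{-1}|$. To see it, I would trace a type-II cycle labelled $f$: starting at the $0$-endpoint of an interval $\i\in\II_f^+$, follow the interval to its $1$-endpoint, then follow the unique arc at that endpoint. An interval in $\II_f^+$ is traversed $0\to1$ and a $\tau_f$-arc emanates from its $1$-endpoint connecting $1_\i$ to $0_{\tau_f(\i)}$; but $\tau_f(\i)\in\II_f^-$, so $0_{\tau_f(\i)}$ is the $0$-endpoint of an interval in $\II_f^-$, which is traversed $1\to0$ (its assigned direction), and from its $0$-endpoint — wait, we arrived at its $0$-endpoint, so in the assigned direction we traverse it to the $1$-endpoint. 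At the $1$-endpoint of an interval in $\II_f^-$ there is a $\sigma_f$-arc connecting $1_{\i'}$ to $0_{\sigma_f^{-1}(\i')}$, landing us at the $0$-endpoint of an interval in $\II_f^+$ again. So one ``loop'' of a type-II cycle corresponds to applying $\sigma_f^{-1}\circ\tau_f$ (or its inverse, depending on bookkeeping conventions) to the index in $\II_f^+\cong[k+\ell+p_f]$, and the type-II cycles labelled $f$ are in bijection with the cycles of the permutation $\sigma_f\tau_f^{-1}\in S_{k+\ell+p_f}$ (up to the standard fact that $\sigma_f^{-1}\tau_f$ and $\tau_f\sigma_f^{-1}$ and $\sigma_f\tau_f^{-1}$ all have the same cycle type). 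The number of cycles of a permutation $\pi\in S_m$ equals $m-|\pi|$ where $|\pi|$ is the minimal number of transpositions in a factorization; hence $\#\{\text{type-II discs labelled }f\}=(k+\ell+p_f)-|\sigma_f\tau_f^{-1}|$.

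The main obstacle I anticipate is getting the combinatorial bookkeeping exactly right: which of $\sigma_f^{-1}\tau_f$, $\tau_f\sigma_f^{-1}$, $\sigma_f\tau_f^{-1}$ governs the cycle-walk, how the identifications $\II_f^+\cong[k+\ell+p_f]\cong\II_f^-$ interact with the intrinsic-versus-assigned directions of the various intervals ($w$-intervals in $\II_{f,w}^-$, $R$-intervals in $\II_{f,R}^-$, $R^{-1}$-intervals in $\II_{f,R^{-1}}^-$ are all assigned-directed $1\to0$), and verifying that the walk described does close up into exactly the type-II cycles and nothing else. This is entirely routine but fiddly; the conceptual content — that type-II disc count minus a fixed quantity equals a negative sum of word-lengths — is exactly the standard genus-expansion/Euler-characteristic identity from the Weingarten calculus (as in \cite{CS,MPunitary}), just applied label-by-label. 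I would present the walk explicitly for one representative interval type and then remark that the other interval types are handled identically, since the $w$-, $R$-, and $R^{-1}$-intervals labelled $f$ all play symmetric roles in a type-II cycle. Finally, raising $n$ to the power of both sides of $\#\{\text{type-II discs}\}=4(k+\ell)+\tfrac{|w|}{2}-\sum_f|\sigma_f\tau_f^{-1}|$ and rearranging yields the statement of the lemma.
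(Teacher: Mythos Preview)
Your proposal is correct and follows essentially the same approach as the paper: the paper's proof also uses the identity $|\sigma_f\tau_f^{-1}|=k+\ell+p_f-\#\{\text{cycles of }\sigma_f\tau_f^{-1}\}$, asserts the bijection between cycles of $\sigma_f\tau_f^{-1}$ (over all $f$) and type-II discs, and sums using $\sum_f p_f=\tfrac{|w|}{2}$. Your version simply spells out the cycle-walk and the per-$f$ localization in more detail than the paper does.
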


\begin{proof}
Recalling the definition of $|\sigma_{f}\tau_{f}^{-1}|$ from Proposition
\ref{prop:wg-bound}, we can also write
\[
|\sigma_{f}\tau_{f}^{-1}|=k+\ell+p_{f}-\#\{\text{cycles of \ensuremath{\sigma_{f}\tau_{f}^{-1}\}}.}
\]
The cycles of $\{\sigma_{f}\tau_{f}^{-1}:f\in\{a,b,c,d\}\}$ are in
1:1 correspondence with the type-II cycles of $\Sigma(\{\sigma_{f},\tau_{f},\pi_{i}\})$
and hence also the type-II discs. Therefore 
\begin{align*}
\prod_{f\in\{a,b,c,d\}}n^{-|\sigma_{f}\tau_{f}^{-1}|} & =n^{-4(k+\ell)}n^{\sum_{f\in\{a,b,c,d\}}(-p_{f}+\#\{\text{cycles of \ensuremath{\sigma_{f}\tau_{f}^{-1}\})}}}\\
 & =n^{-4(k+\ell)-\frac{|w|}{2}}n^{\#\{\text{type-II discs of \ensuremath{\Sigma(\{\sigma_{f},\tau_{f},\pi_{i}\})}\}}}.
\end{align*}
\end{proof}
We are now able to prove the following.
\begin{thm}
\label{thm:genus-exp-bound}For $k+\ell>0$, $\mu\vdash k$ and $\nu\vdash\ell$,
$w\in[\F_{4},\F_{4}]$, we have
\begin{align*}
\J_{n}(w,\mu,\nu)\ll_{w,k,\ell} & \sum_{\{\sigma_{f},\tau_{f},\pi_{i}\}\in\match(w,k,\ell)}n^{-\sum_{i=1}^{8}\|\pi_{i}\|_{k,\ell}}n^{\chi(\Sigma(\{\sigma_{f},\tau_{f},\pi_{i}\}))}.
\end{align*}
\end{thm}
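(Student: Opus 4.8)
The plan is to assemble the three ingredients already in hand---Corollary \ref{cor:J_n-bound-comb}, Lemma \ref{lem:I-discs}, and Lemma \ref{lem:II-discs}---and to reconcile the exponents of $n$ by a bookkeeping computation of the Euler characteristic of the surface $\Sigma(\{\sigma_{f},\tau_{f},\pi_{i}\})$ in terms of the number of discs glued in. First I would recall from Corollary \ref{cor:J_n-bound-comb} that
\[
\J_{n}(w,[\mu,\nu])\ll_{k,\ell,w}n^{-4k-4\ell-\frac{|w|}{2}}\sum_{\{\sigma_{f},\tau_{f},\pi_{i}\}\in\match(w,k,\ell)}n^{-\sum_{f}|\sigma_{f}\tau_{f}^{-1}|-\sum_{i=1}^{8}\|\pi_{i}\|_{k,\ell}}\NN(\{\sigma_{f},\tau_{f},\pi_{i}\}),
\]
and then substitute $\NN(\{\sigma_{f},\tau_{f},\pi_{i}\})=n^{\#\{\text{type-I discs}\}}$ from Lemma \ref{lem:I-discs} and $\prod_{f}n^{-|\sigma_{f}\tau_{f}^{-1}|}=n^{-4(k+\ell)-\frac{|w|}{2}}n^{\#\{\text{type-II discs}\}}$ from Lemma \ref{lem:II-discs}. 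Wait---the two factors of $n^{-4(k+\ell)-|w|/2}$ would then seem to double up; I need to be careful that Corollary \ref{cor:J_n-bound-comb} already contains one such factor coming from the leading powers $n^{-k-\ell}$ in the Weingarten bound of Proposition \ref{prop:wg-bound} (one factor of $n^{-(k+\ell+p_{f})}$ per $f$, summing to $n^{-4(k+\ell)-|w|/2}$), so that the $\prod_f n^{-|\sigma_f\tau_f^{-1}|}$ in the corollary's exponent is exactly what Lemma \ref{lem:II-discs} rewrites, and there is no genuine double counting---the overall prefactor is $n^{-4k-4\ell-|w|/2}$ and the $|\sigma_f\tau_f^{-1}|$ sum turns into $n^{-4(k+\ell)-|w|/2+\#\{\text{type-II discs}\}}$, which after combining gives prefactor contributions that cancel one copy of $n^{-4(k+\ell)-|w|/2}$ against what is implicit. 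I will have to track the exact statement of Corollary \ref{cor:J_n-bound-comb} letter by letter here; this is the step most prone to an off-by-a-power slip.

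Once the substitutions are made, the combined power of $n$ attached to a given matching datum is
\[
-4k-4\ell-\tfrac{|w|}{2}+\big(-4(k+\ell)-\tfrac{|w|}{2}+\#\{\text{type-II discs}\}\big)+\#\{\text{type-I discs}\}-\sum_{i=1}^{8}\|\pi_{i}\|_{k,\ell},
\]
but reading Corollary \ref{cor:J_n-bound-comb} correctly the $n^{-4k-4\ell-|w|/2}$ is the prefactor and the sum's exponent is $-\sum_f|\sigma_f\tau_f^{-1}|-\sum_i\|\pi_i\|_{k,\ell}$, so the total exponent is $-4k-4\ell-\tfrac{|w|}{2}-\sum_f|\sigma_f\tau_f^{-1}|+\#\{\text{type-I discs}\}-\sum_i\|\pi_i\|_{k,\ell}$, and Lemma \ref{lem:II-discs} rewrites $-4k-4\ell-\tfrac{|w|}{2}-\sum_f|\sigma_f\tau_f^{-1}|$ as $-8(k+\ell)-|w|+\#\{\text{type-II discs}\}$. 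So the total exponent is $\#\{\text{type-I discs}\}+\#\{\text{type-II discs}\}-|w|-8(k+\ell)-\sum_i\|\pi_i\|_{k,\ell}$. Now I invoke the CW structure of $\Sigma(\{\sigma_f,\tau_f,\pi_i\})$: its $1$-skeleton is the trivalent graph $G(\{\sigma_f,\tau_f,\pi_i\})$ with $\chi(G)=-(|w|+8(k+\ell))$ by \eqref{eq:chi-of-graph}, and the $2$-cells are precisely the type-I and type-II discs. Hence
\[
\chi(\Sigma(\{\sigma_f,\tau_f,\pi_i\}))=\chi(G)+\#\{\text{type-I discs}\}+\#\{\text{type-II discs}\}=-(|w|+8(k+\ell))+\#\{\text{type-I discs}\}+\#\{\text{type-II discs}\},
\]
so the total exponent equals exactly $\chi(\Sigma(\{\sigma_f,\tau_f,\pi_i\}))-\sum_{i=1}^8\|\pi_i\|_{k,\ell}$, which is the claimed bound after summing over the finite set $\match(w,k,\ell)$ and absorbing the $O_{k,\ell,w}(1)$ constant.

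Concretely I would structure the write-up as: (i) quote Corollary \ref{cor:J_n-bound-comb}; (ii) apply Lemma \ref{lem:I-discs} to replace $\NN$; (iii) apply Lemma \ref{lem:II-discs} to rewrite the $\prod_f n^{-|\sigma_f\tau_f^{-1}|}$ factor together with the explicit prefactor; (iv) observe that the resulting exponent of $n$ is $\#\{\text{type-I discs}\}+\#\{\text{type-II discs}\}-(|w|+8(k+\ell))-\sum_i\|\pi_i\|_{k,\ell}$; (v) identify this, via \eqref{eq:chi-of-graph} and the fact that $\Sigma$ is obtained from $G$ by gluing exactly the type-I and type-II discs as $2$-cells, with $\chi(\Sigma(\{\sigma_f,\tau_f,\pi_i\}))-\sum_i\|\pi_i\|_{k,\ell}$; (vi) sum over the finitely many matching data. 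The only real subtlety---and the main obstacle---is making sure the prefactor arithmetic in step (iii) does not inadvertently count the factor $n^{-4(k+\ell)-|w|/2}$ twice; I will state explicitly that the $-4k-4\ell-\tfrac{|w|}{2}$ in Corollary \ref{cor:J_n-bound-comb} is the \emph{only} such prefactor and that the $\sum_f|\sigma_f\tau_f^{-1}|$ appears separately in the exponent of the sum, so that Lemma \ref{lem:II-discs} is applied to that separate term alone. Everything else is routine Euler-characteristic bookkeeping; no new analytic input is needed.
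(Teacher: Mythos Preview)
Your proposal is correct and follows exactly the same approach as the paper's proof: combine Corollary \ref{cor:J_n-bound-comb} with Lemmas \ref{lem:I-discs} and \ref{lem:II-discs}, then identify the resulting exponent $-8(k+\ell)-|w|+\#\{\text{discs}\}$ with $\chi(G)+\#\{\text{discs}\}=\chi(\Sigma)$ via \eqref{eq:chi-of-graph}. Your extended worry about double-counting the factor $n^{-4(k+\ell)-|w|/2}$ is unfounded---as you yourself work out, the prefactor in Corollary \ref{cor:J_n-bound-comb} and the prefactor produced by Lemma \ref{lem:II-discs} each contribute one copy, and their product $n^{-8(k+\ell)-|w|}$ is precisely $n^{\chi(G)}$; the paper's proof simply performs this combination without comment.
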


\begin{proof}
Combining Lemmas \ref{lem:I-discs} and \ref{lem:II-discs} with Corollary
\ref{cor:J_n-bound-comb} gives
\begin{align*}
\J_{n}(w,\mu,\nu)\ll_{w,k,\ell} & n^{-8k-8\ell-|w|}\\
\times & \sum_{\{\sigma_{f},\tau_{f},\pi_{i}\}\in\match(w,k,\ell)}n^{-\sum_{i=1}^{8}\|\pi_{i}\|_{k,\ell}}n^{\#\{\text{discs of \ensuremath{\Sigma}(\{\ensuremath{\sigma_{f}},\ensuremath{\tau_{f}},\ensuremath{\pi_{i}}\})}\}}.
\end{align*}
Then from (\ref{eq:chi-of-graph}) we obtain
\begin{align*}
\J_{n}(w,\mu,\nu)\ll_{w,k,\ell} & \sum_{\{\sigma_{f},\tau_{f},\pi_{i}\}\in\match(w,k,\ell)}\\
 & n^{-\sum_{i=1}^{8}\|\pi_{i}\|_{k,\ell}}n^{\chi(G(\{\sigma_{f},\tau_{f},\pi_{i}\}))+\#\{\text{discs of \ensuremath{\Sigma}(\{\ensuremath{\sigma_{f}},\ensuremath{\tau_{f}},\ensuremath{\pi_{i}}\})}\}}\\
= & \sum_{\{\sigma_{f},\tau_{f},\pi_{i}\}\in\match(w,k,\ell)}n^{-\sum_{i=1}^{8}\|\pi_{i}\|_{k,\ell}}n^{\chi(\Sigma(\{\sigma_{f},\tau_{f},\pi_{i}\}))}.
\end{align*}
\end{proof}

\subsection{Two simplifying surgeries\label{subsec:Two-simplifying-surgeries}}

Theorem \ref{thm:genus-exp-bound} suggests that we now bound 
\[
\chi(\Sigma(\{\sigma_{f},\tau_{f},\pi_{i}\}))-\sum_{i=1}^{8}\|\pi_{i}\|_{k,\ell}
\]
 for all $\{\sigma_{f},\tau_{f},\pi_{i}\}\in\match(w,k,\ell)$. To
do this, we make some observations that simplify the task. If $C$
is a simple closed curve in a surface $S$, then \emph{compressing
$S$ along $C$} means that we cut $S$ along $C$ and then glue discs
to cap off any new boundary components created by the cut.

Suppose that we are given $\{\sigma_{f},\tau_{f},\pi_{i}\}\in\match(w,k,\ell)$.
Then $\{\sigma_{f},\sigma_{f},\pi_{i}\}$ is also in $\match(w,k,\ell)$
(the \textbf{forbidden matching}\textbf{\emph{ }}property continues
to hold). It is not hard to see that
\[
\chi(\Sigma(\{\sigma_{f},\sigma_{f},\pi_{i}\}))\geq\chi(\Sigma(\{\sigma_{f},\tau_{f},\pi_{i}\})).
\]
 Indeed, the $\tau_{f}$ arcs can be replaced by $\sigma_{f}$-parallel
arcs inside the type-II discs of $\Sigma(\{\sigma_{f},\tau_{f},\pi_{i}\})$.
The resulting surface's arcs may not cut the surface into discs, but
this can be fixed by (possibly repeatedly) compressing the surface
along simple closed curves disjoint from the arcs, leaving the combinatorial
data of the arcs unchanged but only potentially increasing the Euler
characteristic. 

It remains to deal with the sum $\sum_{i=1}^{8}\|\pi_{i}\|_{k,\ell}$.

Suppose again that an arbitrary $\{\sigma_{f},\tau_{f},\pi_{i}\}\in\match(w,k,\ell)$
is given. For each $i\in[8]$ write
\[
\pi_{i}=\pi_{i}^{*}\sigma_{i}
\]
where $\pi_{i}^{*}\in S_{k}\times S_{\ell}$, $\sigma_{i}=(\pi_{i}^{*})^{-1}\pi_{i}\in S_{k+\ell}$,
and $|\sigma_{i}|=\|\pi_{i}\|_{k,\ell}$. Let $X_{0}\eqdf\Sigma(\{\sigma_{f},\tau_{f},\pi_{i}\})$. 

Take $\Sigma(\{\sigma_{f},\tau_{f},\pi_{i}\})$ and add to it all
the $\pi_{i}^{*}$-intervals that would have been added if $\pi_{i}$
was replaced by $\pi_{i}^{*}$ for each $i\in[8]$ in its construction.
The resulting object $X_{1}$ is the decorated surface $X_{0}$ together
with a collection of $\pi_{i}^{*}$-intervals with endpoints in the
boundary of $X_{0}$, and interiors disjoint from $X_{0}$. This adds
$8(k+\ell)$ edges to $X_{0}$ and hence 
\[
\chi(X_{1})=\chi(\Sigma(\{\sigma_{f},\tau_{f},\pi_{i}\}))-8(k+\ell).
\]
 Now we consider all cycles that for any fixed $i\in[8]$, alternate
between $\pi_{i}$-intervals and $\pi_{i}^{*}$-intervals. The number
of these cycles is the total number of cycles of the permutations
$\{\,(\pi_{i}^{*})^{-1}\pi_{i}\,:\,i\in[8]\,\}$. On the other hand,
the number of cycles of $(\pi_{i}^{*})^{-1}\pi_{i}$ is 
\[
k+\ell-|(\pi_{i}^{*})^{-1}\pi_{i}|=k+\ell-|\sigma_{i}|=k+\ell-\|\pi_{i}\|_{k,\ell}
\]
So in total there are $8(k+\ell)-\sum_{i}\|\pi_{i}\|_{k,\ell}$ of
these cycles. For every such cycle, we glue a disc along its boundary
to the cycle. The resulting object is denoted $X_{2}$. Now, $X_{2}$
is a topological surface, and we added $8(k+\ell)-\sum_{i}\|\pi_{i}\|_{k,\ell}$
discs to $X_{1}$ to form $X_{2}$, so
\[
\chi(X_{2})=\chi(X_{1})+8(k+\ell)-\sum_{i}\|\pi_{i}\|_{k,\ell}=\chi(\Sigma(\{\sigma_{f},\tau_{f},\pi_{i}\}))-\sum_{i}\|\pi_{i}\|_{k,\ell}.
\]
Now `forget' all the original $\pi_{i}$-intervals from $X_{2}$ to
form $X_{3}$. The surface $X_{3}$ is a decorated surface in the
same sense as $X_{0}$, except the connected components of $X_{3}-\{\text{arcs}\}$
may not be discs. Similarly to before, by sequentially compressing
$X_{3}$ along non-nullhomotopic simple closed curves disjoint from
arcs, if they exist, we obtain a new decorated surface $X_{4}$. See
Figure \ref{fig:surgery} for an illustration of this surgery taking
place. Moreover, and this is the main point, $X_{4}$ is the same
as $\Sigma(\{\sigma_{f},\tau_{f},\pi_{i}^{*}\})$ in the sense that
they are related by a decoration-respecting cellular homeomorphism.
Compression can only increase the Euler characteristic, so we obtain
\[
\chi(\Sigma(\{\sigma_{f},\tau_{f},\pi_{i}^{*}\}))\geq\chi(X_{3})=\chi(X_{2})=\chi(\Sigma(\{\sigma_{f},\tau_{f},\pi_{i}\})-\sum_{i}\|\pi_{i}\|_{k,\ell}.
\]

\begin{figure}

\begin{centering}
\includegraphics[scale=0.3]{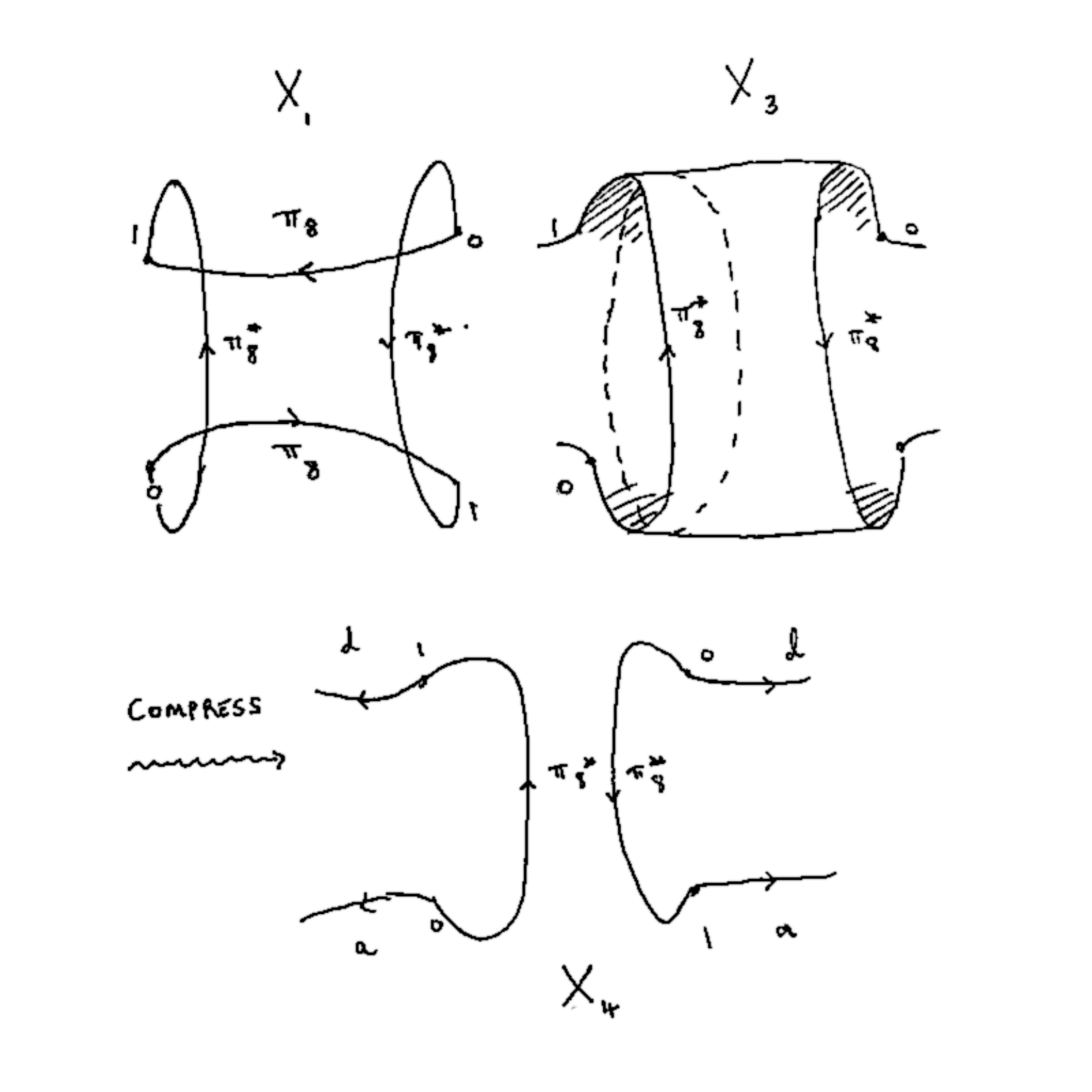}\caption{\label{fig:surgery}This is a local illustration of the second type
of simplifying surgery, precisely in the context of Figure \ref{fig:example-CW}.
The dashed simple closed curve in $X_{3}$ is disjoint from any arcs,
and cutting along this curve and gluing in two discs yields $X_{4}$.
Going back to Figure \ref{fig:example-CW} again, the net effect of
this surgery is to cut the left half from the right half. }
\par\end{centering}
\end{figure}

Combining these two arguments proves the following proposition.
\begin{prop}
For any given $\{\sigma_{f},\tau_{f},\pi_{i}\}$, there exist $\pi_{i}^{*}\in S_{k}\times S_{\ell}$
for $i\in[8]$ such that 
\begin{align*}
\chi(\Sigma(\{\sigma_{f},\sigma_{f},\pi_{i}^{*}\}))-\sum_{i=1}^{8}\|\pi_{i}^{*}\|_{k,\ell} & =\chi(\Sigma(\{\sigma_{f},\sigma_{f},\pi_{i}^{*}\}))\\
 & \geq\chi(\Sigma(\{\sigma_{f},\tau_{f},\pi_{i}\}))-\sum_{i=1}^{8}\|\pi_{i}\|_{k,\ell}.
\end{align*}
\end{prop}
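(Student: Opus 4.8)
The plan is to chain the two surgery constructions just developed, in the order: first remove the contribution of $\sum_{i}\|\pi_{i}\|_{k,\ell}$, then replace $\tau_{f}$ by $\sigma_{f}$. Fix a datum $\{\sigma_{f},\tau_{f},\pi_{i}\}\in\match(w,k,\ell)$ and, for each $i\in[8]$, write $\pi_{i}=\pi_{i}^{*}\sigma_{i}$ with $\pi_{i}^{*}\in S_{k}\times S_{\ell}$ and $|\sigma_{i}|=\|\pi_{i}\|_{k,\ell}$, exactly as above. The triple $\{\sigma_{f},\tau_{f},\pi_{i}^{*}\}$ is again a matching datum, since the \textbf{forbidden matchings} property only concerns $\sigma_{f},\tau_{f}$. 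Running the second surgery — adjoin the $\pi_{i}^{*}$-intervals, glue a disc along each of the $8(k+\ell)-\sum_{i}\|\pi_{i}\|_{k,\ell}$ cycles alternating between $\pi_{i}$- and $\pi_{i}^{*}$-intervals, delete the original $\pi_{i}$-intervals, and then compress along non-nullhomotopic simple closed curves disjoint from the arcs, if any exist, until the complementary regions are discs — produces a decorated surface related to $\Sigma(\{\sigma_{f},\tau_{f},\pi_{i}^{*}\})$ by a decoration-respecting cellular homeomorphism and yields
\[
\chi\bigl(\Sigma(\{\sigma_{f},\tau_{f},\pi_{i}^{*}\})\bigr)\ \geq\ \chi\bigl(\Sigma(\{\sigma_{f},\tau_{f},\pi_{i}\})\bigr)-\sum_{i=1}^{8}\|\pi_{i}\|_{k,\ell}.
\]

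Next I would apply the first surgery to the datum $\{\sigma_{f},\tau_{f},\pi_{i}^{*}\}$: inside each type-II disc, replace the $\tau_{f}$-arc it contains by a $\sigma_{f}$-parallel arc, then compress along simple closed curves disjoint from the arcs until the complementary regions are discs again. The outcome is $\Sigma(\{\sigma_{f},\sigma_{f},\pi_{i}^{*}\})$, and since compression only raises the Euler characteristic,
\[
\chi\bigl(\Sigma(\{\sigma_{f},\sigma_{f},\pi_{i}^{*}\})\bigr)\ \geq\ \chi\bigl(\Sigma(\{\sigma_{f},\tau_{f},\pi_{i}^{*}\})\bigr).
\]
Finally, each $\pi_{i}^{*}$ lies in $S_{k}\times S_{\ell}$, so $\|\pi_{i}^{*}\|_{k,\ell}=0$ and $\sum_{i=1}^{8}\|\pi_{i}^{*}\|_{k,\ell}=0$; thus the left-hand equality in the statement is automatic, and concatenating the two displayed inequalities gives
\[
\chi\bigl(\Sigma(\{\sigma_{f},\sigma_{f},\pi_{i}^{*}\})\bigr)-\sum_{i=1}^{8}\|\pi_{i}^{*}\|_{k,\ell}\ \geq\ \chi\bigl(\Sigma(\{\sigma_{f},\tau_{f},\pi_{i}\})\bigr)-\sum_{i=1}^{8}\|\pi_{i}\|_{k,\ell},
\]
which is the claim.

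Since both surgeries are already in hand, the combination itself is routine. The part that carries the real content — and the one I expect to need the most care, though it sits inside the two preceding surgery paragraphs rather than in the combination — is the verification that the compressions can always be arranged so as to leave the combinatorial decoration (the labels and assigned directions of the intervals paving the boundary, together with the $\sigma$-/$\tau$-arc structure) untouched, so that the surfaces obtained are cellularly homeomorphic, and not merely homotopy equivalent, to $\Sigma(\{\sigma_{f},\tau_{f},\pi_{i}^{*}\})$ and $\Sigma(\{\sigma_{f},\sigma_{f},\pi_{i}^{*}\})$; together with the Euler-characteristic bookkeeping in the first step, where the $8(k+\ell)$ adjoined edges are offset against the $8(k+\ell)-\sum_{i}\|\pi_{i}\|_{k,\ell}$ adjoined discs, for a net change of $-\sum_{i}\|\pi_{i}\|_{k,\ell}$.
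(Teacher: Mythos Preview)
Your proposal is correct and follows essentially the same approach as the paper: the proposition is obtained by concatenating the two surgery constructions already established in the preceding paragraphs, and your chaining (first replace $\pi_i$ by $\pi_i^*$, then $\tau_f$ by $\sigma_f$) is one of the two valid orders. The paper simply writes ``Combining these two arguments proves the following proposition'' without spelling out the order, so your version is a faithful fleshing-out of that sentence.
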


This has the following immediate corollary when combined with Theorem
\ref{thm:genus-exp-bound}. Let 
\begin{align*}
\match^{*}(w,k,\ell)
\end{align*}
 denote the subset of $\match(w,k,\ell)$ consisting of $\{\sigma_{f},\sigma_{f},\pi_{i}\}$
(i.e. $\sigma_{f}=\tau_{f}$ for each $f\in\{a,b,c,d\}$) with $\pi_{i}\in S_{k}\times S_{\ell}$
for each $i\in[8]$.
\begin{cor}
\label{cor:goodmatchings}For $k+\ell>0$, $\mu\vdash k$ and $\nu\vdash\ell$,
$w\in[\F_{4},\F_{4}]$, we have
\begin{align*}
\J_{n}(w,\mu,\nu)\ll_{w,k,\ell}n^{\max_{\{\sigma_{f},\sigma_{f},\pi_{i}\}\in\match^{*}(w,k,\ell)}\chi(\Sigma(\{\sigma_{f},\sigma_{f},\pi_{i}\}))} & .
\end{align*}
\end{cor}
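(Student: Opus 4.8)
The plan is to obtain the Corollary immediately from Theorem~\ref{thm:genus-exp-bound} together with the Proposition on the two simplifying surgeries proved just above, by a term-by-term comparison of the two sides. First I would record that $\match(w,k,\ell)$ is a finite set whose cardinality is bounded purely in terms of $w,k,\ell$: each $\sigma_{f}$ and $\tau_{f}$ ranges over the finite group $S_{k+\ell+p_{f}}$, each $\pi_{i}$ over $S_{k+\ell}$, and the integers $p_{f}$ depend only on $w$. It therefore suffices to bound each individual summand on the right-hand side of Theorem~\ref{thm:genus-exp-bound} by $n^{\max_{\{\sigma_{f},\sigma_{f},\pi_{i}\}\in\match^{*}(w,k,\ell)}\chi(\Sigma(\{\sigma_{f},\sigma_{f},\pi_{i}\}))}$ and then absorb the bounded number of summands into the implied constant $\ll_{w,k,\ell}$.

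To bound one such summand, fix $\{\sigma_{f},\tau_{f},\pi_{i}\}\in\match(w,k,\ell)$ and apply the Proposition to obtain permutations $\pi_{i}^{*}\in S_{k}\times S_{\ell}$ for $i\in[8]$ with
\[
\chi(\Sigma(\{\sigma_{f},\sigma_{f},\pi_{i}^{*}\}))\;\geq\;\chi(\Sigma(\{\sigma_{f},\tau_{f},\pi_{i}\}))-\sum_{i=1}^{8}\|\pi_{i}\|_{k,\ell}.
\]
Then I would check that $\{\sigma_{f},\sigma_{f},\pi_{i}^{*}\}$ genuinely lies in $\match^{*}(w,k,\ell)$: the \textbf{forbidden matchings} property is a condition imposed on each of the individual permutations $\sigma_{f}$ and $\tau_{f}$ separately, so replacing every $\tau_{f}$ by the corresponding $\sigma_{f}$ cannot destroy it, and the $\pi_{i}^{*}$ lie in $S_{k}\times S_{\ell}$ by the Proposition's conclusion. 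These are precisely the two membership requirements of $\match^{*}(w,k,\ell)$; in particular $\|\pi_{i}^{*}\|_{k,\ell}=0$, so the displayed inequality yields
\[
\chi(\Sigma(\{\sigma_{f},\tau_{f},\pi_{i}\}))-\sum_{i=1}^{8}\|\pi_{i}\|_{k,\ell}\;\leq\;\max_{\{\sigma_{f},\sigma_{f},\pi_{i}\}\in\match^{*}(w,k,\ell)}\chi(\Sigma(\{\sigma_{f},\sigma_{f},\pi_{i}\})).
\]
This maximum is over a nonempty set, since applying the Proposition to any element of the nonempty set $\match(w,k,\ell)$ produces an element of $\match^{*}(w,k,\ell)$. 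Consequently, for every $n\geq1$ the summand $n^{-\sum_{i=1}^{8}\|\pi_{i}\|_{k,\ell}}\,n^{\chi(\Sigma(\{\sigma_{f},\tau_{f},\pi_{i}\}))}$ is at most $n^{\max_{\{\sigma_{f},\sigma_{f},\pi_{i}\}\in\match^{*}(w,k,\ell)}\chi(\Sigma(\{\sigma_{f},\sigma_{f},\pi_{i}\}))}$.

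Summing over the boundedly many elements of $\match(w,k,\ell)$ and substituting back into Theorem~\ref{thm:genus-exp-bound} then produces the claimed estimate. I do not anticipate any genuine difficulty here: the deduction is essentially bookkeeping, the one point worth spelling out being the verification that the surgery output $\{\sigma_{f},\sigma_{f},\pi_{i}^{*}\}$ satisfies the defining constraints of $\match^{*}(w,k,\ell)$. All of the substantive content has already been established in Theorem~\ref{thm:genus-exp-bound} and in the preceding Proposition on the two surgeries.
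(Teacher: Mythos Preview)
Your proposal is correct and follows exactly the route the paper intends: the paper states that the corollary is an ``immediate corollary'' of Theorem~\ref{thm:genus-exp-bound} combined with the preceding Proposition, and you have simply written out that deduction in detail. The only point you add beyond the paper is the explicit verification that $\{\sigma_{f},\sigma_{f},\pi_{i}^{*}\}\in\match^{*}(w,k,\ell)$ and that this set is nonempty, both of which are straightforward.
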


The benefit to having $\pi_{i}\in S_{k}\times S_{\ell}$ for $i\in[8]$
is the following. Suppose now that $\{\sigma_{f},\sigma_{f},\pi_{i}\}\in\match^{*}(w,k,\ell)$.
Recall that the boundary loops of $\Sigma(\{\sigma_{f},\sigma_{f},\pi_{i}\})$
consist of one $w$-loop and some number of $R^{\pm}$-$\pi$-loops.
The condition that each $\pi_{i}\in S_{k}\times S_{\ell}$ means that
no $\pi$-interval ever connects an endpoint of a $R$-interval with
an endpoint of an $R^{-1}$-interval. So every boundary component
of $\Sigma(\{\sigma_{f},\sigma_{f},\pi_{i}\})$ that is not the $w$-loop
contains either only $R$-intervals or only $R^{-1}$-intervals, and
in fact, when following the boundary component and reading the directions
and labels of the intervals according to traversing each from $0$
to $1$, reads out a positive power of $R$ (in the former case of
only $R$-intervals) or a negative power of $R^{-1}$ (in the latter
case of only $R^{-1}$-intervals). The sum of the positive powers
of $R$ in boundary loops is $k$, and the sum of the negative powers
of $R$ is $-\ell$. Knowing this boundary structure is extremely
important for the arguments in the next sections.

\subsection{A topological result that proves Theorem \ref{thm:main-single-small-dim}\label{subsec:A-topological-result}}

Here, in the spirit of Culler \cite{CULLER}, we explain another way
to think about the surfaces \linebreak{}
$\Sigma(\{\sigma_{f},\sigma_{f},\pi_{i}\})$ for $\{\sigma_{f},\sigma_{f},\pi_{i}\}\in\match^{*}(w,k,\ell)$
that is easier to work with than the construction we gave. At this
point we also show how things work for general $g\geq2$. An \emph{arc
}in a surface $\Sigma$ is a properly embedded interval in $\Sigma$
with endpoints in the boundary $\partial\Sigma$.
\begin{defn}
\label{def:surfaces}For $w\in\F_{2g}$, we define $\surfaces(w,k,\ell)$
to be the set of all decorated surfaces $\Sigma^{*}$ as follows.
A decorated surface $\Sigma^{*}\in\surfaces(w,k,\ell)$ is an oriented
surface with boundary, with compatibly oriented boundary components,
together with a collection of disjoint embedded arcs that cut $\Sigma^{*}$
into topological discs. One boundary component is assigned to be a
$w$-loop, and every other boundary component is assigned to be either
a $R$-loop or an $R^{-1}$-loop. Each arc is assigned a transverse
direction and a label in $\{a_{1},b_{1},\ldots,a_{g},b_{g}\}$. Every
arc-endpoint in $\partial\Sigma^{*}$ inherits a transverse direction
and label from the assigned direction and label of its arc. We require
that $\Sigma^{*}$ satisfy the following properties. 
\begin{description}
\item [{P1}] When one follows the $w$-loop according to its assigned orientation,
and reads $f$ when an $f$-labeled arc-endpoint is traversed in its
given direction, and $f^{-1}$ when an $f$-labeled arc-endpoint is
traversed counter to its given direction, one reads a cyclic rotation
of $w$ in reduced form, depending on where one begins to read.
\item [{P2}] When one follows any $R$-loop according to its assigned orientation
in the same way as before, one reads (a cyclic rotation) of some positive
power of $R_{g}$ in reduced form. The sum of these positive powers
over all $R$-loops is $k$.
\item [{P3}] When one follows any $R^{-1}$-loop according to its assigned
orientation in the same way as before, one reads (a cyclic rotation)
of some negative power of $R_{g}$ in reduced form. The sum of these
negative powers over all $R^{-1}$-loops is $-\ell$.
\item [{P4}] No arc connects an $R$-loop to an $R^{-1}$-loop.
\end{description}
\end{defn}

Given a surface $\Sigma(\{\sigma_{f},\sigma_{f},\pi_{i}\})$ with
$\{\sigma_{f},\sigma_{f},\pi_{i}\}\in\match^{*}(w,k,\ell)$, all the
type-II discs of the surface are rectangles. Hence, by collapsing
each $w$-interval, $R$-interval, and $R^{-1}$-interval to a point,
and collapsing every type-II rectangle to an arc, we obtain a CW-complex
that is a surface with boundary, cut into discs by arcs. Every arc
inherits a transverse direction and label from the compatible assigned
directions and labels of the intervals in the boundary of its originating
type-II rectangle. We call this modified surface $\Sigma^{*}=\Sigma^{*}(\{\sigma_{f},\pi_{i}\})$.
It clearly satisfies \textbf{P1-P3 }and \textbf{P4} follows from the
\textbf{forbidden matchings }property. (Of course, when $g=2$, we
identify $\{a,b,c,d\}$ with $\{a_{1},b_{1},a_{2},b_{2}\}$.) We also
have $\chi(\Sigma(\{\sigma_{f},\sigma_{f},\pi_{i}\}))=\chi(\Sigma^{*}(\{\sigma_{f},\pi_{i}\}))$.
With Definition \ref{def:surfaces} and the remarks proceeding it,
we can now state a further consequence of Corollary \ref{cor:goodmatchings}
as it extends to general $g\geq2$. 
\begin{cor}
\label{cor:max-chi-bound}For $k+\ell>0$, $\mu\vdash k$, $\nu\vdash\ell$,
$w\in[\F_{2g},\F_{2g}]$, as $n\to\infty$
\begin{align*}
\J_{n}(w,\mu,\nu)\ll_{w,k,\ell} & \,n^{\max\{\,\chi(\Sigma^{*})\,:\,\Sigma^{*}\in\surfaces(w,k,\ell)\,\}}.
\end{align*}
\end{cor}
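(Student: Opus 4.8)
The plan is to deduce Corollary~\ref{cor:max-chi-bound} from Corollary~\ref{cor:goodmatchings} by bookkeeping only: no new estimate is required, merely the observation that the collapsing construction sketched after Corollary~\ref{cor:goodmatchings} sends $\match^{*}(w,k,\ell)$ into $\surfaces(w,k,\ell)$ without changing the Euler characteristic. As in the preceding sections I would carry this out for $g=2$, the case $g\geq 3$ being identical after replacing $\{a,b,c,d\}$ by $\{a_{1},b_{1},\ldots,a_{g},b_{g}\}$ and the constants $8(k+\ell)$ by $4g(k+\ell)$ throughout. First I would note that $\match^{*}(w,k,\ell)$ is nonempty: taking $\sigma_{f}=\tau_{f}=\id$ for every $f\in\{a,b,c,d\}$ and $\pi_{i}=\id$ for every $i\in[8]$ produces an element of $\match^{*}(w,k,\ell)$, since the identity permutation preserves each of the blocks of~(\ref{eq:bijs}) and hence never matches an element of $\II_{f,R}^{+}$ with one of $\II_{f,R^{-1}}^{-}$, nor an element of $\II_{f,R^{-1}}^{+}$ with one of $\II_{f,R}^{-}$, i.e. the \textbf{forbidden matchings} property holds. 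So the maximum appearing in Corollary~\ref{cor:goodmatchings} is over a nonempty finite set.

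Next I would spell out the assignment $\{\sigma_{f},\sigma_{f},\pi_{i}\}\mapsto\Sigma^{*}(\{\sigma_{f},\pi_{i}\})$. Given $\{\sigma_{f},\sigma_{f},\pi_{i}\}\in\match^{*}(w,k,\ell)$, the hypothesis $\sigma_{f}=\tau_{f}$ forces every type-II cycle of $\Sigma(\{\sigma_{f},\sigma_{f},\pi_{i}\})$ to consist of exactly two intervals together with one $\sigma_{f}$-arc and one $\tau_{f}$-arc, traversed alternately; hence every type-II disc is a rectangle. Collapsing each $w$-, $R$-, and $R^{-1}$-interval to a point and each type-II rectangle to an arc that carries the common transverse direction and label of the two intervals on its boundary yields a CW-complex $\Sigma^{*}(\{\sigma_{f},\pi_{i}\})$ which is an oriented surface with compatibly oriented boundary, cut into topological discs (the images of the type-I discs) by embedded arcs. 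I would then verify $\Sigma^{*}(\{\sigma_{f},\pi_{i}\})\in\surfaces(w,k,\ell)$: properties \textbf{P1}--\textbf{P3} restate exactly the boundary-reading facts established just before Corollary~\ref{cor:goodmatchings} — the $w$-loop reads a cyclic rotation of the reduced word $w$, and every other boundary loop reads a cyclic rotation of a power of $R_{g}$, with positive powers summing to $k$ and negative powers summing to $-\ell$ — while \textbf{P4} is precisely the consequence of the \textbf{forbidden matchings} property that no $\pi$-interval, hence no collapsed arc, joins an $R$-interval endpoint to an $R^{-1}$-interval endpoint. Since collapsing edges and $2$-cells in this manner does not alter the Euler characteristic, $\chi(\Sigma^{*}(\{\sigma_{f},\pi_{i}\}))=\chi(\Sigma(\{\sigma_{f},\sigma_{f},\pi_{i}\}))$. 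Finally, $\surfaces(w,k,\ell)$ is a finite set — there are $\tfrac12(|w|+4g(k+\ell))$ arcs, and the cyclic words read on the boundary loops together with the gluings and decorations of the arcs constitute finitely many choices — and each connected component of a $\Sigma^{*}\in\surfaces(w,k,\ell)$ meets the boundary, so $\chi\leq 1$ on each component and $\max\{\chi(\Sigma^{*}):\Sigma^{*}\in\surfaces(w,k,\ell)\}$ is a well-defined integer.

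Putting these together: for every $\{\sigma_{f},\sigma_{f},\pi_{i}\}\in\match^{*}(w,k,\ell)$ we have $\chi(\Sigma(\{\sigma_{f},\sigma_{f},\pi_{i}\}))=\chi(\Sigma^{*}(\{\sigma_{f},\pi_{i}\}))\leq\max\{\chi(\Sigma^{*}):\Sigma^{*}\in\surfaces(w,k,\ell)\}$, so taking the maximum over $\match^{*}(w,k,\ell)$ and substituting into Corollary~\ref{cor:goodmatchings} (and using $n^{a}\leq n^{b}$ for $n\geq 1$ when $a\leq b$) gives
\[
\J_{n}(w,[\mu,\nu])\ll_{w,k,\ell} n^{\max\{\,\chi(\Sigma^{*})\,:\,\Sigma^{*}\in\surfaces(w,k,\ell)\,\}},
\]
which is the assertion of Corollary~\ref{cor:max-chi-bound}. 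The only point that genuinely needs care — and it is essentially already discharged in the text preceding Corollary~\ref{cor:goodmatchings} — is checking that the collapsing map is well-defined: that after imposing $\sigma_{f}=\tau_{f}$ every type-II disc is indeed a rectangle, and that the transverse directions and labels descend consistently so that \textbf{P1}--\textbf{P4} all hold for $\Sigma^{*}(\{\sigma_{f},\pi_{i}\})$. There is no remaining obstacle, since the analytic input (the Weingarten bounds of Proposition~\ref{prop:wg-bound} and Lemma~\ref{lem:coeff-bound}) and the Euler-characteristic surgeries have already been absorbed into Corollary~\ref{cor:goodmatchings}.
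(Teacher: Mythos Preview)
Your proof is correct and follows exactly the route the paper takes: the paper does not give a separate proof of Corollary~\ref{cor:max-chi-bound} but declares it ``a further consequence of Corollary~\ref{cor:goodmatchings}'' via the collapsing construction $\Sigma(\{\sigma_{f},\sigma_{f},\pi_{i}\})\mapsto\Sigma^{*}(\{\sigma_{f},\pi_{i}\})$ described just before, and you have simply written out those details (plus the helpful observation that $\match^{*}(w,k,\ell)$ is nonempty).

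One small slip to fix: in your justification of \textbf{P4} you write that ``no $\pi$-interval, hence no collapsed arc, joins an $R$-interval endpoint to an $R^{-1}$-interval endpoint''. The collapsed arcs do not come from $\pi$-intervals; they come from the type-II rectangles, whose two non-arc sides are a $\sigma_{f}$-arc and a $\tau_{f}$-arc. So \textbf{P4} is ensured by the \textbf{forbidden matchings} property on $\sigma_{f}(=\tau_{f})$, which prevents a type-II rectangle from having one $R$-interval side and one $R^{-1}$-interval side. The condition $\pi_{i}\in S_{k}\times S_{\ell}$ is what guarantees the boundary loops are pure $R$-loops or pure $R^{-1}$-loops (properties \textbf{P2}--\textbf{P3}), which is a separate point. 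Your conclusion is unaffected.
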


In order for Corollary \ref{cor:max-chi-bound} to give us strong
enough results it needs to be combined with the following non-trivial
topological bound.
\begin{prop}
\label{prop:chi-bound}If $w\in[\F_{2g},\F_{2g}]$ is a shortest element
representing the conjugacy class of $\gamma\in\Gamma_{g}$, $w\neq\id$,
and $\Sigma^{*}\in\surfaces(w,k,\ell)$ then $\chi(\Sigma^{*})\leq-(k+\ell)$.
\end{prop}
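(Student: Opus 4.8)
The plan is to analyze a surface $\Sigma^* \in \surfaces(w,k,\ell)$ by decomposing it along its boundary structure, exploiting the fact that $w$ is a \emph{shortest} representative. Let $\Sigma^*$ have $b$ boundary components in total: one $w$-loop, some $R$-loops carrying positive powers $m_1,\dots,m_p$ of $R_g$ with $\sum m_i = k$, and some $R^{-1}$-loops carrying negative powers $-n_1,\dots,-n_q$ with $\sum n_j = \ell$ (by \textbf{P2},\textbf{P3}). The arcs cut $\Sigma^*$ into discs, so if $E$ is the number of arcs and $D$ the number of discs, $\chi(\Sigma^*) = V - E + D$ where, after the collapsing picture, $V$ is the number of interval-vertices on the boundary; the key identity to extract is that $\chi(\Sigma^*)$ equals (number of discs) minus (something growing with the total arc count), and each disc's boundary reads a reduced subword pattern. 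The first step I would carry out is to set up this Euler-characteristic bookkeeping cleanly: write $\chi(\Sigma^*) = \#\{\text{discs}\} - \#\{\text{arcs}\} + \frac12\#\{\text{arc-endpoints}\}$ using that the $1$-skeleton is built from arcs and boundary intervals, and note $\#\{\text{arc-endpoints on the }w\text{-loop}\} = |w|$ while each $R^{\pm1}$-loop of power $m$ contributes $m|R_g| = 4gm$ endpoints.

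Next, I would use \textbf{P4} to split the analysis: since no arc connects an $R$-loop to an $R^{-1}$-loop, one can (after cutting along separating curves, only increasing $\chi$) treat the $R$-side and $R^{-1}$-side independently, together with the single $w$-loop which may connect to either side or to itself. The heart of the matter is a \emph{Dehn/small-cancellation} input: because $w$ is a shortest element representing its conjugacy class, and $R_g$ is the surface relator satisfying a $C'(1/6)$-type (or Dehn-algorithm) condition, any disc in $\Sigma^*$ whose boundary is read off as an alternating word in arc-labels and pieces of the boundary loops must, when glued along the $w$-loop, not contain "too much" of $w$ — otherwise one could shorten $w$ using the relator, contradicting minimality. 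Concretely, I expect to argue that each disc incident to the $w$-loop "absorbs" boundary length in a way controlled by Dehn's algorithm, and each disc incident only to $R^{\pm1}$-loops is constrained because a positive (resp. negative) power of $R_g$ is already cyclically reduced and cannot be shortened. Summing the local disc inequalities and comparing against $\sum m_i + \sum n_j = k + \ell$ should yield $\chi(\Sigma^*) \le -(k+\ell)$.

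The main obstacle, which I would address in a dedicated lemma (presumably the content of the promised \S\S\ref{subsec:Proof-of-chi-bound}), is making precise the claim that the shortness of $w$ forbids the existence of discs that would push $\chi$ above $-(k+\ell)$. This is exactly where "Dehn's work on the word problem in $\Gamma_g$" enters: I would invoke that every nontrivial element of $[\F_{2g},\F_{2g}]$ that is shortest in its $\Gamma_g$-conjugacy class contains no subword which is more than half of a cyclic rotation of $R_g^{\pm1}$, and translate this into: no disc of $\Sigma^*$ can have its boundary run along more than half of $R_g$ worth of consecutive $w$-loop intervals without a cancellation. The delicate part is handling discs that touch both the $w$-loop \emph{and} an $R^{\pm1}$-loop, since there the "input word" is a concatenation of a piece of $w$ with a piece of $R_g^{\pm1}$; one must check that the shortest-word hypothesis still prohibits the bad configurations, essentially because replacing that piece of $w$ by (relator minus the $R_g$-piece) would give a shorter conjugate representative.

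Finally, I would assemble: with the local bounds in hand, a global count over all discs — using that the arcs cut $\Sigma^*$ into discs so $\chi$ is determined combinatorially — gives $\chi(\Sigma^*) \le D - E + V \le -(k+\ell)$, where the decisive inequality is that the number of "useful" discs is at most the number of arcs minus $(k+\ell)$ thanks to the $R^{\pm1}$ boundary powers summing to $k$ and $\ell$ together with \textbf{P4}. I expect the edge cases ($p=0$ or $q=0$, or a single $R$-loop of power $k$) to require separate but easy checks, and the genus-independence to be automatic once everything is phrased via $|R_g| = 4g$ and Dehn's algorithm, which works for all $g \ge 2$.
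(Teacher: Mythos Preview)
Your plan has the right ingredients in spirit---Euler-characteristic bookkeeping on the disc decomposition, the identity $2N_{RR}+N_{WR}=4g(k+\ell)$ coming from counting arc-endpoints on the $R^{\pm1}$-loops, and some form of Dehn's algorithm as the input that exploits shortness of $w$---but the mechanism you describe has a real gap at the crucial step.

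The issue is your proposed local input: ``$w$ contains no subword which is more than half of a cyclic rotation of $R_g^{\pm1}$'', applied disc by disc. This qualitative Dehn condition is too coarse to make the numerics close up to exactly $-(k+\ell)$. What the paper actually uses is a sharper \emph{quantitative} inequality due to Birman--Series: one introduces the loop $L_w$ in the annulus $\langle\gamma\rangle\backslash U$ and, for any \emph{piece} $P$ of the thickened loop $\hat L_w$ (a contiguous run of edge-sides and hanging half-edges on one side of $L_w$), one has
\[
\e(P)\ \le\ (2g-1)\,\he(P)\ +\ 2g\,\chi(P).
\]
The proof then does \emph{not} proceed disc by disc. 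Instead one classifies arcs as WW, WR, RR, calls a disc a \emph{pre-piece disc} if it meets exactly one $w$-loop segment, and defines a \emph{piece of $\Sigma^*$} to be a connected component of $\{\text{pre-piece discs}\}\cup\{\text{WR-arcs}\}$. The key lemma is that each piece of $\Sigma^*$ corresponds to a piece of $\hat L_w$ with no larger $\e$ and no smaller $\he$, so the Birman--Series inequality transfers. One then cuts along all RR-arcs, writes $\chi$ of the cut surface as a sum over discs of $1-\tfrac{d'(D)}{2}$, reorganizes the sum over piece-adjacent junction discs as a sum over pieces (each such disc contributes at most $-\tfrac14$ per WR-incidence), and feeds in the transferred inequality. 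The specific coefficients $(2g-1)$ and $2g$ are exactly what makes the final chain of inequalities collapse to $-\tfrac{1}{4g}(2N_{RR}+N_{WR})=-(k+\ell)$; a ``more than half'' bound would not.

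Two smaller points. Your suggestion to cut along separating curves to decouple the $R$-side from the $R^{-1}$-side is unnecessary and risky: compression raises $\chi$, so an upper bound on the compressed surface does bound $\chi(\Sigma^*)$, but the compressed pieces need not lie in $\surfaces(w,k',\ell')$ for any useful $k',\ell'$, and you may destroy the arc structure. The paper never separates the two sides; property \textbf{P4} is used only locally, to ensure every pre-piece disc touches $R$-loops only or $R^{-1}$-loops only, which is what allows the correspondence with a piece on a single side of $\hat L_w$. Finally, the place where $w\neq\id$ enters is not a boundary case but the fact that every disc meets at least one arc, which is needed to control the contribution of discs meeting only WW-arc-sides.
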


\begin{rem}
Proposition \ref{prop:chi-bound} is by no means a trivial statement
and one has to use that $w$ is a shortest element representing the
conjugacy class of some element of $\Gamma_{g}$. For example, if
$w=R_{g}$, then $w$ represents the conjugacy class of $\id_{\Gamma_{g}}$,
but for $k=0$ and $\ell=1$ there is an `obvious' annulus in $\surfaces(w,0,1)$.
This has $\chi=0>-(k+\ell)=-1$. Proposition \ref{prop:chi-bound}
also requires $w\neq\id$; if $w=\id$ then for $k=0$ and $\ell=1$
one can take a disc with no arcs as a valid element of $\surfaces(\id,0,0)$.
This has $\chi=1>-(k+\ell)=0$. In fact this disc is ultimately responsible
for $\E_{g,n}[\tr_{\id}]=n$.
\end{rem}

The proof of Proposition \ref{prop:chi-bound} is self-contained and
given in $\S\S$\ref{subsec:Proof-of-chi-bound}. Before doing this,
we prove Theorem \ref{thm:main-single-small-dim}.
\begin{proof}[Proof of Theorem \ref{thm:main-single-small-dim} given Proposition
\ref{prop:chi-bound}]
Since Theorem \ref{thm:main-single-small-dim} was proved when $k=\ell=0$
in $\S\S$\ref{subsec:Proof-of-k=00003Dl=00003D0}, we can assume
$k+\ell>0$. Then combining Corollary \ref{cor:max-chi-bound} and
Proposition \ref{prop:chi-bound} gives
\[
\J_{n}(w,\mu,\nu)\ll_{w,k,\ell}n^{-(k+\ell)}.
\]
On the other hand, $D_{\mu,\nu}(n)=O\left(n^{k+\ell}\right)$ from
(\ref{eq:dimension-asymp}). Therefore\linebreak{}
$D_{\mu,\nu}(n)\J_{n}(w,\mu,\nu)\ll_{w,k,\ell}1$.
\end{proof}

\subsection{Work of Dehn and Birman-Series\label{subsec:Work-of-Dehn}}

As we mentioned in $\S\S$\ref{subsec:Setup-and-motivation}, to prove
Proposition \ref{prop:chi-bound} we have to use the fact that $w\in[\F_{2g},\F_{2g}]$
is a shortest element representing the conjugacy class of $\gamma\in\Gamma_{g}$.
We use a combinatorial characterization of such words that stems from
Dehn's algorithm \cite{Dehn} for solving the problem of whether a
given word represents the identity in $\Gamma_{g}$. The ideas of
Dehn's algorithm were refined by Birman and Series in \cite{BirmanSeries}.
In \cite{MPasympcover}, the author and Puder used Birman and Series'
results (alongside other methods) to obtain the analog of Theorem
\ref{thm:main-convergence} when the family of groups $\SU(n)$ is
replaced by the family of symmetric groups $S_{n}$. Similar consequences
of the work of Dehn, Birman, and Series that we used in \emph{(ibid.)}
will be used here.

We now follow the language of \cite{MPasympcover} to state the results
we need in this paper. These results are simple and direct consequences
of the work of Birman and Series.

We view the universal cover of $\Sigma_{g}$ as a disc tiled by $4g$-gons
that we call $U$. We assume every edge of this tiling is directed
and labeled by some element of $\{a_{1},b_{1},\ldots,a_{g},b_{g}\}$
such that when we read counter-clockwise along the boundary of any
octagon we read the reduced cyclic word $[a_{1},b_{1}]\cdots[a_{g},b_{g}]$.
By fixing a basepoint $u\in U$ we obtain a free cellular action of
$\Gamma_{g}$ on $U$ that respects the labels and directions of edges
and identifies the quotient $\Gamma_{g}\backslash U$ with $\Sigma_{g}$;
this gives a description of $\Sigma_{g}$ as a $4g$-gon with glued
sides as is typical.

Now suppose that $\gamma\in\Gamma$ is not the identity. The quotient
$A_{\gamma}\eqdf\langle\gamma\rangle\backslash U$ of $U$ by the
cyclic group generated by $\gamma$ is an open annulus tiled by infinitely
many $4g$-gons. The edges of $A_{\gamma}$ inherit directions and
labels from those of the edges of $U$. The point $u\in U$ maps to
some point denoted by $x_{0}\in A_{\gamma}$. 

Now let $w\in\F_{2g}$ be an element that represents $\gamma$, and
identify $w$ with a combinatorial word by writing $w$ in reduced
form. Beginning at $x_{0}$, and following the path spelled out by
$w$ beginning at $x_{0}$, we obtain an oriented closed loop $L_{w}$
in the one-skeleton of $A_{\gamma}$. If $w$ is a shortest element
representing the conjugacy class of $\gamma$, then this loop $L_{w}$
must not have self-intersections. In this case, that we from now assume,
$L_{w}$ is therefore a topologically embedded circle in the annulus
$A_{\gamma}$ that is non-nullhomotopic and cuts $A_{\gamma}$ into
two annuli $A_{\gamma}^{\pm}$.

Every vertex of $A_{\gamma}$ has $4g$ incident half-edges each of
which has an orientation and direction given by the edge they are
in. Going clockwise, the cyclic order of the half-edges incident at
any vertex is 

`$a_{1}$-outgoing, $b_{1}$-incoming, \textbf{$a_{1}$}-incoming,
$b_{1}$-outgoing, ... , $a_{g}$-outgoing, $b_{g}$-incoming, $a_{g}$-incoming,
$b_{g}$-outgoing'.

We define $\hat{L}_{w}$ to be the loop $L_{w}$ with all incident
half edges in $A_{\gamma}$ attached. We call the new half-edges added
\emph{hanging half-edges.}

Moreover, we thicken up $\hat{L}_{w}$ by viewing each edge of $L_{w}$
as a rectangle, each hanging half-edge as a half-rectangle, and each
vertex replaced by a disc. In other words, we take a small neighborhood
of $\hat{L}_{w}$ in $A_{\gamma}$. We now think of $\hat{L}_{w}$
as the thickened version. This is a topological annulus, where the
hanging half-edges have become stubs hanging off. A \emph{piece of
$\hat{L}_{w}$ }is a contiguous collection of hanging half-rectangles
and rectangle sides following edges of $L_{w}$ in the boundary of
$\hat{L}_{w}$. Such a piece is in either $A_{\gamma}^{+}$ or $A_{\gamma}^{-}$.
Given a piece $P$ of $\hat{L}_{w}$ we write $\e(P)$ for the number
of rectangle sides following edges of $L_{w}$, and $\he(P)$ for
the number of hanging-half edges in $P$. We say that a piece $P$
has Euler characteristic $\chi(P)=0$ if it follows an entire boundary
component of $\hat{L}_{w}$, and $\chi(P)=1$ otherwise as we view
it as an interval running along the rectangle sides and around the
sides of the hanging half-rectangles. See Figure \ref{fig:Illustration-of-a-piece}
for an illustration of a piece of $\hat{L}_{w}$.

\begin{figure}

\begin{centering}
\includegraphics[scale=1]{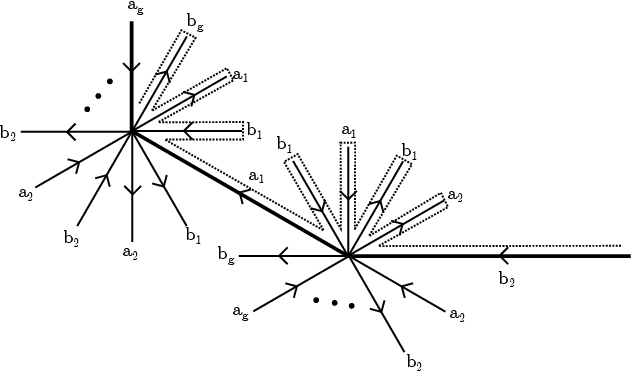}\caption{\label{fig:Illustration-of-a-piece}Illustration of a piece $P$ of
$\hat{L}_{w}$ in the case when the reduced form of $w$ contains
$a_{g}a_{1}^{-1}b_{2}^{-1}$ as a subword. The edges of $L_{w}$ are
in bold. The piece is indicated by the dotted lines. This piece $P$
has $\protect\e(P)=2$, $\protect\he(P)=7$, and $\chi(P)=1$. Note
that a piece may also run along the other side of $L_{w}$.}
\par\end{centering}
\end{figure}

Birman and Series prove in \cite[Thm. 2.12(a)]{BirmanSeries} that
if $w$ is a shortest element representing the conjugacy class of
$\gamma\in\Gamma_{g}$ then there are strong restrictions on the pieces
of $\hat{L}_{w}$ that can appear. This has the following consequence
which is given by\footnote{We stress that Lemma \ref{lem:inequality-for-pieces} is a straightforward
consequence of Birman and Series' work, so even though we cite \cite{MPasympcover},
this paper does not depend on \cite{MPasympcover} in any significant
way.} \cite[Proof of Lem. 5.18]{MPasympcover}.
\begin{lem}
\label{lem:inequality-for-pieces}If $w$ is a shortest element representing
the conjugacy class of $\gamma\in\Gamma_{g}$, and both $\gamma$
and hence $w$ are non-identity, then for any piece $P$ of $\hat{L}_{w}$,
we have 
\[
\e(P)\leq(2g-1)\he(P)+2g\chi(P).
\]
\end{lem}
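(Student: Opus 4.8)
The plan is to reduce the inequality to the Birman–Series structure theorem \cite[Thm. 2.12(a)]{BirmanSeries}, which classifies the pieces that can occur along $\hat{L}_{w}$ when $w$ is a shortest representative. First I would recall the precise setup: a piece $P$ runs along one side of $L_{w}$, following $\e(P)$ consecutive edges of $L_{w}$ and, at each vertex of $L_{w}$ that it passes through, it picks up some hanging half-edges. The content of the Birman–Series theorem is that each edge of $L_{w}$, together with the way $L_{w}$ turns at its endpoints, is constrained so that $L_{w}$ cannot take a ``too short'' turn on either side without violating minimality; quantitatively, at each vertex traversed by $P$ the number of hanging half-edges on that side is bounded below, and symmetrically an edge of $L_{w}$ that is followed by $P$ forces a controlled number of hanging half-edges at its two endpoints. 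The inequality $\e(P)\le(2g-1)\he(P)+2g\chi(P)$ is then obtained by a counting/averaging argument over the vertices and edges comprising $P$.

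Concretely, I would set up the bookkeeping as follows. Label the vertices of $L_{w}$ visited by $P$ in order $v_{0},v_{1},\ldots,v_{m}$, so $P$ traverses the edges $v_{0}v_{1},\ldots,v_{m-1}v_{m}$ and hence $\e(P)=m$ (in the cyclic case $\chi(P)=0$ one has a closed cycle $v_0=v_m$ with $m$ edges; in the interval case $\chi(P)=1$ the endpoints are the two ends of $P$). Each internal vertex $v_{j}$ ($0<j<m$) contributes some number $h_{j}\ge 0$ of hanging half-edges lying on the side of $P$, determined by the angular gap between the incoming and outgoing edges of $L_{w}$ at $v_{j}$ in the cyclic order `$a_{1}$-out, $b_{1}$-in, $a_{1}$-in, $b_{1}$-out, $\ldots$' at that vertex; the two terminal vertices in the $\chi(P)=1$ case contribute partial gaps. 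Birman and Series' restriction on the subpaths of $L_{w}$ that can appear (their classification of admissible ``turns'') says precisely that whenever $P$ follows $r$ consecutive edges of $L_{w}$ between two consecutive hanging-half-edge occurrences, one has an inequality of the shape $r \le (2g-1)(\text{half-edges picked up at the bracketing vertices}) + O(1)$; summing this over the full extent of $P$ telescopes to $\e(P)\le (2g-1)\he(P) + 2g\chi(P)$, with the additive $2g\chi(P)$ absorbing the endpoint corrections (which are present only when $P$ does not close up). This is essentially the computation carried out in \cite[Proof of Lem. 4.20]{MPasympcover}, and I would follow that proof, merely re-deriving it in the present notation.

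The main obstacle, and the only substantive point, is correctly extracting the per-turn inequality from \cite[Thm. 2.12(a)]{BirmanSeries}: Birman–Series phrase their result in terms of forbidden subwords / admissible sequences of ``sides'' of the $4g$-gon tiling, and one must translate ``$L_{w}$ turns through a gap of size $g$ half-edges at a vertex'' into ``the subword of $w$ read across that vertex is admissible for a geodesic representative.'' The key quantitative input is that a geodesic (shortest) word cannot contain a subpath that cuts off fewer than $2g-1$ edges of a $4g$-gon on one side, which is exactly what bounds $\e(P)$ against $(2g-1)\he(P)$; the $2g\chi(P)$ term is a boundary artifact of whether the piece is an arc or a circle. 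Once that dictionary is in place the inequality is a direct summation, so I would keep this proof short: cite \cite[Thm. 2.12(a)]{BirmanSeries} and \cite[Proof of Lem. 4.20]{MPasympcover} for the detailed turn-by-turn analysis and present only the telescoping count here.
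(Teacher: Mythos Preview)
Your proposal is correct and takes essentially the same approach as the paper: both reduce the inequality directly to \cite[Proof of Lem.~4.20]{MPasympcover}, which in turn rests on the Birman--Series structure theorem \cite[Thm.~2.12(a)]{BirmanSeries}. The paper's proof is in fact even terser than yours --- it simply observes that $L_{w}$ is a boundary reduced tiled surface in the language of \cite{MPasympcover} and cites that lemma's proof, without unpacking the telescoping count you sketch.
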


\begin{proof}
Since $w$ is a shortest element representing some non-identity conjugacy
class in $\Gamma_{g}$, in the language of \cite{MPasympcover}, $L_{w}$
is a boundary reduced tiled surface. Then the proof of \cite[Lem. 5.18]{MPasympcover}
contains the result stated in the lemma. The basic idea of the proof
is not complicated and goes back to Dehn \cite{Dehn}: if there are
too many edges (i.e. $\e(P)$ is large) then one can find a string
of letters in the reduced word of $w$ (e.g. $aba^{-1}b^{-1}c$) that
can be shortened using the relator $R$ (e.g. $aba^{-1}b^{-1}c=dcd^{-1}$).
\end{proof}
This inequality plays a crucial role in the next section. 

\subsection{Proof of Proposition \ref{prop:chi-bound}\label{subsec:Proof-of-chi-bound}}

Suppose that $g\geq2$ and $w\in[\F_{2g},\F_{2g}]$ is a non-identity
shortest element representing the conjugacy class of $\gamma\in\Gamma_{g}$.
In particular, $w$ is cyclically reduced. We let $R=R_{g}.$ Now
fix $k,\ell\in\N_{0}$ and suppose $\Sigma^{*}\in\surfaces(w,k,\ell)$.
The arcs of $\Sigma^{*}$ are of three different types:
\begin{description}
\item [{WR}] An arc with one endpoint in the $w$-loop and one endpoint
in an $R$ or $R^{-1}$-loop. 
\item [{RR}] An arc with both endpoints in $R$ or $R^{-1}$ loops. By
property \textbf{P4,}\textbf{\emph{ }}the endpoints of such an arc
are both in $R$-loops or both in $R^{-1}$-loops.
\item [{WW}] An arc with both endpoints in the $w$-loop.
\end{description}
The boundary of any disc of $\Sigma^{*}$ alternates between segments
of $\partial\Sigma^{*}$ and arcs. A disc is a \emph{pre-piece disc}
if its boundary contains exactly one segment of the $w$-loop. A disc
is called a \emph{junction disc} if it is not a pre-piece disc. We
say that a junction disc is \emph{piece-adjacent }if it meets a WR-arc-side.

To be precise, we view all discs as open discs, and hence not containing
any arcs. A disc meets certain arc-sides along its boundary; it is
possible for a disc to meet both sides of the same arc and we view
this scenario as the disc meeting two separate arc-sides. We say an
arc-side has the same type WR/RR/WW as its corresponding arc.

Note that any pre-piece disc cannot meet any WW-arc-side: if it did,
the disc could only meet this one arc-side together with one segment
of the $w$-loop and this would contradict the fact that $w$ is cyclically
reduced since the arc matches a letter $f$ with a cyclically adjacent
letter $f^{-1}$ of $w$. It is also clear that any pre-piece disc
meets exactly 2 WR-arc-sides: the ones that emanate from the sole
segment of the $w$-loop. So in light of \textbf{P4} a pre-piece disc
takes one of the forms shown in Figure \ref{fig:pre-piece-disc}.

\begin{figure}
\begin{centering}
\includegraphics{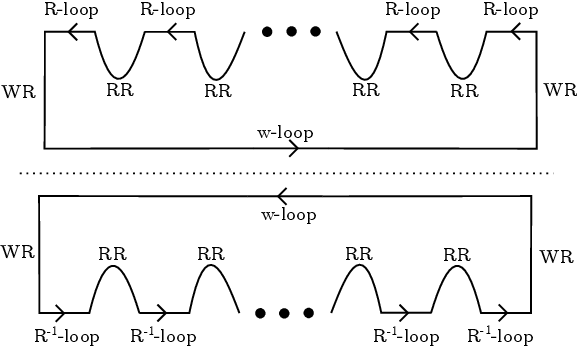}
\par\end{centering}
\caption{\label{fig:pre-piece-disc}Possible forms of pre-piece discs. The
number of $R$-loop segments or $R^{-1}$-loop segments is at least
$1$ and bounded given $k$ and $\ell$. The arrows denote the orientations
of the boundary loops.}

\end{figure}

We define a \emph{piece of} $\Sigma^{*}$ to be a connected component
of 
\[
\{\text{pre-piece discs\}}\cup\{\text{WR-arcs\}}.
\]
A piece of $\Sigma^{*}$ is therefore either a contiguous collection
of pre-piece discs that meet only along WR-arcs, or a single WR-arc.
If $P$ is a piece of $\Sigma^{*}$, either $\chi(P)=1$, or $\chi(P)=0$,
in which case $P$ meets the entire $w$-loop and is the unique piece.

We now have \textbf{two} definitions of pieces; pieces of $\hat{L}_{w}$
and pieces of $\Sigma^{*}$. These are, as the names suggest, closely
related, and this is the key observation in the proof of Proposition
\ref{prop:chi-bound}. Indeed, the reader should carefully consider
Figure \ref{fig:pp-hhe} that leads to the following lemma. In analogy
to pieces of $\hat{L}_{w}$, if $P$ is any piece of $\Sigma^{*}$,
we write $\e(P)$ for the number of WR-arcs in $P$, and $\he(P)$
for the number of RR-arc sides that meet $P$ (this is zero if $P$
is a single WR-arc).

\begin{figure}
\begin{centering}
\includegraphics[scale=0.9]{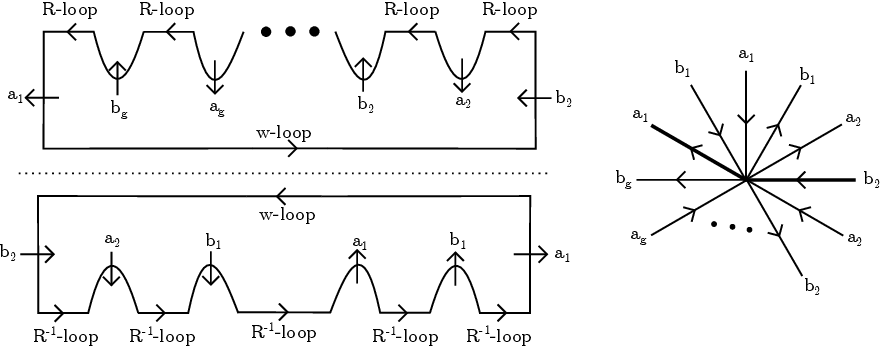}\caption{\label{fig:pp-hhe}Given a segment of the $w$-loop corresponding
to a juncture between letters $a_{1}^{-1}b_{2}^{-1}$ in $w$, if
this segment is part of a pre-piece disc then some possible forms
of that disc are shown above. This juncture between letters of $w$
corresponds to a vertex in $L_{w}$. The right hand illustration shows
the neighborhood of this vertex in the annulus $A_{\gamma}$, where
the bold lines correspond to half-edges of $L_{w}$. The right hand
picture actually almost determines the left hand pictures. Indeed,
given the $a_{1}$ arc on the top-left, the next arc has to be a $b_{g}$
arc with the given direction, since \uline{only} $b_{g}^{-1}$
cyclically precedes $a_{1}$ in $R_{g}$ or any power of $R_{g}$.
Then the next arc $a_{g}$ with its direction is determined since
\uline{only} $a_{g}$ cyclically precedes $b_{g}$ in $R_{g}$.
This continues until an arc labeled by $b_{2}$ and with an incoming
direction is reached, as in the right arc of the top-left picture.
At this point, the boundary of the disc may close up. (This is analogous
to what happens in the bottom picture, where an analogous pattern
occurs.) The only indeterminacy is that after reaching a $b_{2}$
arc with an incoming direction for the first time, the \uline{entire
pattern} shown in the right hand picture may repeat any number of
times, as long as $k$ and $\ell$ allow it. The upshot of this is
that any pre-piece disc has at least as many incident RR-arc-sides
as there are hanging half-edges on the corresponding side of $L_{w}$,
at the corresponding vertex.}
\par\end{centering}
\end{figure}

\begin{lem}
\label{lem:sigma-piece-inequality}If $w$ is a shortest element representing
the conjugacy class of $\gamma\in\Gamma_{g}$, $k,\ell\in\N_{0}$,
and $\Sigma^{*}\in\surfaces(w,k,\ell)$ then for any piece $P$ of
$\Sigma^{*}$, we have 
\[
\e(P)\leq(2g-1)\he(P)+2g\chi(P).
\]
\end{lem}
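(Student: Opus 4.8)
The plan is to associate to each piece $P$ of $\Sigma^{*}$ a \emph{piece of} $\hat{L}_{w}$ in the sense of $\S\S$\ref{subsec:Work-of-Dehn}, in such a way that the quantity $\e$ is preserved, $\chi$ is preserved, and $\he$ can only increase, and then to invoke Lemma \ref{lem:inequality-for-pieces}. We may assume $w\neq\id$, since otherwise by \textbf{P1} the $w$-loop carries no arc-endpoints, hence no incident WR-arc-sides, hence there are no pre-piece discs and no pieces. The underlying dictionary is this: reading the $w$-loop and using \textbf{P1}, the arc-endpoints along it correspond cyclically to the letters of the reduced word $w$, hence to the edges of $L_{w}$; consequently the segments of the $w$-loop, one between each pair of cyclically consecutive letters, correspond cyclically to the vertices of $L_{w}$.

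Fix a pre-piece disc $D$. It meets a unique $w$-loop segment, which singles out a vertex $v(D)$ of $L_{w}$, and it meets exactly the two WR-arc-sides emanating from that segment. From here the entire boundary of $D$ is forced, as in the discussion accompanying Figure \ref{fig:pp-hhe}: reading $\partial D$ must spell a cyclic rotation of some power $R_{g}^{\pm m}$ with $m\geq1$ (by \textbf{P2}, \textbf{P3}, \textbf{P4}), and in $R_{g}$ every letter has a unique cyclic predecessor and successor, so each arc-side of $\partial D$ after the first is determined, until a WR-arc-side closing up $D$ is reached, possibly after the pattern repeats $m$ times. Thus $D$ records turning $m$ times around $v(D)$ on one definite side $s(D)\in\{+,-\}$ of $L_{w}$, and the number of RR-arc-sides met by $D$ equals $m$ times the number of hanging half-edges of $\hat{L}_{w}$ at the corner $(v(D),s(D))$, in particular it is at least that number.

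Now assemble these local pictures. The two sides of the WR-arc at letter $j$ of $w$ face the discs on the two $w$-loop segments adjacent to the endpoint for letter $j$, and these are consecutive segments of the $w$-loop, hence consecutive vertices of $L_{w}$. Since each pre-piece disc meets exactly two WR-arc-sides, it follows that a piece $P$ of $\Sigma^{*}$ is either a single WR-arc or a chain $e_{0},D_{1},e_{1},\ldots,D_{m},e_{m}$ of pre-piece discs joined along WR-arcs, with $v(D_{1}),\ldots,v(D_{m})$ consecutive vertices of $L_{w}$, which closes into a cycle precisely when $P$ meets the entire $w$-loop; moreover the sidedness $s(D_{i})$ is then forced to be constant along $P$. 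These data glue to a genuine piece $\Phi(P)$ of $\hat{L}_{w}$ on the side $s=s(D_{1})$, made of the rectangle-sides following $e_{0},\ldots,e_{m}$ and the corners at $v(D_{1}),\ldots,v(D_{m})$. By construction $\e(P)=\e(\Phi(P))$, both counting the WR-arcs, i.e. rectangle-sides, in the piece; $\chi(P)=\chi(\Phi(P))$, both equal to $0$ exactly when the piece runs once around, since $P$ meeting the whole $w$-loop corresponds to $\Phi(P)$ closing up the side-$s$ boundary circle of $\hat{L}_{w}$; and $\he(P)\geq\he(\Phi(P))$ by the previous paragraph. If $P$ is a single WR-arc then $\e(P)=1$, $\he(P)=0$, $\chi(P)=1$, and the asserted inequality reads $1\leq2g$, which holds.

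Finally, Lemma \ref{lem:inequality-for-pieces} applied to $\Phi(P)$ gives $\e(\Phi(P))\leq(2g-1)\he(\Phi(P))+2g\,\chi(\Phi(P))$; since $2g-1>0$ we have $(2g-1)\he(\Phi(P))\leq(2g-1)\he(P)$, and together with $\e(P)=\e(\Phi(P))$ and $\chi(P)=\chi(\Phi(P))$ this yields $\e(P)\leq(2g-1)\he(P)+2g\,\chi(P)$. I expect the main difficulty to lie in the second and third paragraphs: proving rigorously that a pre-piece disc is combinatorially forced to be an $m$-fold circuit around a vertex of $L_{w}$ on a well-defined side, and that these sides are compatible along a piece so that $\Phi$ is well-defined; once this structural statement is established, tracking $\e$, $\he$, $\chi$ and applying Lemma \ref{lem:inequality-for-pieces} is routine.
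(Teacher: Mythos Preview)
Your proposal is correct and follows essentially the same approach as the paper: associate to each piece $P$ of $\Sigma^{*}$ a piece $P'$ (your $\Phi(P)$) of $\hat{L}_{w}$ with $\e(P')=\e(P)$, $\chi(P')=\chi(P)$, and $\he(P')\leq\he(P)$, then invoke Lemma \ref{lem:inequality-for-pieces}. Your version is slightly more explicit in a few places (the vacuous case $w=\id$, the single WR-arc case, and the combinatorial forcing of the pre-piece disc boundary), while the paper simply declares the side of $L_{w}$ according to whether $P$ meets $R$-loops (left) or $R^{-1}$-loops (right); these determinations agree.
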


\begin{proof}
Given any piece $P$ of $\Sigma^{*}$, it contains a consecutive (possibly
cyclic) series of WR-arcs that correspond to a contiguous collection
of edges in the loop $L_{w}$. The discs of $P$ correspond to certain
vertices of $L_{w}$; each of these vertices has two emanating half-edges
belonging to the edges defined by WR-arcs of $P$. The piece $P$
can either meet only $R$-loops or meet only $R^{-1}$-loops. 

We define a piece $P'$ of $\hat{L}_{w}$ corresponding to $P$ as
follows. If $P$ meets $R$-loops, then $P'$ consists of rectangle
sides along the edges of $L_{w}$ corresponding to the WR-arcs of
$P$ together with all hanging half-edges at vertices corresponding
to discs of $P$ that are on the \uline{left} of $L_{w}$ as it
is traversed in its assigned orientation (corresponding to reading
$w$ along $L_{w}$). If $P'$ meets $R^{-1}$-loops, then $P'$ is
defined similarly with the modification that we include instead hanging
half-edges on the \uline{right} of $L_{w}$. Figure \ref{fig:pp-hhe}
together with its captioned discussion now shows that 
\[
\he(P')\leq\he(P),
\]
and $\e(P)=\e(P')$ by construction. We also have $\chi(P')=\chi(P)$.
Therefore Lemma \ref{lem:inequality-for-pieces} applied to $P'$
implies
\[
\e(P)=\e(P')\leq(2g-1)\he(P')+2g\chi(P')\leq(2g-1)\he(P)+2g\chi(P).
\]
\end{proof}
Let $N_{RR}$ be the number of RR-arcs, $N_{WR}$ the number of WR-arcs,
and $N_{WW}$ the number of WW-arcs in $\Sigma^{*}$. In the following
we refer to discs of $\Sigma^{*}$ simply as discs. Since there are
$4g(k+\ell)$ incidences between arcs and $R$-loops or $R^{-1}$
loops we have 
\begin{align}
2N_{RR}+N_{WR} & =4g(k+\ell).\label{eq:arcs-to-R-length}
\end{align}
Let $\Sigma_{1}$ be the surface formed by cutting $\Sigma^{*}$ along
all RR-arcs. We have
\[
\chi(\Sigma_{1})=\sum_{\text{discs}\,D}\left(1-\frac{d'(D)}{2}\right)
\]
where $d'(D)$ is the number of arc-sides meeting $D$ that are \uline{not}
of type RR. This formula holds because $d'(D)$ is the degree of the
disc $D$ in the dual graph $G_{1}$ of $\Sigma_{1}$, the right hand
side is easily seen to be $\chi(G_{1})=V(G_{1})-E(G_{1})$, and since
$\Sigma_{1}$ deformation retracts to an obvious embedded copy of
$G_{1}$, $\chi(G_{1})=\chi(\Sigma_{1})$. We partition the sum above
according to
\begin{align*}
\chi(\Sigma_{1})= & S_{0}+S_{1}+S_{2},\\
S_{0}\eqdf & \sum_{\text{pre-piece discs \ensuremath{D}}}\left(1-\frac{d'(D)}{2}\right),\\
S_{1}\eqdf & \sum_{\text{piece-adjacent junction discs}\,D}\left(1-\frac{d'(D)}{2}\right),\\
S_{2}\eqdf & \sum_{\text{not piece-adjacent junction discs}\,D}\left(1-\frac{d'(D)}{2}\right).
\end{align*}
Note first that a pre-piece disc has $d'(D)=2$ (cf. Fig \ref{fig:pre-piece-disc}).
Hence $S_{0}=0$. We deal with $S_{1}$ next. For a disc $D$ of $\Sigma^{*}$,
let $d_{WR}(D)$ denote the number of WR-arc-sides meeting $D$. Note
that a piece-adjacent junction disc $D$ has $d_{WR}(D)>0$ by definition.
We rewrite $S_{1}$ as

\begin{align}
S_{1}= & \sum_{\text{piece-adjacent junction discs}\,D}\left(1-\frac{d'(D)}{2}\right)\frac{1}{d_{WR}(D)}\nonumber \\
 & \sum_{\text{incidences between \ensuremath{D} and WR-arc-sides}}1\nonumber \\
= & \sum_{\text{pieces \ensuremath{P}}}\sum_{\text{incidences between \ensuremath{P} and some junction disc \ensuremath{D} along WR-arc}}Q(D)\label{eq:sum-to-sum-ofquotients}
\end{align}
where for a piece-adjacent junction disc $D$
\[
Q(D)\eqdf\frac{1}{d_{WR}(D)}\left(1-\frac{d'(D)}{2}\right).
\]

Suppose that $D$ is a piece-adjacent junction disc. By parity considerations,
$d_{WR}(D)$ is even. We estimate $Q(D)$ by splitting into two cases.
If $d_{WR}(D)=2$ then $d'(D)\geq3$ since otherwise, $D$ would meet
only 2 WR arc-sides and other RR arc-sides, hence be a pre-piece disc
and not be a junction disc. In this case
\[
Q(D)=\frac{1}{2}\left(1-\frac{d'(D)}{2}\right)\leq\frac{1}{2}\left(1-\frac{3}{2}\right)=-\frac{1}{4}.
\]
Otherwise, $d_{WR}(D)\geq4$ and since $d'(D)\geq d_{WR}(D)$, we
have 
\[
Q(D)\leq\frac{1}{d_{WR}(D)}\left(1-\frac{d_{WR}(D)}{2}\right)=\frac{1}{d_{WR}(D)}-\frac{1}{2}\leq\frac{1}{4}-\frac{1}{2}=-\frac{1}{4}.
\]
So we have proved that for all piece-adjacent junction discs $D$,
$Q(D)\leq-\frac{1}{4}$. Putting this into (\ref{eq:sum-to-sum-ofquotients})
gives
\begin{align}
S_{1} & \leq-\frac{1}{4}\sum_{\text{pieces \ensuremath{P}}}\sum_{\text{incidences between \ensuremath{P} and some junction disc \ensuremath{D} along WR-arc}}1\nonumber \\
 & =-\frac{1}{4}\sum_{\text{pieces \ensuremath{P}}}2\chi(P)=-\frac{1}{2}\sum_{\text{pieces \ensuremath{P}}}\chi(P).\label{eq:S1bound}
\end{align}

We now turn to $S_{2}$. \emph{Here is the key moment where $w\neq\id$
is used}\footnote{Although technically, $w\neq\id$ was used to define $L_{w}$ and
pieces etc, if $w$ is the identity the proof of Proposition \ref{prop:chi-bound}
could, a priori, circumvent these definitions.}. Since $w\neq\id$, any disc must meet an arc. Indeed, the only other
possibility is that the boundary of the disc is an entire boundary
loop that has no emanating arcs. This hypothetical boundary loop cannot
be an $R$ or $R^{-1}$-loop, so it has to be the $w$-loop. But this
would entail $w=\id$.

Hence any disc contributing to $S_{2}$ meets no WR-arc-side, but
meets some arc-side. Therefore it meets only WW-arcs or only RR-arcs.
Every disc $D$ contributing to $S_{2}$ meeting only WW-arcs gives
a non-positive contribution since $w$ is cyclically reduced hence
$d'(D)\geq2$. Every disc $D$ contributing to $S_{2}$ meeting only
RR-arcs, which we will call an \emph{RR-disc}, has $d'(D)=0$ and
hence contributes $1$ to $S_{2}$. 

This shows
\begin{equation}
S_{2}\leq\#\{\text{RR-discs}\}.\label{eq:S2-bound}
\end{equation}
In total combining $S_{0}=0$ with (\ref{eq:S1bound}) and (\ref{eq:S2-bound})
we get
\[
\chi(\Sigma_{1})\leq\#\{\text{RR-discs}\}-\frac{1}{2}\sum_{\text{pieces \ensuremath{P} of \ensuremath{\Sigma^{*}}}}\chi(P).
\]
To obtain $\Sigma^{*}$ from $\Sigma_{1}$ we have to glue all cut
RR-arcs, of which there are $N_{RR}$. Each gluing decreases $\chi$
by 1 so
\[
\chi(\Sigma^{*})\leq\#\{\text{RR-discs}\}-N_{RR}-\frac{1}{2}\sum_{\text{pieces \ensuremath{P} of \ensuremath{\Sigma^{*}}}}\chi(P).
\]
Using Lemma \ref{lem:sigma-piece-inequality} with the above gives
\begin{align}
\chi(\Sigma^{*})\leq & \#\{\text{RR-discs}\}-N_{RR}-\frac{1}{2}\sum_{\text{pieces \ensuremath{P} of \ensuremath{\Sigma^{*}}}}\chi(P)\nonumber \\
\leq & \#\{\text{RR-discs}\}-N_{RR}-\frac{1}{4g}\sum_{\text{pieces \ensuremath{P} of \ensuremath{\Sigma^{*}}}}\e(P)\nonumber \\
 & +\frac{(2g-1)}{4g}\sum_{\text{pieces \ensuremath{P} of \ensuremath{\Sigma^{*}}}}\he(P)\\
= & \#\{\text{RR-discs}\}-N_{RR}-\frac{N_{WR}}{4g}+\frac{(2g-1)}{4g}\sum_{\text{pieces \ensuremath{P} of \ensuremath{\Sigma^{*}}}}\he(P).\label{eq:chisigmastar1}
\end{align}

Let $\he'(\Sigma^{*})$ denote the total number of RR-arc-sides meeting
RR-discs. Every RR-disc has to meet at least $4g$ arc-sides; this
observation is similar to the reasoning in Figure \ref{fig:pp-hhe}.
Therefore
\begin{equation}
\he'(\Sigma^{*})\geq4g\#\{\text{RR-discs}\}.\label{eq:hedash1}
\end{equation}
 Every RR-arc-side either meets a piece $P$ and contributes to $\he(P)$
or a disc meeting only RR-arc-sides and contributes to $\he'(\Sigma^{*})$.
Hence
\begin{equation}
\he'(\Sigma^{*})+\sum_{\text{pieces \ensuremath{P} of \ensuremath{\Sigma^{*}}}}\he(P)=2N_{RR}.\label{eq:hedash2}
\end{equation}
Combining (\ref{eq:arcs-to-R-length}), (\ref{eq:hedash1}), and (\ref{eq:hedash2})
with (\ref{eq:chisigmastar1}) gives
\begin{align*}
\chi(\Sigma^{*}) & \stackrel{\eqref{eq:hedash1}}{\leq}\frac{\he'(\Sigma^{*})}{4g}-N_{RR}-\frac{N_{WR}}{4g}+\frac{(2g-1)}{4g}\sum_{\text{pieces \ensuremath{P} of \ensuremath{\Sigma^{*}}}}\he(P)\\
 & \stackrel{\eqref{eq:hedash2}}{=}\frac{\he'(\Sigma^{*})}{4g}-N_{RR}-\frac{N_{WR}}{4g}+\frac{(2g-1)N_{RR}}{2g}-\frac{(2g-1)}{4g}\he'(\Sigma^{*})\\
 & =-\frac{1}{4g}\left(2N_{RR}+N_{WR}\right)-\frac{2g-2}{4g}\he'(\Sigma^{*})\\
 & \leq-\frac{1}{4g}\left(2N_{RR}+N_{WR}\right)\stackrel{\eqref{eq:arcs-to-R-length}}{=}-\frac{4g(k+\ell)}{4g}=-(k+\ell).
\end{align*}
This completes the proof of Proposition \ref{prop:chi-bound}. $\square$

\section{Proof of main theorem\label{sec:Proof-of-main}}

\subsection{Proof of Theorem \ref{thm:main-convergence}\label{subsec:Proof-of-Theorem}}
\begin{proof}[Proof of Theorem \ref{thm:main-convergence}]
Assume $\gamma\in[\Gamma_{g},\Gamma_{g}]$ is not the identity and
that $w\in[\F_{2g},\F_{2g}]$ is a shortest element representing the
conjugacy class of $\gamma$, hence also not the identity. By Corollary
\ref{cor:starting-point} we have
\[
\E_{g,n}[\tr_{\gamma}]=\zeta(2g-2;n)^{-1}\sum_{\substack{\text{\text{(\ensuremath{\mu,\nu)\in}\ensuremath{\tilde{\Omega}}}}}
}D_{\mu,\nu}(n)\mathcal{J}_{n}(w,\nu,\mu)+O_{w,g}\left(\frac{1}{n}\right),
\]
where $\tilde{\Omega}$ is a finite collection of pairs of Young diagrams.
We know \linebreak{}
$\lim_{n\to\infty}\zeta(2g-2;n)=1$ from (\ref{eq:witten-limit})
and for each fixed $(\mu,\nu)$, \linebreak{}
$D_{\mu,\nu}(n)\J_{n}(w,\nu,\mu)=D_{\nu,\mu}(n)\J_{n}(w,\nu,\mu)=O_{w,\mu,\nu}(1)$
by Theorem \ref{thm:main-single-small-dim}. Hence $\E_{g,n}[\tr_{\gamma}]=O_{\gamma}(1)$
as $n\to\infty$ as required.
\end{proof}
\bibliographystyle{gtart}

\end{document}